\newcommand{\be}{\begin{equation}}
\newcommand{\ee}{\end{equation}}
\newcommand{\ba}{\begin{eqnarray}}
\newcommand{\ea}{\end{eqnarray}}
\newcommand{\baa}{\begin{eqnarray*}}
\newcommand{\eaa}{\end{eqnarray*}}
\newcommand{\bb}{}
\newcounter{my}
\newcommand{\he}%
   {\stepcounter{equation}\setcounter{my}%
   {\value{equation}}\setcounter{equation}0%
   }%
\newcommand{\she}%
   {\setcounter{equation}{\value{my}}%
    }%
\newtheorem{pr}{Proposition}
\newtheorem{cor}{Corollary}
\newtheorem{theorem}{Theorem}[section]
\newtheorem{lemma}[theorem]{Lemma}
\theoremstyle{definition}
\newtheorem{remark}[theorem]{Remark}
\numberwithin{equation}{section}
\newcommand{\hg}[2]{\,\mbox{}_{#1}F_{ #2}\!}
\newcommand{\argu}[3]{\left(\begin{array}{c} #1\\#2\end{array} ; #3\right)}
\DeclareRobustCommand{\erase}{\bgroup\markoverwith{\textcolor{red}{\rule[.5ex]{2pt}{0.8pt}}}\ULon}
\title[Algebraic interpretation of the two-variable Jacobi polynomials ]
  {Algebraic interpretation of the two-variable Jacobi polynomials on the triangle: the pentagonal way}
\author[Crampé, Labriet, Morey, Tsujimoto, Vinet, Zhedanov]
  {Nicolas Crampé, Quentin Labriet, Lucia Morey, Satoshi Tsujimoto, Luc Vinet and Alexei Zhedanov}
\address{CNRS - Université de Montréal CRM-CNRS, P.O. Box 6128, Centre-ville Station, Montr\'eal (Qu\'ebec), H3C 3J7}
\address{Centre de recherches math\'ematiques, Universit\'e de Montr\'eal, P.O. Box 6128, Centre-ville Station, Montr\'eal (Qu\'ebec), H3C 3J7}
\address{Centre de recherches math\'ematiques, Universit\'e de Montr\'eal, P.O. Box 6128, Centre-ville Station, Montr\'eal (Qu\'ebec), H3C 3J7}
\address{Graduate School of Informatics, Kyoto University,
Yoshida-Honmachi, Kyoto, Japan 606-8501}
\address{IVADO and Centre de recherches math\'ematiques, Universit\'e de Montr\'eal, P.O. Box 6128, Centre-ville Station, Montr\'eal (Qu\'ebec), H3C 3J7}
\address{Department of Mathematics, School of Information, Renmin University of China, Beijing 100872,CHINA}
\begin{document}

\begin{abstract}
The rank two Jacobi algebra $\mathcal{J}_2$ is used to provide an interpretation of the two-variable Jacobi polynomials $J_{n,k}^{(a,b,c)}(x,y)$ on the triangle, as overlaps between two representation bases. The subalgebra structure of $\mathcal{J}_2$ depicted via a pentagonal graph is exploited to find the explicit expression of the two-variable functions in terms of univariate Jacobi polynomials. It is also seen to provide an explanation for the fact that the expansion on the basis $J_{n,k}^{(a,b,c)}(x,y)$ of the polynomials obtained from the latter by permuting the variables $x,y, z=1-x-y$ and the parameters $(a,b,c)$ is given in terms of Racah polynomials. The underlying order-three symmetry is discussed.
\end{abstract}

\maketitle

\section{Introduction}
The two-variable Jacobi polynomials on the triangle \cite{dunkl2014orthogonal} have been initially introduced by Proriol \cite{proriol1957famille} and brought to the fore in particular by Koornwinder in his studies of two-variable orthogonal polynomials \cite{koornwinder1975two}. They are of total degree $n+k$ in the variables $x$ and $y$ restricted to $0\le x \le 1-y \le1$ and read as follows:
\begin{align}
 &J_{n,k}^{(a,b,c)}(x,y)=J_{n-k}^{(a,b+c+2k+1)}\bigl(x\bigr) \ (1-x)^k \ J_k^{(b,c)}\left(\frac{y}{1-x}\right), \quad \mbox{}\quad n \ge k \ge 0, \label{defJac2}
\end{align}
in terms of the univariate Jacobi polynomial $J_n^{(a,b)}(x)$ on the interval $[0,1]$ given by
\begin{align}
  &  J_n^{(a,b)}(x) = \dfrac{(a+1)_n}{n!}\ \hg{2}{1}\argu{-n,n+a+b+1}{a+1}{x}. \label{defJac1}
\end{align}
The univariate polynomials $J_n^{(a,b)}(x)$ obey the following orthogonality relation on the interval $[0,1]$:
\begin{align}
&   \int_{0}^1 J_n^{(a,b)}(x) J_{m}^{(a,b)}(x)\ x^{a} (1-x)^{b}dx
     =  N_n^{(a,b)}\delta_{nm}, \quad a,b>-1,\label{orth1var}
\end{align}
where
\begin{equation}
    N_n^{(a,b)}=\frac{\Gamma(n+a+1)\Gamma(n+b+1)}{(2n +a+b+1)\,n!\, \Gamma(n+a+b+1) }. \label{norm1}
\end{equation}
In view of \eqref{orth1var}, the two-variable polynomials $J_{n,k}^{(a,b,c)}(x,y)$ are found to be themselves orthogonal for $a,b,c > -1$ as per:
\begin{align}
 \int_{0\le x\le  1-y \le 1} J_{n,k}^{(a,b,c)}(x,y) J_{n',k'}^{(a,b,c)}(x,y)\ x^a y^b (1-x-y)^c dx dy = N_{n-k}^{(a,b+c+2k+1)} N_k^{(b,c)} \delta_{n,n'} \delta_{k,k'}. 
\end{align}
Apart from their intrinsic interest, the two-variable Jacobi polynomials have found various applications. When the parameters are zero, they have been referred to as Dubiner polynomials \cite{dubiner1991spectral} and have seen much use in finite elements methods applied to fluid dynamics \cite{karniadakis2005spectral}. They have also appeared in the classification by Krall and Sheffer \cite{krall1967orthogonal} and by Engelis \cite{engelis1974some} as well, of the bivariate polynomials obeying a second order partial differential equation; this classification was subsequently connected to that of superintegrable models \cite{vinet2003two} in two dimensions and in fact, the two-variable Jacobi polynomials arise as the wave functions of the generic superintegrable system on the two-sphere \cite{iliev2018symmetry}. 

It is appreciated that the univariate families of hypergeometric polynomials of the Askey scheme admit representation theoretic interpretations based on the algebras that encode their bispectral properties \cite{zhedanov1991hidden}. In the case of the Racah polynomials, this is explained in details in \cite{genest2014superintegrability} or in the review \cite{genest2014racah}. The approach has proved quite fruitful with deep connections to algebraic combinatorics \cite{terwilliger2001two} in particular. 

Higher rank generalizations that apply naturally to multivariate polynomials of discrete variables have been defined by extending the tensorial underpinning of the rank one case. The Racah algebra of arbitrary rank has been introduced in this fashion \cite{de2017higher} and the representations in the rank two case have been worked out with minutia in \cite{crampe2023representations}. While some algebraic results bearing on continuous polynomials (and in particular the bivariate Jacobi polynomials) could be obtained through limits (see for instance, \cite{geronimo2010bispectrality} and \cite{dunkl1984orthogonal} in connection with \cite{dunkl1981difference}), there is still much to understand on that topic.

It is with this motivation, that some of us recently undertook to propose an explicit definition of the rank two Jacobi algebra $\mathcal{J}_2$ \footnote{For historical reasons it has been customary in the literature to denote the rank $n$ algebras by the number $n+2$; we will not follow this convention here and simply label these algebras by $n$.} through a model derived directly from the bispectral properties of the two-variable Jacobi polynomials \cite{crampe2025two}. The goal here is to proceed in reverse and to show how the bivariate Jacobi polynomials can be characterized from the knowledge of the algebra and its representations. Note that the relations that define $\mathcal{J}_2$ have been relegated to appendix \ref{APP A} to ensure that the paper flows smoothly. 

Here is the outline. 
The rank one Jacobi and Racah algebras are reviewed in Section \ref{sec:rank1} with a reminder of how the eponymous polynomials appear through their representations. This knowledge will play a central role in the remainder of the work. It is included for completeness and could be skipped by a reader familiar with this algebraic interpretation of univariate orthogonal polynomials. In Section \ref{sec:2Jacobi} we focus on the identification and description of various rank one subalgebras of $\mathcal{J}_2$ that are of Jacobi and Racah types. These will form the scaffold of the algebraic description of the bivariate Jacobi polynomials and it will prove enlightening to organize these subalgebras in a pentagonal graph. The representation-theoretic interpretation of the two-variable polynomials $J_{n,k}^{(a,b,c)}(x,y)$ will be found in Section \ref{sec:alge}. The various univariate Jacobi polynomials that arise in the overlaps of vectors associated to different representations bases of each of these rank one algebras will be the building blocks of the construction.  The pentagonal subalgebra structure will naturally lead to another set of bivariate Jacobi polynomials related to $J_{n,k}^{(a,b,c)}(x,y)$ by the simultaneous exchanges $a \leftrightarrow c$ and $x \leftrightarrow 1-x-y$. That the Racah polynomials provide the expansion coefficients of the elements of one set in terms of those of the other will be seen in Section \ref{sec:over} to have a natural explanation in the proposed framework. A third family of two-variable Jacobi polynomials related to the standard ones by the simultaneous permutations $x \leftrightarrow y$ and $a \leftrightarrow b$ will be discussed in Section \ref{sec:symm} so as to bring forward the underlying order three symmetry of the description of these functions defined on the triangle. Concluding remarks will be offered in Section \ref{sec:conc}. As already mentioned, the defining relations will be found in Appendix \ref{APP A} while the relevant representations of $\mathcal{J}_2$ are given in Appendix \ref{sec:rep}.

\section{Some rank one algebras and their connections to the corresponding orthogonal polynomials \label{sec:rank1}}

The algebraic interpretation of the two-variable Jacobi polynomials will be based on the rank two-Jacobi algebra $\mathcal{J}_2$ and will rely on the connections between certain rank one subalgebras of $\mathcal{J}_2$ and univariate polynomials. These rank one structures are the Jacobi and Racah algebras. They will be introduced and the representation-theoretic  description of orthogonal polynomials they entail will be reviewed.

\subsection{The Jacobi algebra}

\subsubsection{Definition}

The Jacobi algebra $\mathcal{J}_1$ of rank one \cite{genest2016tridiagonalization} is generated by the elements $K_1$ and $K_2$ that are subjected to the following relations:
\begin{subequations}\label{rank1}
\begin{align} 
  [[K_1,K_2],K_1]=&\;2\{K_1,K_2\}-2 K_1-(\alpha+\beta)(\alpha+\beta+2)K_2+(\alpha+\beta)(\alpha+1),\\
  [K_2,[K_1,K_2]]=&\;2K_2^2-2K_2,
\end{align}    
\end{subequations}
with $[A,B]=AB-BA$, $\{A,B\}=AB+BA$ and $\alpha,\beta$ some parameters. We may remark that this algebra is isomorphic to the rank one Hahn algebra (see for example \cite{frappat2019higgs}).

\subsubsection{Embedding and representations}

It will prove useful to identify the embedding of $\mathfrak{J}_1$ in the Lie algebra $\mathfrak{su}(1,1)$ whose generators $J_0, J_{\pm}$ obey the commutation relations
\begin{equation}
    [J_0, J_{\pm}] = \pm J_{\pm}, \qquad [J_+,J_-]=-2J_0.
\end{equation}
Consider the discrete series bounded from below where the generators act as follows on the basis of orthonormal vectors $|\beta, k\rangle, k=0,1,\dots$:
\begin{align}
    J_0|\beta,k\rangle &= (k+\frac{1}{2}(\beta + 1))|\beta,k\rangle,\\
     J_+|\beta,k\rangle &= a_k\;|\beta,k+1\rangle,\\
     J_-|\beta,k\rangle &= \frac{k(k+\beta)}{a_{k-1}}\;|\beta,k-1\rangle.
\end{align}
(We are not distinguishing in our notation abstract generators from their representations.) In this representation, the Casimir element  $C=J_0^2-J_+J_--J_0$ of $\mathfrak{su}(1,1)$ takes the particular value $C=\tau (\tau -1)$ with $\tau = \frac{1}{2}(\beta +1)$. 
The coefficients $a_k$ depend a priori on the normalization of the vectors $|\beta, k\rangle$. The standard unitary representation is obtained with $a_k=\sqrt{(k+1)(k+\beta +1)}$ but it will be seen that the quantities of interest do not depend on these parameters $a_k$ that we shall hence not bother to specify.

It is then readily checked from the defining relations of $\mathfrak{su}(1,1)$ that the identification
\begin{equation}
    K_1=-J_0^2-J_--\alpha J_0 +\frac{1}{4}(2\alpha + \beta +1)(\beta+1), \qquad K_2=J_++1, \label{id}
\end{equation}
provides an embedding of $\mathfrak{J}_1$ into the enveloping algebra of $\mathfrak{su}(1,1)$.
As a result of the identification \eqref{id}, one obtains the following two-diagonal representation of the Jacobi algebra $\mathfrak{J}_1$:
\begin{align}
    K_1\;|\beta,k\rangle&=-k(k+\alpha+\beta+1)\;|\beta,k\rangle - \frac{k(k+\beta)}{a_{k-1}}\;|\beta,k-1\rangle,\label{k1}\\
    K_2\;|\beta,k\rangle&=|\beta,k\rangle + a_k \;|\beta,k+1\rangle.\label{k2}
\end{align}
This corresponds to one of the classes of $\mathfrak{J}_1$ representations given in \cite{granovskii1992mutual}.

\subsubsection{Eigenvectors}

The eigenvectors of the generators of the Jacobi algebra over this module are readily obtained. 
\begin{enumerate}[leftmargin=*]
\item Eigenvectors $|\mu_n^{(\alpha,\beta)}\rangle$ of $K_1$.\\
    Consider the equation $K_1 \;|\mu_n^{(\alpha,\beta)}\rangle = \mu_n^{(\alpha,\beta)}|\mu_n^{(\alpha,\beta)}\rangle$ with the eigenvalues, $n=0,1,\dots$:
    \begin{equation}
\mu_n^{(\alpha,\beta)} = -n(n+\alpha +\beta +1).
    \end{equation}
Writing $|\mu_n^{(\alpha,\beta)}\rangle = \sum_{k=0}^\infty A_{n,k} \;|\beta,k\rangle$ and enforcing the eigenvalue equation yields the following recurrence relation for the coefficients $A_{n,k}$:
\begin{equation}
-n(n+\alpha + \beta +1) A_{n,k} = -k(k + \alpha +\beta +1) A_{n,k} - \frac{(k+1)(k+\beta +1)}{a_{k}} A_{n,k+1},
\end{equation}
which is solved to give
\begin{equation}
    |\mu_n^{(\alpha,\beta)}\rangle = A_{n,0} \sum_{k=0}^\infty a_0a_1\dots a_{k-1} (-1)^k \frac{(-n)_k}{k!} \frac{(n+\alpha +\beta +1)_k}{(\beta +1)_k} \;|\beta, k\rangle. \label{eigvecK1}
\end{equation}
The initial value $A_{n,0}$ and hence the norm of $|\mu_n^{(\alpha,\beta)}\rangle$ are left unspecified for the moment.

\item Eigenvectors $|x^*\rangle$ of $K_2^{\top}$.\\
Denote by $K_2^{\top}$ the transpose of $K_2$. Focus now on the eigenvalue equation $K_2^{\top}\;|x^*\rangle=x\;|x^*\rangle$ and let $|x^*\rangle = \sum_{\ell=0}^\infty B^*_{\ell}(x) \;|\beta, \ell \rangle$. The eigenvalue equation translates into the recurrence relation $(x-1)B^*_{\ell}(x)=a_{\ell}B^*_{\ell +1}(x)$ from where we find that $|x^*\rangle$ is given by:
\begin{equation}
    |x^*\rangle=B^*_0(x)\sum_{\ell =0}^\infty \frac{(x-1)^{\ell}}{a_0\dots a_{\ell -1}}\; |\beta, \ell \rangle.\label{eigvecK2T}
\end{equation}
Here also we do not fix the initial value $B^*_0(x)$ and the norm of $|x^*\rangle$. 
\end{enumerate}

\subsubsection{The Jacobi polynomials $J_n^{(a,b)}(x)$ as overlaps between eigenvectors}

We shall understand $\langle x |$ as $| x^*\rangle ^{\top}$. From the expressions \eqref{eigvecK1} and \eqref{eigvecK2T} of the eigenvectors of $K_1$ and $K_2^{\top}$, the overlaps $\langle x|\mu_n^{(\alpha,\beta)}\rangle$ are readily found to be:

\begin{equation}
    \langle x|\mu_n^{(\alpha,\beta)}\rangle = A_{n,0} B_0^*(x)\sum_{k,\ell = 0}^\infty \frac{a_0 \dots a_{k-1}}{a_0\dots a_{\ell -1}} (x-1)^{\ell} (-1)^k \frac{(-n)_k}{k!} \frac{(n+\alpha +\beta +1)_k}{(\beta +1)_k} \langle \beta, \ell|\beta, k\rangle.\label{over}
\end{equation}

\noindent Recalling that $\langle \beta, \ell|\beta, k\rangle = \delta _{\ell,k}$, equation \eqref{over} simplifies to:
\begin{align}
     \langle x|\mu_n^{(\alpha,\beta)}\rangle &= A_{n,0} B_0^*(x)\sum_{k=0}^\infty \frac{(-n)_k (n+\alpha+\beta+1)_k}{(\beta + 1)_k} \frac{(1-x)^k}{k!} \nonumber\\
     &= A_{n,0} B_0^*(x) \hg{2}{1}\argu{-n,n+\alpha +\beta+1}{\beta+1}{1-x}.
\end{align}
 With the help of the following Kummer relation \cite{bateman1953higherI}:
 \begin{equation}
     \hg{2}{1}\argu{-n, b}{c}{x} = \frac{(c-b)_n}{(c)_n}\hg{2}{1}\argu{-n,b}{-n+b+1-c}{1-x}, \label{Kummer}
\end{equation}
one arrives at
\begin{equation}
    \langle x|\mu_n^{(\alpha,\beta)}\rangle = (-1)^n \frac{n!}{(\beta +1)_n} A_{n,0} B_0^*(x) J_n^{(\alpha,\beta)}(x).\label{overJac}
\end{equation}
Hence, up to normalizations, the overlaps between the eigenvectors of the generators $K_1$ and $K_2$ of the Jacobi algebra of rank one \eqref{rank1} with parameters $\alpha$ and $\beta$, are given by the Jacobi polynomials $J_n^{(\alpha,\beta)}(x)$ with $-n(n+\alpha + \beta + 1)$ the eigenvalues of $K_1$ and the variable $x$ corresponding to those of $K_2$.

\subsubsection{Differential and difference realizations of $\mathfrak{J}_1$}

A differential and difference realization of the Jacobi algebra $\mathfrak{J}_1$ which encodes the bispectral properties of these polynomials are provided by the differential equation and recurrence relation that these polynomials obey. They read:
\begin{enumerate}[leftmargin=*]
    \item Differential equation
\begin{align}\label{unide}
  &H_x^{(a,b)}[J_{n}^{(a,b)}(x)]=-n\,(n+a+b+1)J_{n}^{(a,b)}(x),
\end{align}
where
\begin{align}
H_x^{(a,b)} = x(1-x)\partial_x^2 + (a+1 - (a+b+2)x)\partial_x.
\end{align}

\item Recurrence relation
\begin{align}
(1-2x) &J_n^{(a,b)}(x)
= \Hat{H}^{(a,b)} J_n^{(a,b)}(x),
\end{align}
where
\begin{align}
\Hat{H}^{(a,b)}&=
\frac{2(n+1)(n+a+b+1)}{(2n+a+b+1)(2n+a+b+2)}S_+\nonumber  \\
&-\frac{(a-b)(a+b)}{(2n+a+b)(2n+a+b+2)}I+\frac{2(n+a)(n+b)}{(2n+a+b)(2n+a+b+1)} S_-,
\label{RR:univariate}
\end{align}
and the shift operators $S_{\pm}$ on the variable $n$ act according to $S_{\pm}f_n=f_{n\pm 1}$.
\end{enumerate}
\medskip
Take
\begin{equation}
   \ K_1=\rho_n^{(a,b)}(x)H_x^{(a,b)}[\rho_n^{(a,b)}(x)]^{-1},\qquad K_2=x,
\end{equation}
with
\begin{equation}
    \rho_n^{(a,b)}(x)=\left[\frac{x^{a}(1-x)^{b}}{N_n^{(a,b)}}\right]^\frac{1}{2}.
\end{equation}
It is directly checked that this identification provides a differential realization of the commutation relations \eqref{rank1} that define the Jacobi algebra of rank one with $\alpha =a$ and $\beta =b$. In this model, the normalized eigenbases are given by the following functions:
\begin{equation}
    |\mu_n^{(\alpha,\beta)}\rangle \; \longrightarrow \; \rho_n^{(\alpha,\beta)}(x)J_n^{(\alpha,\beta)}(x), \qquad
    |x\rangle \; \longrightarrow \; \delta_x,
\end{equation}
with $\delta_x$ Dirac's delta function: $\delta _x(y) = \delta (x-y)$.

\begin{remark}
    This result fixes the range of the eigenvalue $x$ to be $[0,1]$ and $\alpha,\beta>-1$ to ensure the orthogonality:
    \begin{equation}
 \langle  \mu_m^{(\alpha,\beta)} |\mu_n^{(\alpha,\beta)}\rangle=\delta_{n,m} \; \longrightarrow \; \int_0^1 \rho_n^{(\alpha,\beta)}(x)\rho_m^{(\alpha,\beta)}(x)J_n^{(\alpha,\beta)}(x)J_m^{(\alpha,\beta)}(x) dx=\delta_{n,m},
\end{equation}
where we have applied equation \eqref{orth1var}.
\end{remark}


\subsection{The Racah algebra}

\subsubsection{Definition}

The rank one Racah algebra $\mathfrak{R}_1$ has two generators, again denoted by $K_1$ and $K_2$, that are subjected to the relations \cite{genest2014superintegrability}, \cite{genest2014racah}:
\begin{subequations}\label{Ract}
  \begin{align}
    [[K_1,K_2],K_1]=\; &2K_1^2 + 2\{K_1,K_2\}+ \xi K_1+ \eta_1 K_2+ \zeta_1, \label{Rac1}\\
    [K_2,[K_1,K_2]]=\; &2K_2^2 + 2\{K_1,K_2\} + \xi K_2+ \eta_2 K_1+ \zeta_2,\label{Rac2}
\end{align}  
\end{subequations}
where $\xi, \eta_1, \eta_2, \zeta_1, \zeta_2$ are parameters.
(Note that the parameters $\eta_1$ and $\eta_2$ can be eliminated by adding appropriate constants to the generators; this is often done to standardize the presentation but it will be more convenient to not do so here.)
Its representations and connections to the orthogonal polynomials from which it takes its name can be carried out in a fashion similar to the treatment given above of the rank one Jacobi algebra. We shall not review this here as it would be lengthy and can be found in the literature \cite{genest2014superintegrability}, \cite{genest2014racah}, \cite{huang2020finite}. As just done in the Jacobi case, we shall however indicate how $\mathfrak{R}_1$ is realized by the bispectral operators of the Racah polynomials. This will show that the overlaps between certain representation bases are given by these polynomials.

\subsubsection{The Racah polynomials and their orthogonality relations}

The Racah polynomials denoted by $R_n(\lambda(\ell); \alpha, \beta, \gamma, \delta) $ are explicitly defined as follows \cite{koekoek2010hypergeometric}:
\begin{equation}
R_n(\lambda^{(\gamma,\delta)}(\ell); \alpha, \beta, \gamma, \delta) 
= {}_4F_3\left( \begin{array}{c}
-n,\ n+\alpha+\beta+1,\ -\ell,\ \ell+\gamma+\delta+1 \\
\alpha+1,\ \beta+\delta+1,\ \gamma+1
\end{array} ; 1 \right), \quad n = 0, 1, 2, \ldots, N, \label{defRac}
\end{equation}
where
\begin{equation}
  \lambda^{(\gamma,\delta)}(\ell) = \ell(\ell + \gamma + \delta + 1),  
\end{equation}
\begin{equation}
   \alpha + 1 = -N \quad \text{or} \quad \beta + \delta + 1 = -N \quad \text{or} \quad \gamma + 1 = -N, 
\end{equation}
with \( N \) a nonnegative integer.
In this and the following subsections, $R_n(\lambda(\ell))$ (resp. $\lambda(\ell)$) stands for $R_n(\lambda(\ell); \alpha, \beta, \gamma, \delta)$ (resp. $\lambda^{(\gamma,\delta)}(\ell)$).

They satisfy the following orthogonality relations
\begin{equation}
\sum_{\ell=0}^N
w^{(\alpha, \beta, \gamma, \delta)}(\ell)\;
R_m(\lambda(\ell)) \, R_n(\lambda(\ell))  = M_n^{(\alpha, \beta, \gamma, \delta)}
\delta_{mn},
\end{equation}
where\footnote{Note that the identity $\frac{(\gamma + \delta +1)_\ell \;\left(\frac{\gamma + \delta +3}{2}\right)_\ell}{\left(\frac{\gamma + \delta +1}{2}\right)_\ell} = \frac{(\gamma + \delta +2)_{2\ell}}{(\gamma+\delta +\ell+1)_\ell}$ was used to transform the formula for the weight $ w^{(\alpha, \beta, \gamma, \delta)}(\ell)$ given in \cite{koekoek2010hypergeometric} into \eqref{weight}.}
\begin{equation}
    w^{(\alpha, \beta, \gamma, \delta)}(\ell)=\frac{
(\alpha +1)_\ell\, (\beta +\delta +1)_\ell\, (\gamma +1)_\ell\, (\gamma +\delta +2)_{2\ell}
}{
(-\alpha +\gamma +\delta +1)_\ell\, (-\beta +\gamma +1)_\ell\, (\gamma +\delta +\ell +1)_\ell\, (\delta +1)_\ell\, \ell! \label{weight}
},
\end{equation}

\begin{equation}
    M_n^{(\alpha, \beta, \gamma, \delta)} = K\frac{
 (n+\alpha +\beta +1)_n\, (\alpha +\beta -\gamma +1)_n\, (\alpha -\delta +1)_n\, (\beta +1)_n\, n!
}{
(\alpha +\beta +2)_{2n}\, (\alpha +1)_n\, (\beta +\delta +1)_n\, (\gamma +1)_n
},\label{normal}
\end{equation} 
and
\begin{equation}
K = \begin{cases}
\displaystyle
\frac{(-\beta)_N\, (\gamma + \delta + 2)_N}{(-\beta + \gamma + 1)_N\, (\delta + 1)_N}
& \text{if } \alpha + 1 = -N, \\[1em]
\displaystyle
\frac{(-\alpha + \delta)_N\, (\gamma + \delta + 2)_N}{(-\alpha + \gamma + \delta + 1)_N\, (\delta + 1)_N}
& \text{if } \beta + \delta + 1 = -N, \\[1em]
\displaystyle
\frac{(\alpha + \beta + 2)_N\, (-\delta)_N}{(\alpha - \delta + 1)_N\, (\beta + 1)_N}
& \text{if } \gamma + 1 = -N.
\end{cases}
\label{K}
\end{equation}

\noindent Observe that 
\begin{equation}
     w^{(\gamma, \delta, \alpha, \beta)}(n) = K \left[  M_n^{(\alpha, \beta, \gamma, \delta)}\right]^{-1}.
\end{equation}
Clearly the orthonormalized functions
\begin{align}
    S_n^{(\alpha, \beta, \gamma, \delta)}(\ell)=&\sqrt{\frac{ w^{(\alpha, \beta, \gamma, \delta)}(\ell)}{  M_n^{(\alpha, \beta, \gamma, \delta)}}}R_n(\lambda(\ell)) \nonumber \\
    & =\sqrt{\frac{w^{(\alpha, \beta, \gamma, \delta)}(\ell)\;w^{(\gamma, \delta, \alpha, \beta)}(n)}{K}} \;R_n(\lambda(\ell)),\label{orthfn}
\end{align}
satisfy
\begin{equation}
    \sum_{\ell=0}^N \;  S_m^{(\alpha, \beta, \gamma, \delta)}(\ell)\; S_n^{(\alpha, \beta, \gamma, \delta)}(\ell) \;= \;\delta_{m,n}. \label{orthS}
\end{equation}
Note that the Racah polynomials $R_n(\lambda(\ell))$ as well as the functions $S_n^{(\alpha, \beta, \gamma, \delta)}(\ell)$ are invariant under the exchanges 
\begin{equation}
    n \leftrightarrow \ell, \qquad \alpha \leftrightarrow \gamma, \qquad \beta \leftrightarrow \delta,
\end{equation}
and are in this sense self-dual under the exchanges of the variable and the degree.

\subsubsection{Bispectrality properties of the Racah polynomials}

We here record the two eigenvalue equations that the Racah polynomials verify \cite{koekoek2010hypergeometric}. 
\begin{enumerate}[leftmargin=*]    \item Three-term recurrence relation
    \begin{equation}
\lambda(\ell) R_n(\lambda(\ell)) = A_n R_{n+1}(\lambda(\ell)) - (A_n + C_n) R_n(\lambda(\ell)) + C_n R_{n-1}(\lambda(\ell)),
\end{equation}
where $A_n$ and $C_n$ are given by
\begin{align}
&   A_n = \frac{(n + \alpha + 1)(n + \alpha + \beta + 1)(n + \gamma + 1)(n + \beta + \delta + 1)}
{(2n + \alpha + \beta + 1)(2n + \alpha + \beta + 2)},\nonumber\\ 
&C_n = \frac{n(n + \beta)(n + \alpha - \delta)(n + \alpha + \beta - \gamma)}
{(2n + \alpha + \beta)(2n + \alpha + \beta + 2)}.
\end{align}

\item Difference equation
\begin{equation}
\left[B(\ell) \mathcal{T}^+ - (B(\ell) + D(\ell)) + D(\ell) \mathcal{T}^-\right] R_n(\lambda(\ell)) = n(n + \alpha + \beta + 1) R_n(\lambda(\ell)),
\end{equation}
with $\mathcal{T}^{\pm}f(\ell) = f(\ell\pm 1)$ and where $B(\ell)$ and $D(\ell)$ are
\begin{align}
B(\ell) &= \frac{(\ell + \alpha + 1)(\ell + \beta + \delta + 1)(\ell + \gamma + 1)(\ell + \gamma + \delta + 1)}{(2\ell + \gamma + \delta + 1)(2\ell + \gamma + \delta + 2)}, \nonumber \\
D(\ell) &= \frac{\ell(\ell - \alpha + \gamma + \delta)(\ell - \beta + \gamma)(\ell + \delta)}{(2\ell + \gamma + \delta)(2\ell + \gamma + \delta + 1)}. 
\end{align}

\end{enumerate}

\subsubsection{A realization of $\mathfrak{R}_1$ in terms of difference operators \label{subs:relR}}

The bispectral operators provide a representation of the Racah algebra. Take
\begin{align}
K_1 &= -B(\ell) T^+ - D(\ell) T^- + (B(\ell) + D(\ell)), \nonumber \\
K_2 &= -\ell(\ell + \gamma + \delta + 1). \nonumber 
\end{align}
A direct computation shows that these operators $K_1$, $K_2$ verify the defining relations \eqref{Ract} of $\mathfrak{R}_1$ with structure parameters
\begin{align}
 \xi \;&= \beta(\delta - \gamma - 2) - \alpha(2\beta + \gamma + \delta + 2) - 2(\gamma + 1)(\delta + 1), \label{xiRac}\\
\eta_1 &= -(\alpha + \beta)(2 + \alpha + \beta),\label{eta1Rac}\\
\eta_2 &= -(\gamma + \delta)(2 + \gamma + \delta), \label{eta2Rac}\\
\zeta_1 &= (\alpha + 1)(\alpha + \beta)(\beta + \delta + 1)(\gamma + 1), \label{zeta1Rac}\\
\zeta_2 &= (\alpha + 1)(\beta + \delta + 1)(\gamma + 1)(\gamma + \delta)\label{zeta2Rac}.
\end{align}

This realization of $\mathfrak{R}_1$ on functions of $\ell$ provides a faithful model for the properties of representation bases of this algebra. On the one hand, there is a basis $\{|n;\alpha, \beta, \gamma, \delta \rangle,\; \, n=0\,\dots,N\}$ given up to normalizations by the Racah polynomials $R_n(\lambda(k); \alpha, \beta, \gamma, \delta) $. The generator $K_1$ is diagonal in this basis while $K_2$ is tridiagonal. On the other hand, we have a second basis $\{|e_{\ell}\rangle,\; \ell = 0,\dots, N \}$ realized by $e_{\ell}=\delta_{\ell,k}$ in which $K_2$ is diagonal and $K_1$ tridiagonal. 

If the basis vectors are normalized, we thus infer that the overlaps of the eigenbases defined by these properties and hence forming a Leonard pair \cite{terwilliger2001two}, will read in terms of the orthonormalized functions \eqref{orthfn}:
\begin{align}
    \langle e_\ell \;|\;n; \alpha, \beta, \gamma, \delta \rangle &= \;S_n^{(\alpha, \beta, \gamma, \delta)}(\ell) = \;S_\ell^{(\gamma, \delta, \alpha, \beta)}(n)\nonumber \\
    &=\sqrt{\frac{w^{(\alpha, \beta, \gamma, \delta)}(\ell)}{M_n^{(\alpha, \beta, \gamma, \delta)}}}R_n(\lambda^{(\gamma,\delta)}(\ell); \alpha, \beta, \gamma, \delta) ,\label{ove_R}
\end{align} 
where $w^{(\alpha, \beta, \gamma, \delta)}(\ell)$ is given by \eqref{weight} and $M_n^{(\alpha, \beta, \gamma, \delta)}$ by \eqref{normal} and \eqref{K}. This fact is corroborated by the general representation theory \cite{genest2014superintegrability}, \cite{genest2014racah}. Summing up, the overlaps between the eigenvectors of $K_1$ with eigenvalues $-n(n+\alpha + \beta + 1)$ and those of $K_2$ with eigenvalues $-\ell (\ell + \gamma +\delta + 1)$ are given by \eqref{ove_R}. The selfduality of the Racah polynomials is reflected in the symmetry of the algebra relations  under the exchanges $K_1 \leftrightarrow K_2$ and of the indices $1$ and $2$ in the parameters $\eta_i \;\text{and}\: \;\zeta_i,\; i=1,2$.

\section{Structural elements of the rank two Jacobi algebra \label{sec:2Jacobi}}

The defining relations of the rank two Jacobi algebra $\mathfrak{J}_2$ upon which the interpretation of the two-variable Jacobi polynomials will be based are given in Appendix \ref{APP A}. We here focus on certain structural features of $\mathfrak{J}_2$. This algebra is generated by five elements namely, $L, L_1, L_3, X_1, X_3$. The centralizers of each of these generators form significant subalgebras of $\mathfrak{J}_2$ that will be at the heart of our study. 

From \eqref{zero}, it is seen that the centralizer of every element of the generating set of $\mathfrak{J}_2$ is two-generated. For each element, their generators are:
\begin{subequations}\label{sat}
\begin{align}
    L_1 \qquad :& \qquad \{L, X_1\}  \label{sa1} \\
    X_1 \qquad :& \qquad \{L_1, X_3\} \\
  X_3 \qquad :& \qquad \{X_1, L_3\} \\
   L_3 \qquad :& \qquad \{X_3,L\}\\
    L \qquad :& \qquad \{L_3,L_1\}.\label{sa5}
\end{align}    
\end{subequations}
\noindent We shall describe in turn each of these subalgebras.

\subsection{The centralizer $C_{L_1}(\mathfrak{J}_2)$ of $L_1$ \label{sub3.1}} 
As indicated in the list above, the generators that commute with $L_1$ are $L$ and $X_1$.
From \eqref{LX1L} and \eqref{LX1X1} in Appendix \ref{APP A}, we see that the commutation relations involving $L$ and $X_1$ are:
\begin{subequations}\label{LX1Ltt}
 \begin{align}
     [[L,X_1]\;,L\;]&=2\{X_1,L\}-2L+2L_1-(a+b+c+1)((a+b+c+3)X_1 -(a+1)I),\label{LX1Lt}\\
     [[L,X_1],X_1]&=-2X_1^2+2X_1.\label{LX1X1t}  
 \end{align}    
\end{subequations}

Knowing that $L_1$ is central since $[L,L_1]=0$ and $[X_1,L_1]=0$, this defines an algebra that we wish to identify. To that end write the central $L_1$ as
\begin{equation}
       L_1=-k (k+ b+c+1)I.
\end{equation}
Then, upon taking 
\begin{equation}
    K_1=L+k(k+a+b+c+2)I \qquad \text{and } \qquad K_2=X_1,
\end{equation}
we recognize that the commutation relations \eqref{LX1Ltt} are transformed into those given in \eqref{rank1} of $\mathfrak{J}_1$ with 
\begin{equation}
    \alpha=a \qquad \text{and} \qquad \beta= 2k+b+c+1.
\end{equation}
This establishes that $C_{L_1}(\mathfrak{J}_2)$ is a rank one Jacobi algebra centrally extended by the presence of $L_1$ via $k$ in the structure parameters.

\subsection{The centralizer $C_{X_1}(\mathfrak{J}_2)$ of $X_1$ \label{subs:X1}}
The generators that commute with $X_1$ are $L_1$ and $X_3$. The commutation relations \eqref{L1X3L1} and \eqref{L1X3X3} from Appendix \ref{APP A} that involve these operators are:
\begin{subequations}\label{L1X3L1tt}
\begin{align}   
 [[L_1,X_3],L_1]&=2\{X_3 ,L_1\}+\{X_1,L_1\}-2L_1  
 -(b+c) \left((b+c+2)X_3 -(c+1)(I-X_1) \right),\label{L1X3L1t}\\
 [[L_1,X_3],X_3]&=-2X_3^2+2(I-X_1)X_3.\label{L1X3X3t}
 \end{align}    
\end{subequations}

\noindent With $X_1$ central these relations define an algebra. Assume $X_1-I$ to be invertible and set
\begin{equation}
    K_1=L_1 \qquad \text{and} \qquad K_2 = \frac{X_3}{X_1 -I}\;+I \qquad \text{with} \qquad \alpha = b, \qquad \beta =c.\label{X1central}
\end{equation}
It is then immediate to see that relations \eqref{rank1} are transformed into 
\eqref{L1X3L1tt}. Hence $C_{X_1}(\mathfrak{J}_2)$ is found to be a rank one Jacobi algebra with $X_1$ as a central element under the identification of the generators $K_1$ and $K_2$ of $\mathfrak{J}_1$ given in \eqref{X1central}.

\medskip
The situation will be parallel with the centralizers of $L_3$ and of $X_3$.

\subsection{The centralizer $C_{L_3}(\mathfrak{J}_2)$ of $L_3$\label{subs:L3}}

The generators that commute with $L_3$ are $L$ and $X_3$. These operators are seen to form a centrally extended Jacobi algebra of rank one in a fashion similar to $L$ and $X_1$. The commutation relations \eqref{LX3L} and \eqref{LX3X3} from Appendix \ref{APP A} that involves only that pair are:
\begin{subequations}\label{LX3Ltt}
   \begin{align}
    [[L,X_3]\;,L\;]&=2\{ X_3,L\}-2L+2 L_3
    -(a+b+c+1)((a+b+c+3)X_3 - (c+1)I),\label{LX3Lt}\\
    [[L,X_3],X_3]&=-2X_3^2+2X_3,\label{LX3X3t}
\end{align} 
\end{subequations}
and with $L_3$ central they define an algebra. To identify it, write the central $L_3$ in the form
\begin{equation}
    L_3=-k (k+a+b+1)I.
\end{equation}
It is quite straightforward to see that the relations \eqref{LX3Ltt} between $L$ and $X_3$ are transformed into the relations \eqref{rank1} of $\mathfrak{J}_1$ with
\begin{equation}
    K_1=L+k (k+a+b+c+2)I \qquad \text{and} \qquad K_2=X_3, \label{idenCL3}
\end{equation}
and the parameters given by
\begin{equation}
    \alpha =c \qquad \text{and} \qquad \beta=a+b+2k+1. \label{parCL3}
\end{equation}
We thus conclude that $C_{L_3}(\mathfrak{J}_2)$ is a centrally extended rank one Jacobi algebra.

\subsection{The centralizer $C_{X_3}(\mathfrak{J}_2)$ of $X_3$ \label{subs:X3}}

The generators of this subalgebra, i.e., the operators that commute with $X_3$, are $L_3$ and $X_1$. Their mutual commutation relations are given by equations \eqref{L3X1L3} and \eqref{L3X1X1} of Appendix \ref{APP A} and read:
\begin{subequations}\label{L3X1L3tt}
    \begin{align}
    [[L_3,X_1],L_3]&=\{2X_1+X_3-I,L_3\} 
 -(a+b)\left((a+1)(X_1+X_3-I)+(b+1)X_1\right), \label{L3X1L3t}\\
    [[L_3,X_1],X_1]&=-2X_1(X_1+X_3-I). \label{L3X1X1t}
\end{align}
\end{subequations}
 \noindent Assume $X_3-I$ to be invertible, positing 
\begin{equation}
    K_1=L_3 \qquad\text{and}\qquad K_2=\frac{X_1}{X_3-I} +I, \label{JacsubL3X1}
\end{equation}
allows to rewrite \eqref{L3X1L3tt} precisely as the commutation relations \eqref{rank1} of $\mathfrak{J}_1$ with the parameters given by
\begin{equation}
    \alpha=b, \; \beta=a. \label{parCX3}
\end{equation}
As was the case in \eqref{X1central}, the division by a generator is allowed as it is central. The conclusion is that $C_{X_3}(\mathfrak{J}_2)$ is also a Jacobi algebra of rank one.

\medskip

All the centralizers examined up to this point have turned out to be isomorphic to (central extensions) of $\mathfrak{J}_1$. This will not be so for the fifth one.

\subsection{The centralizer $C_{L}(\mathfrak{J}_2)$ of $L$ \label{subs:L}}

Generated by the elements $L_1$ and $L_3$ that commute with $L$,  $C_{L}(\mathfrak{J}_2)$ is by definition the symmetry algebra of $L$. The commutation relations from Appendix \ref{APP A} that involve only $L_1$ and $L_3$ (apart from the central $L$) are \eqref{L1L3L1} and \eqref{L1L3L3}:
\begin{subequations}\label{L1L3L1tt}
\begin{align}
[[L_1,L_3],L_1]&=2\{L_1,L_3\}+2L_1^2-2L_1L +(b+c)(b+1)(L-L_1-L_3) \nonumber\\&
    - (b+c)(c+1)L_3-(b-c)(a+1)L_1, \label{L1L3L1t}\\
[[L_1,L_3],L_3]&=-2\{L_1,L_3\}-2L_3^2+2L_3L -(a+b)(b+1)(L-L_1-L_3) \nonumber\\&
    + (a+b)(a+1)L_1+(b-a)(c+1)L_3. \label{L1L3L3t}
 \end{align}    
\end{subequations}
\noindent Comparing the equations \eqref{L1L3L1tt} with the relations \eqref{Ract} of $\mathfrak{R}_1$, we observe under the identification $K_1 \rightarrow L_1$ and $K_2 \rightarrow L_3$ that the operators $L_1$ and $L_3$ realize together a central extension of the rank one Racah algebra, due to the presence of $L$ in the parameters, that are given by:
\begin{align}
    \xi=& \;-(b+c)(b+1)-(b-c)(a+1)-2L=-(a+b)(b+1)-(b-a)(c+1)-2L, \label{xiJac}\\
    \eta _1=& \;-(b+c)(b+c+2),\label{eta1Jac}\\
    \eta _2=& \;-(a+b)(a+b+2), \label{eta2Jac}\\
    \zeta _1=&\;(b+c)(b+1)L, \label{zeta1Jac}\\
    \zeta _2=&\;(a+b)(b+1)L \label{zeta2Jac}.  
\end{align}

We thus see that the centralizer $C_{L}(\mathfrak{J}_2)$ which is the symmetry algebra of $L$, is a central extension of the rank one Racah algebra.

\subsection{A pentagonal depiction}
Referring to the list \eqref{sat}, the first four centralizers are isomorphic to the (centrally extended) Jacobi algebra $\mathfrak{J}_1$ of rank one, while the last one is a central extension of the Racah algebra $\mathfrak{R}_1$ of rank one. It proves telling to organize with a pentagon the subalgebra structure of the rank two Jacobi algebra $\mathfrak{J}_2$ that we have found. 

A feature of the ordered list \eqref{sa1}-\eqref{sa5} is that the second generator of a given centralizer is central in the subsequent one and this property is cyclic. Start the sequence with the second element, $L_1$, from the centralizer of $L$ and associate in a clockwise fashion to each vertex of a pentagon, the pairs formed by the last and first generators of consecutive centralizers. This gives the graph of Figure \ref{fig:tr}. At each corner of the pentagon, there is a pair of commuting generators. The picture that emerges is that the elements at the boundaries of straight line edges generate a rank one Jacobi algebra while those at the boundaries of the dotted edge at the bottom generate a rank one Racah algebra. The commuting pairs attached to each node can be viewed as defining bases of joint eigenvectors for representation spaces of the Jacobi algebra of rank two. The transformation matrices between adjacent bases along the pentagon will be expressed in terms of the polynomials associated to the algebra assigned to the edge connecting the commuting elements/eigenbases. A synthetic underpinning of the algebraic interpretation that we shall explain next of the two-variable Jacobi polynomials through representations of $\mathfrak{J}_2$ is thus seen to lie in this pentagonal diagram.

\begin{figure}[htbp] \label{fig:1}
\begin{center}
\begin{minipage}[t]{.9\linewidth}
\begin{center}
\begin{tikzpicture}[scale=1.3]
   
    \def\R{2cm}
    \def\Rt{2.1cm}
    \def\Rtt{1.85cm}

    \foreach \i in {1,...,5} {
        \coordinate (P\i) at ({90 + (\i-1)*360/5}:\R);
    }
    \draw (P3)--(P2) -- (P1) -- (P5) -- (P4);
    \draw[dashed] (P3) -- (P4);


\node at ({90 + (+0.12)*360/5}:\Rt) {$X_3$};
\node at ({90 + (-0.12)*360/5}:\Rt) {$X_1$};
\node at ({90 + (-0.12+1)*360/5}:\Rt) {$L_1$};
\node at ({90 + (+0.12+1)*360/5}:\Rt) {$X_1$};
\node at ({90 + (-0.12-1)*360/5}:\Rt) {$X_3$};
\node at ({90 + (+0.12-1)*360/5}:\Rt) {$L_3$};
\node at ({90 + (-0.12-2)*360/5}:\Rt) {$L_3$};
\node at ({90 + (+0.12-2)*360/5}:\Rt) {$L$};
\node at ({90 + (-0.12+2)*360/5}:\Rt) {$L$};
\node at ({90 + (+0.12+2)*360/5}:\Rt) {$L_1$};

     \foreach \i in {1,...,5} {
        \draw[fill]  (P\i) circle (0.08); ;
     }

\end{tikzpicture}
\caption{Two generators around the same vertex commute; two elements at the boundaries of the same solid line generate a Jacobi algebra of rank one and $L_1$, $L_3$ at the boundaries of the dashed edge generate a Racah algebra of rank one.   \label{fig:tr}}

\end{center}
\end{minipage}
\end{center}
\end{figure}
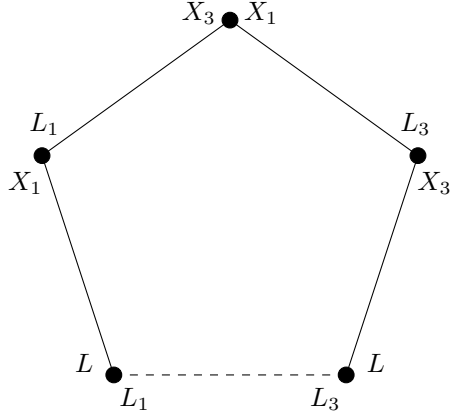

\section{An algebraic interpretation of the two-variable Jacobi polynomials on the triangle \label{sec:alge}}

We shall now proceed to explain how the two-variable Jacobi polynomials arise from the representation of $\mathfrak{J}_2$ given in Appendix \ref{sec:rep} where we observed that all generators are symmetrizable and their sets of eigenvectors therefore orthonormalizable. It is customary to define representation bases as joint eigenvectors of the elements of maximal abelian subalgebras. The centralizers discussed in the last section that are all made out of two commuting generators will hence play key roles from that standpoint in the examination of representations of the rank two Jacobi algebra and in fact, all the corresponding bases will be used. From looking at the pentagon picture, bases of particular interest are those corresponding to the top and bottom of the drawing.  
\begin{itemize}
\item As shown previously, the generators $X_1$ and $X_3$ are always associated to the generator $K_2$ of different rank one Jacobi subalgebras. Therefore, the eigenvectors associated to the top vertex are the vectors $|x,y\rangle$ which we take to satisfy the following eigenvalue equations:
\begin{equation}\label{u}
    X_1 \;|x,y\rangle = x\;|x,y\rangle, \qquad \text{and} \qquad X_3\; |x,y\rangle = (1-x-y) \; |x,y\rangle.
\end{equation}
The knowledge of the spectra of the corresponding operators in representations of the rank one Jacobi algebra specifies the possible values of $x$ and $y$.
The range of $x$ is $0\le x\le 1$ and, due to the identification \eqref{X1central},  $\frac{1-x-y}{x-1}+1$ is between 0 and 1 which leads to $ x \le 1-y \le1$.
\item There are two sets of eigenvectors corresponding to the bottom corners of the pentagon. The one associated to the left vertex will be formed by the vectors $|n,k;a,b,c\rangle$ which obey the two eigenvalue equations:
\begin{subequations}\label{blt}
    \begin{align} 
    &L \; |n,k;a,b,c\rangle = -n(n+a+b+c+2) \;|n,k;a,b,c\rangle, \label{bl} \\
    &L_1 \; |n,k;a,b,c\rangle = -k(k+b+c+1) \; |n,k;a,b,c\rangle. \label{bl1}
\end{align}
\end{subequations}
The choice for the eigenvalues comes from the fact that we know that $L$ and $L_1$ are part of Jacobi algebras of rank one whose generators $K_1$ have spectra of that type. From the one described in Subsection \ref{subs:X1}, one deduces that $k$ take values in $\mathbb{N}$ whereas the one in Subsection \ref{sub3.1} leads to eigenvalues of the following form, for $m\in \mathbb{N}$
\begin{align}
    -m(m+a+b+c+2+2k)-k(k+a+b+c+2)=-(m+k)(m+k+a+b+c+2)\,.
\end{align}
Therefore, one gets that $0\le k\le n=m+k$.
\medskip

\noindent The eigenvectors corresponding to the right corner will have for elements the vectors $|n,k;a,b,c\rangle ^\pi$ ($0\le k \le n$) that are solutions of the two eigenvalue equations:
\begin{subequations}\label{brt}
    \begin{align}
    &L \; |n,k;a,b,c\rangle ^\pi = -n(n+a+b+c+2) \;|n,k;a,b,c\rangle ^\pi, \label{br}\\
    &L_3 \; |n,k;a,b,c\rangle ^\pi = -k(k+a+b+1) \; |n,k;a,b,c\rangle ^\pi.\label{br1}
\end{align}
\end{subequations}
\end{itemize}

\noindent All these eigenvectors will be orthonormalized. We shall now endeavor to show that the overlaps  between the eigenvectors defined by \eqref{u} and the ones specified by \eqref{blt}
are proportional to the standard two-variable Jacobi polynomials $J_{n,k}^{(a,b,c)}(x,y)$. This will be our main result. Similarly, we shall see that the overlaps between the eigenvectors associated to the upper vertex of the pentagon \eqref{u} and the ones attached to the right lower corner \eqref{brt} 
are proportional to the polynomials $J_{n,k}^{(c,b,a)}(1-x-y,y)$.

\subsection{The two-variable polynomials $J_{n,k}^{(a,b,c)}(x,y)$ as overlaps between representation bases of $\mathfrak{J}_2$ \label{subs:1stJ}}

The algebraic interpretation of the two-variable Jacobi polynomials $J_{n,k}^{(a,b,c)}(x,y)$ will be obtained by computing the overlaps  $\langle x, y|n,k;a,b,c\rangle$ between the eigenvectors defined in \eqref{u} and \eqref{blt}. Consider the additional representation basis $\{|x,k;b,c\rangle\}$ which is formed by the joint eigenvectors of $X_1$ and $L_1$:
\begin{equation} 
    X_1\;|x,k;b,c\rangle=x\;|x,k;b,c\rangle, \qquad L_1\;|x,k;b,c\rangle=-k(k+b+c)\;|x,k;b,c\rangle. \label{intbas}
\end{equation}
To obtain the expansion coefficients of the basis vectors $|x,y\rangle$ in terms of the basis vectors $|n,k;a,b,c\rangle$ (or vice versa), we can proceed in a telescoped way by convoluting the overlaps between the bases $\{|x,y\rangle\}$ and $\{|x,k;b,c\rangle\}$ with the overlaps between the bases $\{|x,k;b,c\rangle\}$ and $\{|n,k;a,b,c\rangle\}$. That is, assuming that the basis $\{|x,k;b,c\rangle\}$ is orthonormalized, we shall perform the computation by using:
\begin{equation}
    \langle n,k;a,b,c\;|\;x,y\rangle = \sum_{k'} \int _0^1 dx' \langle n,k;a,b,c\;|\;x',k';b,c\rangle\langle x',k';b,c\;|\;x,y\rangle. \label{conv1}
\end{equation}
This corresponds to following the path on the left side of the pentagon to reach the bottom from the top. This path is depicted in Figure \ref{fig:2} and will underlie the representation-theoretic interpretation of the polynomials $J_{n,k}^{(a,b,c)}(x,y)$.
\begin{figure}[ht] 
\centering
\begin{tikzpicture}[scale=1.5]
  \coordinate (A) at (0,0);
  \coordinate (B) at (2,0);
  \coordinate (C) at (4,0);

  \fill (A) circle (2pt);
  \fill (B) circle (2pt);
  \fill (C) circle (2pt);

  \draw (A) -- (B) -- (C);

  \node at ($(A)+(0,0.4)$) {\(X_1,\,X_3\)};
  \node at ($(B)+(0,0.4)$) {\(L_1,\,X_1\)};
  \node at ($(C)+(0,0.4)$) {\(L,\,L_1\)};
\end{tikzpicture}
\caption{The left hand side of the pentagon of Figure \ref{fig:tr} that connects the $\{|x,y\rangle\}$ basis to the $\{|n,k;a,b,c\rangle\}$ basis via the $\{|x,k;b,c\rangle\}$ basis\label{fig:2}}.
\end{figure}

\begin{pr} \label{prop 1}

 Let $\{|x,y\rangle\ |\  0\le x\le 1,0\le y \le 1-x\}$ and $\{|n,k;a,b,c\rangle\ |\ n,k\in \mathbb{N}, 0\le k \le n\}$ be the two representation bases of $\mathfrak{J}_2$ defined by equations \eqref{u} and \eqref{blt}. The two-variable Jacobi polynomials $J_{n,k}^{(a,b,c)}(x,y)$ defined in \eqref{defJac2} arise in this algebraic context as overlaps between those bases; namely,
 \begin{equation}
     \langle n,k;a,b,c\;|\;x,y\rangle =\sqrt{\frac{x^a y^b (1-x-y)^c}{N_k^{(b,c)}N_{n-k}^{(a,b+c+2k+1)}}}  J_{n,k}^{(a,b,c)}(x,y).
 \end{equation}
\end{pr}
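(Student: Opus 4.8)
The plan is to evaluate the right-hand side of the telescoping formula \eqref{conv1} by computing each of the two overlaps separately with the rank one Jacobi theory of Section \ref{sec:rank1}, and then collapsing the sum and integral against the Kronecker and Dirac deltas that the shared diagonal generators enforce. The point is that each factor in \eqref{conv1} lives inside a single centralizer subalgebra (an edge of the pentagon) that was already identified as a rank one Jacobi algebra, so the bivariate overlap factorizes into a product of two univariate Jacobi overlaps of the type \eqref{overJac}.

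First I would compute the lower overlap $\langle n,k;a,b,c\,|\,x',k';b,c\rangle$. Both vectors are eigenvectors of $L_1$, so this overlap carries a factor $\delta_{k,k'}$, and within a fixed $L_1$-eigenspace the commuting pair $(L,X_1)$ generates the rank one Jacobi algebra of Subsection \ref{sub3.1} with $\alpha=a$ and $\beta=b+c+2k+1$. There $|n,k;a,b,c\rangle$ is the discrete ($K_1$) eigenbasis while $|x',k';b,c\rangle$ is the continuous ($K_2=X_1$) eigenbasis, and a short computation shows that the eigenvalue of $K_1=L+k(k+a+b+c+2)I$ equals $-(n-k)(n-k+a+b+c+2k+2)$, identifying the degree as $n-k$. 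Using the orthonormalized rank one overlaps of Section \ref{sec:rank1} (where the normalized eigenfunctions are $\rho_m^{(\alpha,\beta)}(x)J_m^{(\alpha,\beta)}(x)$), this overlap equals $\delta_{k,k'}$ times $\rho_{n-k}^{(a,b+c+2k+1)}(x')\,J_{n-k}^{(a,b+c+2k+1)}(x')$.

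Next I would compute the upper overlap $\langle x',k';b,c\,|\,x,y\rangle$. Both vectors are eigenvectors of $X_1$, forcing a factor $\delta(x-x')$, and within a fixed $X_1=x$ eigenspace the pair $(L_1,X_3)$ generates the rank one Jacobi algebra of Subsection \ref{subs:X1} with $\alpha=b$, $\beta=c$ and $K_2=X_3/(X_1-I)+I$. On $|x,y\rangle$ the eigenvalue of $K_2$ is $\tfrac{1-x-y}{x-1}+1=\tfrac{y}{1-x}$, so by \eqref{overJac} this overlap is proportional to $\rho_k^{(b,c)}\!\bigl(\tfrac{y}{1-x}\bigr)\,J_k^{(b,c)}\!\bigl(\tfrac{y}{1-x}\bigr)$. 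The delicate point, and the one requiring the most care, is the Jacobian: the continuous eigenbasis is naturally orthonormalized in the variable $t=y/(1-x)$, whereas $|x,y\rangle$ is to be orthonormalized in $y$; since $dt=dy/(1-x)$ at fixed $x$, this forces a factor $(1-x)^{-1/2}$ in the amplitude.

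Finally I would substitute both overlaps into \eqref{conv1}; the deltas collapse the double sum--integral to the single term $k'=k$, $x'=x$, leaving
\[
\rho_{n-k}^{(a,b+c+2k+1)}(x)\,J_{n-k}^{(a,b+c+2k+1)}(x)\cdot(1-x)^{-1/2}\,\rho_k^{(b,c)}\!\Bigl(\tfrac{y}{1-x}\Bigr)\,J_k^{(b,c)}\!\Bigl(\tfrac{y}{1-x}\Bigr).
\]
It then remains to collect the weight factors. Writing out the $\rho$'s via $\rho_n^{(a,b)}(x)=[x^a(1-x)^b/N_n^{(a,b)}]^{1/2}$ and using $t^b(1-t)^c=\tfrac{y^b(1-x-y)^c}{(1-x)^{b+c}}$ for $t=y/(1-x)$, the accumulated power of $(1-x)$ in the amplitude is $\tfrac{b+c+2k+1}{2}-\tfrac{b+c}{2}-\tfrac12=k$. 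Thus a clean factor $(1-x)^k$ multiplies the product of the two univariate polynomials, which by the very definition \eqref{defJac2} is exactly $J_{n,k}^{(a,b,c)}(x,y)$, while the square-root weights assemble into the prefactor $\sqrt{x^a y^b(1-x-y)^c/(N_k^{(b,c)}N_{n-k}^{(a,b+c+2k+1)})}$, as claimed. The main obstacle is therefore not conceptual but this bookkeeping of normalization and Jacobian factors, which must conspire precisely to reproduce the $(1-x)^k$ built into \eqref{defJac2}.
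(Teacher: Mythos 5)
Your proposal is correct and follows essentially the same route as the paper: the same telescoping through the intermediate basis $\{|x,k;b,c\rangle\}$, the same identification of the two rank-one Jacobi subalgebras $C_{X_1}(\mathfrak{J}_2)$ and $C_{L_1}(\mathfrak{J}_2)$, and the same collapse of the deltas followed by the bookkeeping that produces the factor $(1-x)^k$. The only cosmetic difference is that you fix the normalization $C_k(x,y)$ by a Jacobian argument in the variable $t=y/(1-x)$, whereas the paper verifies it by explicitly checking the orthonormality integral --- the two computations are equivalent and yield the same $(1-x)^{-(b+c+1)/2}$ factor.
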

\begin{proof}

    (i) Assume that the basis $\{|x,y\rangle\}$ is normalized, i.e., that
    \begin{equation}
        \langle x',y'\;|\;x'',y''\rangle = \delta (x'-x'') \delta (y'-y'').
    \end{equation}
    With equation \eqref{conv1} in mind, we first focus on the computation of $\langle x',k';b,c|x,y\rangle$. Recall now that $L_1$ and $X_3$ generate the centralizer $C_{X_1}(\mathfrak{J}_2)$ and that the two bases in play are formed of eigenvectors of these two operators in addition to $X_1$ which is diagonal in the two bases. One will recall also that $C_{X_1}(\mathfrak{J}_2)$ is a rank one Jacobi algebra centrally extended by $X_1$ with the identification of the standard generators $K_1$ and $K_2$ given in \eqref{X1central}. The overlaps of the eigenvectors of these two $\mathfrak{J}_1$ generators were seen to be given by univariate Jacobi polynomials (up to factors) in \eqref{overJac}. We have from this formula that the variable is the eigenvalue of $K_2$ and is here $\frac{y}{1-x}$ since
    \begin{equation}
        K_2\;|x,y\rangle = \left[\frac{X_3}{X_1 -1}+1\right]\;|x,y\rangle = \left[\frac{1-x-y}{x-1} +1\right]|x,y\rangle = \frac{y}{1-x}|x,y\rangle.
    \end{equation}
    The degree is determined by the eigenvalues of $K_1$ which in this case are those of $L_1$ on the vectors $|x,k;b,c\rangle$ that is, $-k(k+b+c)$, and one will recall the identification of the parameters also provided in \eqref{X1central}. Hence, the intermediate overlaps $\langle x',k';b,c|x,y\rangle$ are given by:
    \begin{equation}
       \langle x',k';b,c\;|\;x,y\rangle = \delta (x-x')\; C_{k'}(x,y)\; J_{k'}^{(b,c)}\left(\frac{y}{1-x} \right). \label{overint1}
    \end{equation}
    The presence of the delta function $\delta (x-x')$ follows from the fact that these overlaps involve two bases whose elements are both eigenvectors of $X_1$. The function $C_k(x,y)$ is determined by the condition that the basis $\{|x,k;b,c\rangle\}$ is orthonormalized, i.e., that
    \begin{equation}
        \langle x', k';b,c|x'',k'';b,c\rangle = \delta(x'-x'')\;\delta_{k',k''}.
    \end{equation}
    It is thus seen that
    \begin{equation}
        C_k(x,y) = \sqrt{\frac{1}{N_k^{(b,c)}}\frac{y^b(1-x-y)^c}{(1-x)^{b+c+1}}}, \label{Ck}
    \end{equation}
    with $N_k^{(b,c)}$ is given by \eqref{norm1}.
    Indeed, with $C_k(x,y)$ so defined, and using \eqref{overint1}
    \begin{align}
        &\langle x',k';b,c|x'',k'';b,c\rangle \nonumber \\
        &= \int_{0\le x\le  1-y \le 1} dx dy\; \langle x',k';b,c|x,y\rangle\langle x,y|x'',k'';b,c\rangle \nonumber \\
        &= \int_{0\le x\le  1-y \le 1}  dx dy\; \delta(x'-x)\delta(x''-x) C_{k'}(x',y)C_{k''}(x'',y) J_{k'}^{(b,c)}\left(\frac{y}{1-x}\right) J_{k''}^{(b,c)}\left(\frac{y}{1-x} \right)\nonumber \\
        &=\delta (x'-x'') \sqrt{\frac{1}{N_{k'}^{(b,c)}N_{k''}^{(b,c)}}}\int_0^{1-x'} dy\;\frac{y^b(1-x'-y)^c}{(1-x')^{b+c+1}} J_{k'}^{(b,c)}\left(\frac{y}{1-x'}\right) J_{k''}^{(b,c)}\left(\frac{y}{1-x'}\right) \nonumber \\
        &=\delta (x'-x'') \sqrt{\frac{1}{N_{k'}^{(b,c)}N_{k''}^{(b,c)}}}\int_0^{1-x'} \frac{dy}{(1-x')}\; \left[\left(\frac{y}{1-x'}\right)^b \left(1-\frac{y}{1-x'}\right)^c\right]J_{k'}^{(b,c)}\left(\frac{y}{1-x'}\right) J_{k''}^{(b,c)}\left(\frac{y}{1-x'}\right) \nonumber \\
        &=\delta(x'-x'')\;\delta_{k',k''},
    \end{align}
    from observing that the last integral above corresponds to the orthogonality relation \eqref{orth1var} of the univariate Jacobi polynomials. Note that the parameters must satisfy $b,c>-1$.

    \smallskip

    (ii) Turn now to the overlaps between $\{|n,k;a,b,c\rangle\}$ and $\{|x',k';b,c\rangle\}$ that are respectively eigenvectors of the elements $L$ and $X_1$  generating the centralizer $C_{L_1}(\mathfrak{J}_2)$ of $L_1$ which is diagonal on these two sets of vectors. It was seen in Subsection \ref{sub3.1}, that
    $C_{L_1}(\mathfrak{J}_2)$ is a centrally extended rank one Jacobi algebra and that the  canonical $\mathfrak{J}_1$ generator $K_1$ is obtained by shifting $L$ by a central term related to $L_1$ while $K_2$ is simply $X_1$. The identification of the parameters is $\alpha = a$ and $\beta=2k+b+c+1$ with respect to \eqref{rank1}. It follows that $\{|n,k;a,b,c\rangle\}$ and $\{|x',k';b,c\rangle\}$ are respectively eigenbases of $K_1$ and $K_2$ satisfying:
    \begin{align}
        K_1 \; |n,k;a,b,c\rangle\ =& \;[L+k(k+a+b+c+2)]\;|n,k;a,b,c\rangle \nonumber\\
        =& -(n-k)[(n-k)+a+b+c+2k+2] |n,k;a,b,c\rangle, \\
        K_2 \;|x',k';b,c\rangle\ =& \;x'\;|x',k';b,c\rangle.
    \end{align}
Given all this information, upon comparing with the general expression \eqref{overJac} for the overlaps between the eigenbases of $K_1$ and $K_2$, we conclude that the overlaps $\langle n,k;a,b,c|x',k';b,c\rangle$  will be Jacobi polynomials in the variale $x$ (the eigenvalue of $K_2$) of degree $n-k$ (in view of the eigenvalues of $K_1$) with parameters as given above. Note that this requires $n \ge k \ge 0$. We therefore have:
\begin{equation}
    \langle n,k;a,b,c\;|\;x',k';b,c\rangle= \delta _{k,k'} A_{n,k}(x')J_{n-k}^{(a, b+c+2k+1)}(x'), \label{overint2}
\end{equation}
where the function $A_{n,k}(x)$ is fixed by the requirement that the basis vectors $\{|n,k;a,b,c\rangle\}$ be orthonormalized, i.e., that they satisfy
\begin{equation}
    \langle m,\ell;a, b,c\;|\;n,k;a,b,c\rangle=\delta _{m,n} \delta _{\ell,k}.
\end{equation}
This implies that 
\begin{equation}
    A_{n,k}(x) = \sqrt{\frac{x^a (1-x)^{(b+c+2k+1)}}{N_{n-k}^{(a,b+c+2k+1)}}},\label{Ank}
\end{equation}
where $N_n^{(a,b)}$ is given by \eqref{norm1}. This is seen as follows. From \eqref{overint2} and \eqref{Ank} we have
\begin{align}
    &\langle m,\ell;a,b,c|n,k;a,b,c\rangle = \nonumber\\
    &\sum_{k'=0}^n \int_0^1 dx\;\langle m, \ell;a,b,c\;|\;x,k';b,c\rangle\langle \; x, k';b,c|\;n, k;a,b,c\rangle =\nonumber \\
    &\sum_{k'=0}^n \int_0^1 dx\;\delta _{\ell ,k'} \delta _{k ,k'} A_{m,\ell}(x) A_{n,k}(x) J_{m-\ell}^{(a,b+c+2\ell +1)}(x)J_{n-k}^{(a,b+c+2k +1)}(x) =\nonumber \\
    & \delta _{\ell ,k} \;\frac{1}{\sqrt{N_{m-k}^{(a,b+c+2k+1)} N_{n-k}^{(a,b+c+2k+1)}}}\int _0^1 dx \;x^a(1-x)^{(b+c+2k+1)} J_{m-k}^{(a,b+c+2k +1)}(x)J_{n-k}^{(a,b+c+2k +1)}(x) =\nonumber \\
    & \delta_{m,n} \delta_{\ell,k},
\end{align}
recognizing in the last step the orthogonality relation \eqref{orth1var} of the univariate Jacobi polynomials.

\smallskip

(iii) We now have all the ingredients needed to obtain the overlaps  $ \langle n,k;a,b,c |x,y\rangle $ from \eqref{conv1}. Using the expression for  $\langle n,k;a,b,c\;|\;x',k';b,c\rangle $ given by \eqref{overint2} and \eqref{Ank} and the one given by \eqref{overint1} and \eqref{Ck} for $\langle x',k';b,c\;|\;x,y\rangle$, equation  \eqref{conv1} yields
\begin{align}
      &\langle n,k;a,b,c|x,y\rangle = \nonumber \\
      &\sum_{k'=0}^n \int_0^1 dx'\;\delta _{k,k'} \sqrt{\frac{(x')^a (1-x')^{(b+c+2k+1)}}{N_{n-k}^{(a,b+c+2k+1)}}}J_{n-k}^{(a, b+c+2k+1)}(x') \\
 &\qquad\qquad\times\delta (x-x')\; \sqrt{\frac{1}{N_{k'}^{(b,c)}}\frac{y^b(1-x-y)^c}{(1-x)^{b+c+1}}}\; J_{k'}^{(b,c)}\left(\frac{y}{1-x} \right)= \nonumber \\
      & \sqrt{\frac{x^a y^b (1-x-y)^c}{N_k^{(b,c)}N_{n-k}^{(a,b+c+2k+1)}}} J_{n-k}^{(a,b+c+2k+1)}\bigl(x\bigr) \ (1-x)^k \ J_k^{(b,c)}\left(\frac{y}{1-x}\right).
      \end{align}
      This concludes the proof of Proposition \ref{prop 1} since the last three factors are recognized from \eqref{defJac2} to define the two-variable Jacobi polynomials $J_{n,k}^{(a,b,c)}(x,y)$ .
\end{proof}
\subsection{Alternate two-variable Jacobi polynomials}
As mentioned above, it is also natural to consider the expansion of the basis vectors $|x,y\rangle$ associated with the top vertex of the pentagon in Figure \ref{fig:tr}, in terms of the vectors $|n,k;a,b,c\rangle ^{\pi}$ defined in \eqref{brt}, which form the basis corresponding to the bottom right corner of the pentagon. The computation of these expansion coefficients will follow closely the approach adopted in the preceding subsection to obtain the overlaps between the basis vectors associated with the apex and the bottom left corner of the pentagon. Consider here yet another basis formed by the elements $\{|(1-x-y), k;a,b\rangle^{\pi}\}$ that are the joint eigenvectors of $X_3$ and $L_3$:
\begin{align}
    X_3\;|(1-x-y), k; a,b\rangle^{\pi}&=(1-x-y)\;|(1-x-y), k; a,b\rangle^{\pi}, \\
    L_3\;|(1-x-y) ,k; a,b\rangle^{\pi}&=-k(k+a+b+1)\;|(1-x-y),k; a,b\rangle^{\pi}.
\end{align}
We shall enforce the orthonormalization of these vectors and use the resolution of the identity that they entail to obtain the overlaps ${}^{\scriptstyle \pi}\langle n,k;a,b,c\;|\;x,y\rangle$ using the formula
\begin{align}
    &{}^{\scriptstyle \pi}\langle n,k;a,b,c \mid x,y\rangle = \nonumber \\
    &\quad \sum_{k'} \int_0^1 \! d(1 - x' - y') \; 
    {}^{\scriptstyle \pi}\langle n,k;a,b,c \mid (1 - x' - y'), k';a,b\rangle ^{\pi}\,
    {}^{\scriptstyle \pi}\langle (1 - x' - y'), k';a,b \mid x, y\rangle. \label{conv2}
\end{align}
This amounts to convoluting with the intermediate basis vectors along right hand-side path from top to bottom along the edges of the pentagon. This portion of Figure \ref{fig:tr} is represented below in Figure \ref{fig:3} which depicts the algebraic underpinning of the alternate two-variable polynomials.
\begin{figure}[ht] 
\centering
\begin{tikzpicture}[scale=1.5]
  \coordinate (A) at (0,0);
  \coordinate (B) at (2,0);
  \coordinate (C) at (4,0);

  \fill (A) circle (2pt);
  \fill (B) circle (2pt);
  \fill (C) circle (2pt);

  \draw (A) -- (B) -- (C);

  \node at ($(A)+(0,0.4)$) {\(X_3,\,X_1\)};
  \node at ($(B)+(0,0.4)$) {\(L_3,\,X_3\)};
  \node at ($(C)+(0,0.4)$) {\(L,\,L_3\)};
\end{tikzpicture}
\caption{The right hand side of the pentagon of Figure \ref{fig:tr} that connects the $\{|x,y\rangle\}$ basis to the $\{|n,k;a,b,c\rangle^\pi\}$ basis via the $\{|(1-x-y),k;a,b\rangle^\pi\}$ basis.\label{fig:3}}
\end{figure}
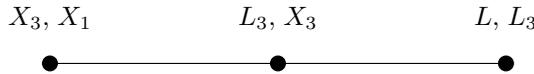

\begin{pr}\label{pro:2}
     Let $\{|x,y\rangle |0\le x\le 1,0\le y\le 1-x\}$ and $\{|n,k;a,b,c\rangle^{\pi}|n,k\in\mathbb{N}, 0\le k\le n\}$ be the two representation bases of $\mathfrak{J}_2$ defined by equations \eqref{u} and \eqref{brt}. The overlaps between the vectors of these two bases are given by
     \begin{equation}
         {}^{\scriptstyle \pi}\langle n,k;a,b,c \mid x,y\rangle = \sqrt{\frac{x^a y^b (1-x-y)^c}{N_k^{(b,a)}N_{n-k}^{(c,a+b+2k+1)}}}J_{n,k}^{(c,b,a)}\left[(1-x-y),y\right].\label{eqprop2}
     \end{equation}
     We observe that the expression for this overlap is obtained from the overlap $\langle n,k;a,b,c|x,y\rangle$ given in Proposition \ref{prop 1} by performing simultaneously the permutations:
     \begin{equation}
         x \leftrightarrow 1-x-y, \qquad a \leftrightarrow c.
     \end{equation}
\end{pr}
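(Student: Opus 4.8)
The plan is to mirror the three-step computation of Proposition \ref{prop 1}, but now following the right-hand path of the pentagon drawn in Figure \ref{fig:3} and using the convolution formula \eqref{conv2} through the intermediate basis $\{|(1-x-y),k;a,b\rangle^{\pi}\}$. Concretely, I would reduce the desired overlap to a product of two univariate overlaps: first ${}^{\pi}\langle(1-x'-y'),k';a,b\,|\,x,y\rangle$, controlled by the rank one Jacobi subalgebra $C_{X_3}(\mathfrak{J}_2)$ of Subsection \ref{subs:X3}, and second ${}^{\pi}\langle n,k;a,b,c\,|\,(1-x'-y'),k';a,b\rangle^{\pi}$, controlled by the centrally extended Jacobi subalgebra $C_{L_3}(\mathfrak{J}_2)$ of Subsection \ref{subs:L3}. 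Each factor is a univariate Jacobi polynomial by \eqref{overJac}, and the two orthonormalization constants are to be fixed exactly as $C_k(x,y)$ and $A_{n,k}(x)$ were in \eqref{Ck} and \eqref{Ank}.

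For the first factor I would use the identification \eqref{JacsubL3X1}, with $K_1=L_3$ and $K_2=\tfrac{X_1}{X_3-I}+I$, and parameters $\alpha=b,\ \beta=a$ from \eqref{parCX3}. Acting on $|x,y\rangle$ defined by \eqref{u} gives the $K_2$-eigenvalue
\[
K_2\,|x,y\rangle=\Bigl[\tfrac{X_1}{X_3-I}+I\Bigr]|x,y\rangle=\Bigl[\tfrac{x}{-x-y}+1\Bigr]|x,y\rangle=\tfrac{y}{x+y}\,|x,y\rangle,
\]
while the eigenvalue $-k'(k'+a+b+1)$ of $K_1=L_3$ on the intermediate vector fixes the degree to be $k'$. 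Hence ${}^{\pi}\langle(1-x'-y'),k';a,b\,|\,x,y\rangle=\delta(u-u')\,\widetilde C_{k'}(x,y)\,J_{k'}^{(b,a)}\!\bigl(\tfrac{y}{x+y}\bigr)$, with $u=1-x-y$, the delta enforced because both bases diagonalize $X_3$. The constant $\widetilde C_k$ is then read off by imposing orthonormality of the intermediate basis through the resolution of the identity of $\{|x,y\rangle\}$. This is the one genuinely new computation relative to Proposition \ref{prop 1}: since here $X_3$ (not $X_1$) carries the fixed eigenvalue, the planar integral at fixed $u=1-x-y$ must be performed in the variables $(u,t)$ with $t=\tfrac{y}{x+y}$, for which $dx\,dy=(x+y)\,du\,dt$. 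Tracking this Jacobian yields
\[
\widetilde C_k(x,y)=\sqrt{\tfrac{1}{N_k^{(b,a)}}\,\tfrac{x^a y^b}{(x+y)^{a+b+1}}},
\]
the image of \eqref{Ck} under $x\leftrightarrow 1-x-y,\ a\leftrightarrow c$, and the remaining $t$-integral is exactly \eqref{orth1var} with parameters $(b,a)$, valid for $a,b>-1$.

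For the second factor I would use \eqref{idenCL3}, with $K_1=L+k(k+a+b+c+2)I$ diagonal on $|n,k;a,b,c\rangle^{\pi}$ and $K_2=X_3$, and parameters $\alpha=c,\ \beta=a+b+2k+1$ from \eqref{parCL3}. The $K_2$-eigenvalue on the intermediate basis is $1-x'-y'$ and the $K_1$-eigenvalue on $|n,k;a,b,c\rangle^{\pi}$ collapses to $-(n-k)(n+k+a+b+c+2)$, identifying the degree as $n-k$. By \eqref{overJac} this gives ${}^{\pi}\langle n,k;a,b,c\,|\,(1-x'-y'),k';a,b\rangle^{\pi}=\delta_{k,k'}\,\widetilde A_{n,k}(u')\,J_{n-k}^{(c,a+b+2k+1)}(u')$, and orthonormality of $\{|n,k;a,b,c\rangle^{\pi}\}$ via \eqref{orth1var} with parameters $(c,a+b+2k+1)$ forces $\widetilde A_{n,k}(u)=\sqrt{u^c(1-u)^{a+b+2k+1}/N_{n-k}^{(c,a+b+2k+1)}}$, the image of \eqref{Ank}. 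Substituting both factors into \eqref{conv2}, the $u'$-integration against $\delta(u-u')$ and the $k'$-sum against $\delta_{k,k'}$ collapse everything; writing $1-u=x+y$, the product of the two square roots combines to $\sqrt{x^a y^b (1-x-y)^c/(N_k^{(b,a)}N_{n-k}^{(c,a+b+2k+1)})}\,(x+y)^k$. Recognizing $J_{n-k}^{(c,a+b+2k+1)}(1-x-y)\,(x+y)^k\,J_k^{(b,a)}\!\bigl(\tfrac{y}{x+y}\bigr)$ as $J_{n,k}^{(c,b,a)}[(1-x-y),y]$ through \eqref{defJac2} then delivers \eqref{eqprop2}.

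The only real obstacle is bookkeeping rather than conceptual: one must keep in mind that the fixed variable along the right path is $1-x-y$ rather than $x$, so that the correct change of variables is $(x,y)\mapsto(1-x-y,\tfrac{y}{x+y})$ with Jacobian $x+y$, and that the exponents recombine as $(x+y)^{a+b+2k+1}\cdot(x+y)^{-(a+b+1)}=(x+y)^{2k}$, whence $\sqrt{(x+y)^{2k}}=(x+y)^k$ supplies precisely the factor demanded by \eqref{defJac2}. A tempting shortcut would be to deduce the statement directly from Proposition \ref{prop 1} by an automorphism of $\mathfrak{J}_2$ effecting $X_1\leftrightarrow X_3,\ L_1\leftrightarrow L_3,\ a\leftrightarrow c$ (the reflection of the pentagon of Figure \ref{fig:tr}); I would avoid relying on this unless the defining relations of Appendix \ref{APP A} are checked to be invariant under it, and would present the direct computation above as the self-contained route.
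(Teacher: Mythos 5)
Your proposal is correct and follows essentially the same route as the paper: the convolution \eqref{conv2} through the intermediate basis $\{|(1-x-y),k;a,b\rangle^{\pi}\}$, with the first factor governed by $C_{X_3}(\mathfrak{J}_2)$ via \eqref{JacsubL3X1} (giving $J_{k}^{(b,a)}(\tfrac{y}{x+y})$ and the normalization you call $\widetilde C_k$, identical to the paper's $D_k$) and the second by $C_{L_3}(\mathfrak{J}_2)$ via \eqref{idenCL3} (giving $J_{n-k}^{(c,a+b+2k+1)}(1-x-y)$ with your $\widetilde A_{n,k}$ matching the paper's $B_{n,k}$). Your explicit treatment of the Jacobian $dx\,dy=(x+y)\,du\,dt$ fills in a step the paper dismisses as ``checked as was done in Subsection \ref{subs:1stJ}'', and your recombination of exponents to $(x+y)^k$ is the correct reading of the paper's intermediate display.
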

\begin{proof}
    (i) We start by computing the overlaps between the eigenvectors $|\; (1 - x' - y'), k';a,b \rangle ^{\pi}$ of $L_3$ and the eigenvectors $|x,y\rangle$ of $X_1$ that generate the centralizer $C_{X_3}(\mathfrak{J}_2)$ of $X_3$. This algebra was recognized in Subsection \ref{subs:X3} as a centrally extended rank one Jacobi algebra with parameters $\alpha = b,\; \beta = a$. The canonical generators $K_1$ and $K_2$ identified in \eqref{JacsubL3X1} are each diagonal in one of these two bases with the eigenvalues of $K_1=L_3$ equal to $-k(k+a+b+1)$ in the first basis and those of $K_2$ given by: 
    \begin{equation}
        K_2 \;|\;x, y\rangle = \left[\frac{X_1}{X_3 -1} +1\right]\;|\;x, y\rangle=\left(\frac{y}{x+y}\right)\;|\;x, y\rangle,
    \end{equation}
    in the second basis.
    Comparison with the expression \eqref{overJac} for the overlaps between eigenvectors of the $\mathfrak{J}_1$-generators $K_1$ and $K_2$, tells us that $  {}^{\scriptstyle \pi}\langle (1 - x' - y'), k';a,b \;| \;x,y\rangle$ will involve a univariate Jacobi polynomial in the variable given by the eigenvalues of $K_2$ which is of degree $k$ in light of the eigenvalues of $K_1$. Given the correspondence of the parameters, one precisely has:
\begin{equation}
     {}^{\scriptstyle \pi}\langle (1 - x' - y'), k';a,b \;| \;x,y\rangle = \delta (x'+y'-x-y) D_{k'}\left[(x+y), y\right] J_{k'}^{(b,a)}\left(\frac{y}{x+y}\right),
\end{equation}
    where
    \begin{equation}
        D_{k'}\left[(x+y), y\right]=\sqrt{\frac{1}{N_{k'}^{(b,a)}} \frac{x^ay^b}{(x+y)^{a+b+1}}}.
    \end{equation}
The $\delta$-function occurs
because the elements of both bases are eigenvectors of the central $X_3$. 
The function $D_{k'}\left[(x+y), y\right]$ is fixed by the condition that the basis vectors $|\;(1 - x' - y'), k';a,b\rangle ^{\pi}$ verify the orthonormalization condition
\begin{equation}
         {}^{\scriptstyle \pi}\langle (1 - x' - y'), k';a,b \;|\;(1 - x'' - y''), k'';a,b\rangle ^{\pi} = \delta _{k',k''} \delta (x'+y'-x''-y'').
\end{equation}
This is checked as was done in Subsection \ref{subs:1stJ}.

\smallskip

(ii) With an eye to \eqref{conv2}, the next step is to compute the overlaps between the vectors $\{|\;n,k;a,b,c\rangle^{\pi}\}$ and those of the set $\{|\;(1 - x' - y'), k';a,b\rangle ^{\pi}\}$ which are respectively eigenvectors of $L$ and $X_3$ while both diagonalizing $L_3$. They form two representation bases of the centralizer $C_{L_3}(\mathfrak{J}_2)$ of $L_3$ which is again a central extension of the rank one Jacobi algebra. The correspondence is carried out in Subsection \ref{subs:L3}. The upshot is on the one hand, that the parameters of the presentation \eqref{rank1} are $\alpha =c$ and $\beta=a+b+2k+1$ and on the other hand, that the family $\{|(1 - x' - y'), k';a,b\rangle ^{\pi}\}$ is an eigenbasis of the standard generator $K_2$ since it is identified as $X_3$ while the vectors $|n,k;a,b,c\rangle^{\pi}\} $ form an eigenbasis of $K_1$ satisfying
\begin{align}
    K_1 \;|n,k;a,b,c\rangle^{\pi} &= \left[L + k(k + a + b + c + 2)\right] \;|n,k;a,b,c\rangle ^{\pi}\nonumber \\
    &= (n - k)\left[(n - k) - a + b + c + 2k + 2\right] \;|n,k;a,b,c\rangle^{\pi}.
\end{align}
From formula \eqref{overJac} that gives the expression of the overlaps between the eigenbases of the $\mathfrak{J}_1$-generators $K_1$ and $K_2$, we have that 
\begin{equation}
    {}^{\scriptstyle \pi}\langle n,k;a,b,c \;| (1-x'-y'),k';a,b\rangle ^{\pi} = \delta_{k,k'} \;B_{n,k}(1-x'-y') \;J_{n-k}^{c, a+b+2k+1)} (1-x'-y'), \label{pi-int}
\end{equation}
with 
\begin{equation}
    B_{n,k}(1-x-y) =\sqrt{\frac{(1-x-y)^c (x+y)^{a+b+2k+1}}{N_{n-k}^{(c,a+b+2k+1)}}},
\end{equation}
so that 
\begin{align}
     &{}^{\scriptstyle \pi}\langle n,k;a,b,c \;|\;n,k'';a, b, c\rangle  \nonumber \\
     &=\;\sum_{k'=0}^{n}\; \int_0 ^1\;d(1-x'-y') \;{}^{\scriptstyle \pi}\langle n,k;a,b,c \;|(1-x'-y'), k';a,b\rangle ^{\pi}\;{}^{\scriptstyle \pi}\langle (1-x'-y'),k';a,b \;|n, k'';a,b,c\rangle ^{\pi}  \nonumber \\
     & = \;\delta_{k,k''}.
\end{align}
This is validated in a by now routine way and the presence of the Kronecker delta is due to the fact that $L_3$ is central. Note again, in view of \eqref{pi-int} now, that the range of $n$ and $k$ over $\mathbb{N}$ must be restricted: $n \ge k \ge 0$.

\smallskip
(iii) Substituting the results of these computations in \eqref{conv2}, one arrives at the formula:
\begin{align}
        &{}^{\scriptstyle \pi}\langle n,k;a,b,c \mid x,y\rangle = \nonumber \\
    &\sqrt{\frac{x^a y^b(1-x-y)^c}{N_k^{(b,a)}N_{n-k}^{(c,a+b+2k+1)}}}\; J_{n-k}^{(c, a+b+2k+1)}(1-x-y)\;(x+y)^{2k}\;J_k^{(b,a)}\;\left(\frac{y}{x+y}\right),
\end{align}
which confirms equation \eqref{eqprop2}. Furthermore, it is readily observed that 
\begin{equation}
      {}^{\scriptstyle \pi}\langle n,k;a,b,c \mid x,y\rangle =\langle n,k;c,b,a\;|\;(1-x-y), y\rangle ,
\end{equation}
thus concluding the proof of Proposition \ref{pro:2}.
\end{proof}

Guided by Figure \ref{fig:tr}, we shall next show that the elements of the family of two-variable Jacobi polynomials attached to one of the two top-to-bottom paths of the pentagon have expansion coefficients over the elements of the family associated to the  other path that are given in terms of Racah polynomials.

\section{Overlaps and Racah polynomials \label{sec:over}}

We have focused up to now on the overlaps $\langle n,k;a,b,c\;|\;x,y\rangle$ and  ${}^{\scriptstyle \pi}\langle n,k;a,b,c\mid x,y\rangle$ between the bases attached to the top and the two bottom vertices of the pentagon of Figure \ref{fig:tr}. As stated in Propositions \ref{prop 1} and \ref{pro:2}, these were found to be expressed in terms of the two-variable Jacobi polynomials $J_{n,k}^{(a,b,c)}(x,y)$ and their transform $J_{n,k}^{(c,b,a)}(1-x-y,y)$ under the transpositions: $a \leftrightarrow c$ and $x \leftrightarrow 1-x-y$. In order to determine the relations between these sets of functions, let us now pay attention to the connection between the bases attached to the left and right bottom vertices of the pentagon of Figure \ref{fig:tr} whose overlaps will read ${}^{\scriptstyle \pi}\langle n,\ell;a,b,c\;|\;n,m;a,b,c\rangle$. 

We recall (see \eqref{blt} and \eqref{brt}) that the vectors $|\;n, m; a, b, c\;\rangle$ are eigenvectors of $L_1$ with eigenvalues $-m(m+b+c+1)$, that the vectors $|\;n, \ell; a, b, c\;\rangle ^{\pi}$ are eigenvectors of $L_3$ with eigenvalues $-\ell(\ell + a+b+1)$ and also, that both sets are eigenvectors of $L$ with eigenvalues $-n(n+a+b+c+2)$.

We observed in Subsection \ref{subs:L} that $L_1$ and $L_3$ generate the rank one Racah algebra $\mathfrak{R}_1$ via the identification $K_1=L_1$, $K_2=L_3$ of the standard generators and with the coefficients $\xi, \eta_1, \eta_2, \zeta_1, \zeta_2$ of the defining relations \eqref{Ract} given respectively by \eqref{xiJac}, \eqref{eta1Jac}, \eqref{eta2Jac}, \eqref{zeta1Jac}, \eqref{zeta2Jac}.

We indicated in Subsubsection \ref{subs:relR} that the overlaps between the elements of the two representation bases of $\mathfrak{R}_1$ defined as eigenvectors of $K_1$ on the one hand and of $K_2$ on the other, are expressed in terms of Racah polynomials. The explicit formula is given in equation \eqref{ove_R} and the correspondence between the parameters $\xi, \eta_1, \eta_2, \zeta_1, \zeta_2$ of the $\mathfrak{R}_1$ relations and the parameters $\alpha, \beta, \gamma, \delta$ of the Racah polynomials $R_n(\lambda(\ell); \alpha, \beta, \gamma, \delta)$ given by the relations \eqref{xiRac}, \eqref{eta1Rac}, \eqref{eta2Rac}, \eqref{zeta1Rac}, \eqref{zeta2Rac}.

All this can now be used to give the overlaps between the bases attached to the two bottom vertices of the pentagon of Figure \ref{fig:tr}. From the relations between the parameters $a, b, c$ of the rank two Jacobi algebra and those $\xi, \eta_1, \eta_2, \zeta_1, \zeta_2$ of $\mathfrak{R}_1$ as realized by $L_1$ and $L_3$, and from the formulas connecting the parameters of the algebra $\mathfrak{R}_1$ to the $\alpha, \beta, \gamma, \delta$ of the Racah polynomials, one can express the latter in terms of $a, b, c$. This is done by solving the equations provided by equating the formulas for  $\xi, \eta_1, \eta_2, \zeta_1, \zeta_2$ given in the set of equations \eqref{xiJac}, \eqref{eta1Jac}, \eqref{eta2Jac}, \eqref{zeta1Jac}, \eqref{zeta2Jac} to those given in the set \eqref{xiRac}, \eqref{eta1Rac}, \eqref{eta2Rac}, \eqref{zeta1Rac}, \eqref{zeta2Rac}. Recalling that the central $L$ takes the value $-n(n+a+b+c+2)$, this gives \footnote{We may remark that other solutions such as $\alpha =b,\; \beta=c, \;\gamma=a+b+c+n+1, \;\delta=-c-n-1$, yield the same Racah polynomials $R_n(\lambda(x); \alpha, \beta, \gamma, \delta)$ as seen from their definition \eqref{defRac}.}
\begin{equation}
    \alpha =b, \qquad \beta=c, \qquad \gamma=-n-1, \qquad \delta = n+1+a+b.
\end{equation}
We may hence infer from \eqref{ove_R} that 
\begin{align}
    {}^{\scriptstyle \pi}\langle n,\ell;a,b,c\;|\;n,m;a,b,c\rangle &= (-1)^m\;\sqrt{\frac{w^{(b,c,-n-1,n+1+a+b)}(\ell)}{M_m^{(b,c,-n-1,n+1+a+b)}}}R_m\left(\lambda^{(a,b)}(\ell); b,c,-n-1, n+1+a+b\right) \nonumber \\
    &=(-1)^m\;S_m^{(b, c, -n-1, n+1+a+b)}(\ell),
    \label{31over}
\end{align} 
with $w^{(\alpha, \beta, \gamma, \delta)}(x)$ given by \eqref{weight} and $M_n^{(\alpha, \beta, \gamma, \delta)}$ by \eqref{normal} and \eqref{K} and where
\begin{equation}
    \lambda^{(a,b)} (\ell)= \ell(\ell + a+b+1).
\end{equation}

\noindent Note the symmetrical form of the intervening Racah polynomials:
\begin{align}
&R_m\left(\lambda^{(a,b)}(\ell); b, c, -n-1, n+1+a+b\right) =\nonumber  \\
&{}_4F_3\left( \begin{array}{c}
-m,\ m+b+c+1,\ -\ell,\ \ell+a+b+1 \\
b+1,\ n+a+b+c+2,\ -n
\end{array} ; 1 \right), \quad m, \ell = 0, 1, 2, \ldots, n,
\end{align}
which is such that
\begin{equation}
    R_m\left(\lambda^{(a,b)}(\ell); b, c, -n-1, n+1+a+b\right)=R_{\ell}\left(\lambda^{(c,b)}(m); b, a, -n-1, n+1+b+c\right).
\end{equation}
It is checked also, using say \eqref{orthfn}, that the orthonormalized functions $ S_m^{(\alpha, \beta, \gamma, \delta)}(\ell)$ admit the same symmetry, i.e.,
\begin{equation}
     S_m^{(b, c, -n-1, n+1+a+b)}(\ell) =  S_{\ell}^{(b, a, -n-1, n+1+b+c)}(m) \label{duality},
\end{equation}
under the exchanges
\begin{equation}
    m \; \leftrightarrow \;\ell, \qquad a\; \leftrightarrow \;c.
\end{equation}
\noindent From
\begin{equation}
   {}^{\scriptstyle \pi}\langle\; n,\ell;a,b,c\;|\;x, y\;\rangle=
   \sum_{m=0}^{n} {}^{\scriptstyle \pi}\langle \; n,\ell;a,b,c\;|\;n, m; a, b, c\;\rangle \langle \;n, m;a, b, c\;|\;x, y \rangle, 
\end{equation}
equation \eqref{31over}, and the results of Propositions \ref{prop 1} and \ref{pro:2}, one arrives at
\begin{pr}\label{pro:3}
The expansion of the bivariate Jacobi polynomials $J_{n,k}^{(c,b,a)}\left[(1-x-y),y\right]$  in terms of the two-variable Jacobi polynomials  $J_{n,k}^{(a, b, c)} (x, y)$ with $(n \ge k \ge 0)$ is given as follows:
\begin{align}
    &J_{n,\ell}^{(c,b,a)}\left[(1-x-y),y\right] = \noindent \\
    &\mathfrak{F}^{(c,b,a;\;n)}(\ell) \;  \sum_{m=0}^{n} \; \frac{(-1)^m}{\mathfrak{G}_m ^{(a,b,c;\;n)}} \; R_m\left(\lambda^{(a,b)}(\ell); b,c,-n-1, n+1+a+b\right) \; J_{n,m}^{(a, b, c)} (x, y) ,\label{Jpiexp}
\end{align}
where $R_n(\lambda(x); \alpha, \beta, \gamma, \delta)$ are univariate Racah polynomials and where
\begin{equation}
    \mathfrak{F}^{(c,b,a;\;n)}(\ell)= \sqrt{w^{(b,c,-n-1,n+1+a+b)}(\ell)\; N_{\ell} ^{(b,a)}\; N_{n-\ell}^{(c,a+b+2\ell +1)}},
\end{equation}
and 
\begin{equation}
    \mathfrak{G}_m ^{(a,b,c;\;n)} = \sqrt{M_m^{(b,c,-n-1,n+1+a+b)}\; N_m^{(b,c)}\; N_{n-m}^{(a, b+c+2m+1)}}.
\end{equation}
\end{pr}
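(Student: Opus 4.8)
The plan is to obtain \eqref{Jpiexp} by direct substitution into the resolution of the identity displayed immediately above the statement, namely
\[
{}^{\scriptstyle \pi}\langle n,\ell;a,b,c \mid x,y\rangle = \sum_{m=0}^{n} {}^{\scriptstyle \pi}\langle n,\ell;a,b,c \mid n,m;a,b,c\rangle\,\langle n,m;a,b,c \mid x,y\rangle .
\]
On the left-hand side I would insert the closed form of Proposition \ref{pro:2} (with the summation-free index renamed $\ell$); on the right I would use the connection coefficient \eqref{31over} for the first factor and Proposition \ref{prop 1} for the second. No new analytic ingredient is required: the content of the proposition is already encoded in these three overlaps, and what remains is purely a matter of collecting prefactors.

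The first simplification I would exploit is that the geometric weight $\sqrt{x^a y^b (1-x-y)^c}$ enters identically through Proposition \ref{prop 1} and Proposition \ref{pro:2}; it therefore appears on both sides and cancels, leaving an identity whose coefficients are free of $x,y$. After this cancellation the left-hand side is $J_{n,\ell}^{(c,b,a)}[(1-x-y),y]$ divided by $\sqrt{N_\ell^{(b,a)} N_{n-\ell}^{(c,a+b+2\ell+1)}}$, while the right-hand side is the sum over $m$ of $(-1)^m R_m\!\left(\lambda^{(a,b)}(\ell); b,c,-n-1,n+1+a+b\right) J_{n,m}^{(a,b,c)}(x,y)$, each term weighted by the surviving normalization constants coming from \eqref{31over} and Proposition \ref{prop 1}.

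The remaining step is bookkeeping. Multiplying through by $\sqrt{N_\ell^{(b,a)} N_{n-\ell}^{(c,a+b+2\ell+1)}}$ to isolate $J_{n,\ell}^{(c,b,a)}$, I would sort the square-root factors according to their dependence: the weight $w^{(b,c,-n-1,n+1+a+b)}(\ell)$ from \eqref{31over}, together with $N_\ell^{(b,a)}$ and $N_{n-\ell}^{(c,a+b+2\ell+1)}$, depends only on $\ell$ and $n$, so it factors out of the sum and assembles into $\mathfrak{F}^{(c,b,a;\;n)}(\ell)$. The norm $M_m^{(b,c,-n-1,n+1+a+b)}$ from the same equation, together with $N_m^{(b,c)}$ and $N_{n-m}^{(a,b+c+2m+1)}$, depends on the summation index $m$ and assembles, in the denominator, into $\mathfrak{G}_m^{(a,b,c;\;n)}$. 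This reproduces \eqref{Jpiexp} exactly. The only place where care is warranted — and essentially the sole source of a possible slip — is this partition of the normalization constants between the $\ell$-dependent prefactor $\mathfrak{F}$ and the $m$-dependent denominator $\mathfrak{G}_m$; once the weight and the norm of \eqref{31over} are assigned to $\mathfrak{F}$ and $\mathfrak{G}_m$ respectively, no residual factor survives and the identity closes.
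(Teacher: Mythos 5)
Your proposal is correct and follows exactly the route the paper itself takes: the paper derives Proposition \ref{pro:3} by inserting the resolution of the identity over the basis $\{|n,m;a,b,c\rangle\}$, substituting \eqref{31over} together with Propositions \ref{prop 1} and \ref{pro:2}, cancelling the common weight $\sqrt{x^a y^b (1-x-y)^c}$, and sorting the remaining normalization constants into the $\ell$-dependent prefactor $\mathfrak{F}$ and the $m$-dependent denominator $\mathfrak{G}_m$. Your bookkeeping of those constants reproduces the stated formulas exactly.
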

This result was obtained by Dunkl in \cite{dunkl1984orthogonal} (with different normalizations). See also \cite{labriet2024realisations}. The presence of the $(-1)^m$ factor will be justified in Section \ref{sec:symm}.
Keeping in mind that the bivariate Jacobi polynomials are composed of two single-variable Jacobi polynomials, equation \eqref{Jpiexp} corresponds to the Biedenharn-Elliott \cite{biedenharn1952identity}, \cite{elliott1953theoretical} formula for Racah coefficients in the continuum limit.

\noindent From the orthogonality of the bivariate Jacobi polynomials or equivalently from
\begin{equation}
   {}^{\scriptstyle \pi}\langle n,\ell;a,b,c\;|\;n,m;a,b,c\rangle =  \int_{0\le x\le  1-y \le 1}  dx \; dy  {}^{\scriptstyle \pi}\langle n,\ell;a,b,c\;|\;x,y\;\rangle \langle x,y\;|\;n,m;a,b,c\rangle,
\end{equation}
we also see that the Racah polynomials can be written as the convolution of two bivariate Jacobi polynomials according to
\begin{align}
    &\mathfrak{F}^{(c,b,a;\;n)}(\ell) \; R_m\left(\lambda^{(a,b)}(\ell); b,c,-n-1, n+1+a+b\right) = \nonumber \\
    & (-1)^m \sqrt{\frac{M_m^{(b,c,-n-1,n+1+a+b)}}{ N_m^{(b,c)}\; N_{n-m}^{(a, b+c+2m+1)}}}\; \int_{0\le x\le  1-y \le 1}  dx\;dy\;x^a y^b (1-x-y)^c\;
    J_{n,\ell}^{(c,b,a)}\left[(1-x-y),y\right]\; J_{n,m}^{(a,b,c)}(x,y).
\end{align}

We have been providing an algebraic interpretation of the two-variable Jacobi polynomials on the triangle. As pointed out in \cite{dunkl1984orthogonal} and \cite{dunkl2014orthogonal}, these functions and their description should exhibit the order three symmetry of that simplex. Two families of bivariate Jacobi polynomials related to each other by one such transformation appeared so far and this symmetry was further found to be represented in this Jacobi polynomial basis by a matrix involving the Racah polynomials in its entries. The fuller expression of the underlying order three-fold symmetry is addressed in the next section. 

\section{Symmetry of order three \label{sec:symm}}

In his discussion of the symmetry of order three of the bivariate Jacobi polynomials, Dunkl introduced an additional family, namely, $J_{n,k}^{(b,a,c)}(y,x)$ which is obtained from the standard $J_{n,k}^{(a,b,c)}(x,y)$ by the cycle $x\leftrightarrow y$ and $ a \leftrightarrow b$ which is complementary to the one leading to $J_{n,k}^{(c,b,a)}\left[(1-x-y),y\right]$. This is motivated by the nature of the underlying simplex. We next examine how this supplementary family can be integrated into the framework developed so far.

\subsection{The family $J_{n,k}^{(b,a,c)}(y,x)$ \label{sec:bac}}

The issue here is to design another pentagon diagram as in Figure \ref{fig:tr}, such that one of its top-to-bottom paths again leads to the standard two-variable Jacobi polynomials and the overlaps between the bases on the other side are the polynomials in the title of this subsection. If the two bases attached to the bottom edge of the pentagon have Racah polynomials as mutual expansion coefficients in this case also, this would show that the algebraic interpretation of the two-variable Jacobi polynomials offered here properly integrates the order three symmetry of the simplex on which these polynomials are defined. This diagram is provided in Figure \ref{fig:4} and we shall now explain that it has the desired features.

As hinted by the notation that had the generators of the rank two Jacobi algebra $\mathfrak{J}_2$ labelled asymmetrically by the indices $1$ and $3$,
to achieve the goal stated above we need to modify the set of generators and to introduce correspondingly new eigenbases. To this end we shall replace $L_3$ and $X_3$ in the set of generators by $L_2$ and $X_2$ which are defined by
\begin{equation}
    L_2=L - L_1 - L_3, \qquad \text{and} \qquad X_2=1-X_1-X_3. \label{def2}
\end{equation}
This is what these symbols stand for in the pentagonal diagram of Figure \ref{fig:4} below. 

First, it is seen that $[L_2, X_2] = 0$. Indeed, one readily finds that
\begin{equation}
    [L_2, X_2]=-[L,X_1]-[L,X_3]+[L_1,X_3]+[L_3,X_1],
\end{equation}
which is shown to be identically $0$ in Appendix \ref{APP A}, see equation \eqref{identity}. It is moreover obvious that $[L,L_2]=0$ since $L$ commutes with $L_1$ and $L_3$. Hence, the pairs attached to the vertices of the pentagon of Figure \ref{fig:4} are all commutative.

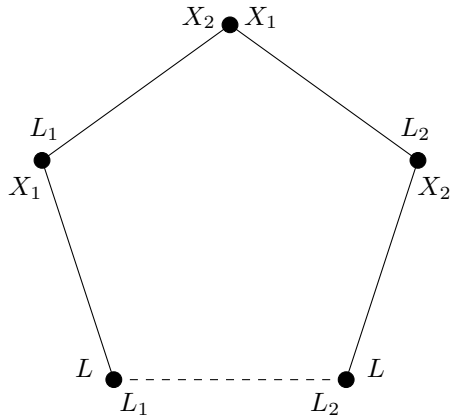
\begin{figure}[htbp] 
\begin{center}
\begin{minipage}[t]{.9\linewidth}
\begin{center}
\begin{tikzpicture}[scale=1.3]
   
    \def\R{2cm}
    \def\Rt{2.1cm}
    \def\Rtt{1.85cm}

    \foreach \i in {1,...,5} {
        \coordinate (P\i) at ({90 + (\i-1)*360/5}:\R);
    }
    \draw (P3)--(P2) -- (P1) -- (P5) -- (P4);
    \draw[dashed] (P3) -- (P4);


\node at ({90 + (+0.12)*360/5}:\Rt) {$X_2$};
\node at ({90 + (-0.12)*360/5}:\Rt) {$X_1$};
\node at ({90 + (-0.12+1)*360/5}:\Rt) {$L_1$};
\node at ({90 + (+0.12+1)*360/5}:\Rt) {$X_1$};
\node at ({90 + (-0.12-1)*360/5}:\Rt) {$X_2$};
\node at ({90 + (+0.12-1)*360/5}:\Rt) {$L_2$};
\node at ({90 + (-0.12-2)*360/5}:\Rt) {$L_2$};
\node at ({90 + (+0.12-2)*360/5}:\Rt) {$L$};
\node at ({90 + (-0.12+2)*360/5}:\Rt) {$L$};
\node at ({90 + (+0.12+2)*360/5}:\Rt) {$L_1$};

     \foreach \i in {1,...,5} {
        \draw[fill]  (P\i) circle (0.08); ;
     }

\end{tikzpicture}
\caption{This pentagon has the two-variable Jacobi polynomials $J_n^{(a,b,c)}(x,y)$ associated, like in Figure \ref{fig:tr}, to the left-hand path from top to bottom.  The polynomials corresponding to the right-hand path are the polynomials obtained from the latter under the simultaleous exchanges $x \leftrightarrow y$ and $ a \leftrightarrow b$, i.e $J_n^{(b,a,c)}(y,x)$. This diagram depicts the connection between the two families of bivariate Jacobi polynomials that are related under a permutation complementary to the one arising on Figure \ref{fig:tr}, namely $x \leftrightarrow 1-x-y$ and $ a \leftrightarrow c$.\label{fig:4} }

\end{center}
\end{minipage}
\end{center}
\end{figure}

\noindent These pairs of generators will again be used to define eigenbases for the $\mathfrak{J}_2$ module under consideration. Note that $X_2$ acts as multiplication by $y$ on the basis vectors $|\;x,y\rangle$:
\begin{equation}
    X_2 \;|\;x,y\rangle=[1-x-(1-x-y)]\;|\;x,y\rangle\;=y\;|\;x,y\rangle.
\end{equation}
\subsubsection{The left-hand side of Figure \ref{fig:4}}

The bases attached to the left vertices of Figure \ref{fig:4} are the same as those associated to the corresponding vertices of Figure \ref{fig:tr}. Hence the computation of the overlaps  $\langle n,k;a,b,c\;|\;x,y\rangle$ between the top and bottom left bases should not depend on which diagram is used and should give the result of Proposition \ref{prop 1}. Yet, in Figure \ref{fig:4}, $X_2$ instead of $X_3$ appears at the summit, with the effect, at least superficially, of modifying the realization of the rank one Jacobi algebra associated to the first edge from the top. Let us examine how this does not change the end outcome for the overlaps.

Given the definition \eqref{def2} of $X_2$, one observes with the help of the defining relations of $\mathfrak{J}_2$ given in Appendix \ref{APP A} that $L_1$ and $X_2$ generate a subalgebra with the following commutation relations:
   \begin{subequations}\label{L1X2L1tt}
         \begin{align}   
 [[L_1,X_2],L_1]&=2\{X_2 ,L_1\}+\{X_1,L_1\}-2L_1  
 -(b+c) \left((b+c+2)X_2 -(b+1)(I-X_1) \right),\label{L1X2L1t}\\
 [[L_1,X_2],X_2]&=-2X_2^2+2(I-X_1)X_2.\label{L1X2X2t}
 \end{align}
   \end{subequations}
Direct comparison shows that these relations have precisely the same structure as the relations \eqref{L1X3L1tt} between $L_1$ and $X_3$ and are obtained from the latter under the replacements:
\begin{equation}
    X_3 \rightarrow X_2; \qquad c \rightarrow b, \quad b\rightarrow c.
\end{equation}
From the analysis of the centralizer $\mathfrak{C}_{X_1}(\mathfrak{J}_1)$ performed in Subsection \ref{subs:X1} for which we are here using instead $L_1$ and $X_2$ as generators, we infer by analogy that these two elements form a rank one Jacobi algebra \eqref{rank1} where the canonical generators $K_1$ and $K_2$, and the parameters $\alpha$ and $\beta$ are identified as follows:
\begin{equation}
    K_1=L_1, \qquad K_2= 1-\frac{X_2}{1-X_1}; \qquad \alpha=c, \quad \beta=b. \label{ident2l}
\end{equation}
Besides, the same $\{|x,k;b,c\rangle\}$ form an eigenbasis of $K_1$, (see \eqref{intbas}) with eigenvalues $-k(k+b+c)$ and the set $\{|x,y\rangle\}$ is an eigenbasis of $K_2$ satisfying:
\begin{equation}
    K_2\;|x,y\rangle= \left(1-\frac{y}{1-x}\right)\;|x,y\rangle,
\end{equation}
in light of \eqref{ident2l}. It follows according to the property \eqref{overJac} of the algebra $\mathfrak{J}_1$ we used many times, that the overlaps between these two eigenbases are given in this context by Jacobi polynomials with parameters $(\alpha, \beta)=(c,b)$ and variable given by the eigenvalues of $K_2$, precisely:
    \begin{equation}
       \langle x',k';b,c\;|\;x,y\rangle = \delta (x-x')\; \Tilde{C}_{k'}(x,y)\; J_{k'}^{(c,b)}\left(1-\frac{y}{1-x} \right), \label{overint1alt}
    \end{equation}
where $\Tilde{C}_k(x,y)$ would be chosen to ensure the orthonormalization of the basis $\{|\;x,k;b,c\rangle\}$. At first glance the expression above does not appear to coincide with the formula \eqref{overint1} for $ \langle x',k';b,c\;|\;x,y\rangle$. However, in light of definition \eqref{defJac1} and of the Kummer transformation formula \eqref{Kummer}, it is checked that
\begin{equation}
    J_n^{(a,b)}(1-x) = (-1)^n \; J_n^{(b,a)}(x). \label{1-x}
\end{equation}
In view of this observation, it is readily seen that equation \eqref{overint1alt} is turned into \eqref{overint1} under the change of variable $1-\frac{y}{1-x} \rightarrow \frac{y}{1-x }$ (since the choice of the normalizing factor must follow). As expected, the left-hand side of the pentagon of Figure \ref{fig:4}, will lead in the end, to the standard two-variable Jacobi polynomials $J_{n,k}^{(a,b,c)}(x,y)$. With this settled, there remains to see if the right-hand side corresponds to the polynomials $J_{n,k}^{(b,a,c)}(y,x)$. We turn to that question now using the same approach as before and hence proceeding swiftly.

\subsubsection{The right-hand side of Figure \ref{fig:4}}

New orthonormalized bases are defined as joint eigenvectors of the commuting elements attached to the vertices on the right side of the pentagon of Figure \ref{fig:4}, namely
\begin{enumerate}[leftmargin=*]
    \item The eigenvectors $|\;n,k;a,b,c\rangle^\sigma$ of the bottom (right) corner are defined by:
    \begin{subequations}\label{sigmabt}
          \begin{align} 
        &L\;|\;n,k;a,b,c\rangle^\sigma=-n(n+a+b+c+2)\;|\;n,k;a,b,c\rangle^\sigma, \label{sigmab1}\\
        &L_2\;|\;n,k;a,b,c\rangle^\sigma=-k(k+a+c+1)\;|\;n,k;a,b,c\rangle^\sigma.\label{sigmab2}
    \end{align}
    \end{subequations}
    \item The eigenvectors $|\;y,k;a,c\rangle^\sigma$ of the middle (right) vertex are defined by the eigenvalue equations:
    \begin{subequations}\label{sigmamt}
          \begin{align}
          &L_2\;|\;y,k;a,c\rangle^\sigma=-k(k+a+c+1)\;|\;y,k;a,c\rangle^\sigma, \label{sigmam1}\\
          &X_2\;|\;y,k;a,c\rangle^\sigma=y\;|\;y,k;a,c\rangle^\sigma. \label{sigmam2}
    \end{align}
    \end{subequations}
\end{enumerate}
We want to obtain the overlaps ${}^{\scriptstyle \sigma}\langle n,k;a,b,c \mid x,y\rangle$ between the top and bottom basis vectors by using the resolution of identity involving the middle basis, i.e.,
\begin{align}
    &{}^{\scriptstyle \sigma}\langle n,k;a,b,c \mid x,y\rangle = \nonumber \\
    &\quad \sum_{k'} \int_0^1 \! dy' \; 
    {}^{\scriptstyle \sigma}\langle n,k;a,b,c \mid  y', k';a,c\rangle ^{\sigma}\,
    {}^{\scriptstyle \sigma}\langle y', k';a,c \mid x, y\rangle. \label{conv3}
\end{align}
This computation will amount, in this case, to determining the overlaps between the $\mathfrak{J}_1$-bases defined by the eigenvectors of the generators at the boundaries of the segments of Figure \ref{fig:5} below which reproduces the right-hand side of the pentagon of Figure \ref{fig:4}.
\begin{figure}[ht] 
\centering
\begin{tikzpicture}[scale=1.5]
  \coordinate (A) at (0,0);
  \coordinate (B) at (2,0);
  \coordinate (C) at (4,0);

  \fill (A) circle (2pt);
  \fill (B) circle (2pt);
  \fill (C) circle (2pt);

  \draw (A) -- (B) -- (C);

  \node at ($(A)+(0,0.4)$) {\(X_2,\,X_1\)};
  \node at ($(B)+(0,0.4)$) {\(L_2,\,X_2\)};
  \node at ($(C)+(0,0.4)$) {\(L,\,L_2\)};
\end{tikzpicture}
\caption{The right hand side of the pentagon of Figure \ref{fig:4} that connects the $\{|x,y\rangle\}$ basis to the $\{|n,k;a,b,c\rangle^\sigma\}$ basis via the $\{|(y,k;a,c\rangle^\sigma\}$ basis.\label{fig:5}}
\end{figure}
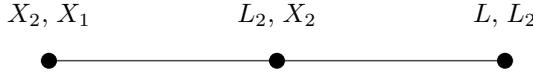
\begin{pr}\label{pro:4}
     Let $\{|x,y\rangle| 0\le x\le 1,0\le y\le 1-x\}$ and $\{|n,k;a,b,c\rangle^{\sigma}|n,k\in\mathbb{N}, 0\le k\le n\}$ be the two representation bases of $\mathfrak{J}_2$ defined by equations \eqref{u} and \eqref{sigmabt}. The overlaps between the vectors of these two bases are given by
     \begin{equation}
         {}^{\scriptstyle \sigma}\langle n,k;a,b,c \mid x,y\rangle = \sqrt{\frac{x^a y^b (1-x-y)^c}{N_k^{(a,c)}N_{n-k}^{(b,a+c+2k+1)}}}J_{n,k}^{(b,a,c)}(y,x). \label{eqprop4}
     \end{equation}
     The expression for this overlap is obtained from the overlap $\langle n,k;a,b,c\;|x,y\rangle$ given in Proposition \ref{prop 1} by performing simultaneously the permutations:
     \begin{equation}
         x \leftrightarrow y, \qquad a \leftrightarrow b.
     \end{equation}
\end{pr}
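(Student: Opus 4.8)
The plan is to follow verbatim the template of Propositions \ref{prop 1} and \ref{pro:2}, inserting the resolution of the identity \eqref{conv3} built on the intermediate basis $\{|y,k;a,c\rangle^\sigma\}$ and reading off each factor from the rank one Jacobi subalgebras sitting on the two edges of Figure \ref{fig:5}. The top edge joins the bases diagonalizing the commuting pairs $\{X_1,X_2\}$ and $\{L_2,X_2\}$; since both share the central element $X_2$, the non-shared generators $X_1$ and $L_2$ generate the centralizer $C_{X_2}(\mathfrak{J}_2)$. The bottom edge joins $\{L_2,X_2\}$ and $\{L,L_2\}$, which share the now-central $L_2$, so the non-shared generators $X_2$ and $L$ generate $C_{L_2}(\mathfrak{J}_2)$. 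The first task is to recognize both of these as (centrally extended) copies of $\mathfrak{J}_1$. This can be done exactly as in Subsections \ref{subs:X1} and \ref{sub3.1} — either by computing the commutation relations of the relevant pairs directly from the defining relations of $\mathfrak{J}_2$ and the definitions \eqref{def2}, in the manner already carried out for $L_1,X_2$ in \eqref{L1X2L1tt}, or by observing that these two subalgebras are the images of $C_{X_1}(\mathfrak{J}_2)$ and $C_{L_1}(\mathfrak{J}_2)$ under the cyclic relabelling $X_1\to X_2\to X_3\to X_1$, $L_1\to L_2\to L_3\to L_1$ accompanied by $a\to b\to c\to a$.

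For the top edge I would set $K_1=L_2$ and $K_2=X_1/(X_2-I)+I$, the image of the identification \eqref{X1central}, with parameters $(\alpha,\beta)=(c,a)$. Acting on $|x,y\rangle$ one finds, using \eqref{u} and \eqref{def2},
\[
K_2\,|x,y\rangle=\Bigl[\tfrac{X_1}{X_2-I}+I\Bigr]|x,y\rangle=\frac{1-x-y}{1-y}\,|x,y\rangle,
\]
while $K_1=L_2$ has eigenvalue $-k(k+a+c+1)$ on the middle basis by \eqref{sigmam1}, matching $-k(k+\alpha+\beta+1)$ for $(\alpha,\beta)=(c,a)$ at degree $k$. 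Hence \eqref{overJac} yields a univariate Jacobi polynomial $J_{k}^{(c,a)}\!\bigl((1-x-y)/(1-y)\bigr)$. The genuinely new point relative to Proposition \ref{prop 1} — and the step I expect to be the main obstacle — is that this argument is the reflection $1-x/(1-y)$ of the one required: I would invoke \eqref{1-x} to rewrite $J_{k}^{(c,a)}\bigl(1-\tfrac{x}{1-y}\bigr)=(-1)^{k}J_{k}^{(a,c)}\bigl(\tfrac{x}{1-y}\bigr)$ and absorb the phase into the normalization fixed by orthonormality of $\{|y,k;a,c\rangle^\sigma\}$, exactly as the authors did for the left-hand side of Figure \ref{fig:4}. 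Mirroring \eqref{Ck}, this produces
\[
{}^{\scriptstyle \sigma}\langle y',k';a,c\mid x,y\rangle=\delta(y-y')\,\sqrt{\tfrac{1}{N_{k'}^{(a,c)}}\tfrac{x^{a}(1-x-y)^{c}}{(1-y)^{a+c+1}}}\;J_{k'}^{(a,c)}\!\Bigl(\tfrac{x}{1-y}\Bigr).
\]

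For the bottom edge I would take $K_1=L+k(k+a+b+c+2)I$ and $K_2=X_2$ with $(\alpha,\beta)=(b,a+c+2k+1)$, the analogue of the identification used in Subsection \ref{sub3.1}. The eigenvalue computation $K_1|n,k;a,b,c\rangle^\sigma=-(n-k)(n+k+a+b+c+2)|n,k;a,b,c\rangle^\sigma$, from \eqref{sigmab1}--\eqref{sigmab2}, fixes the degree at $n-k$, and $K_2=X_2$ has eigenvalue $y$, so \eqref{overJac} gives $J_{n-k}^{(b,a+c+2k+1)}(y)$ with the normalization dictated, as in \eqref{Ank}, by orthonormality, namely $\sqrt{y^{b}(1-y)^{a+c+2k+1}/N_{n-k}^{(b,a+c+2k+1)}}$. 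Substituting both edge overlaps into \eqref{conv3}, the factor $\delta(y-y')$ performs the $y'$ integral and $\delta_{k,k'}$ collapses the sum; collecting the powers of $(1-y)$ into a single $(1-y)^{k}$ and simplifying the weights leaves precisely
\[
\sqrt{\tfrac{x^{a}y^{b}(1-x-y)^{c}}{N_{k}^{(a,c)}N_{n-k}^{(b,a+c+2k+1)}}}\;J_{n-k}^{(b,a+c+2k+1)}(y)\,(1-y)^{k}\,J_{k}^{(a,c)}\!\Bigl(\tfrac{x}{1-y}\Bigr).
\]
Recognizing the last three factors as $J_{n,k}^{(b,a,c)}(y,x)$ through the definition \eqref{defJac2} establishes \eqref{eqprop4}, and the asserted permutation statement follows at once by comparison with Proposition \ref{prop 1} under $x\leftrightarrow y$, $a\leftrightarrow b$. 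Beyond the reflection bookkeeping on the top edge, the remaining work is the routine verification that $C_{X_2}(\mathfrak{J}_2)$ and $C_{L_2}(\mathfrak{J}_2)$ satisfy the relations \eqref{rank1} with the parameters claimed.
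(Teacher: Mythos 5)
Your proposal is correct and follows essentially the same route as the paper's own proof: the same intermediate basis $\{|y,k;a,c\rangle^\sigma\}$ and resolution of identity \eqref{conv3}, the same identification of the two edges of Figure \ref{fig:5} with the rank one Jacobi subalgebras $C_{X_2}(\mathfrak{J}_2)$ and $C_{L_2}(\mathfrak{J}_2)$ (with the same $K_1$, $K_2$ and parameters $(c,a)$ and $(b,a+c+2k+1)$), and the same use of the reflection \eqref{1-x} to convert $J_k^{(c,a)}\bigl(1-\tfrac{x}{1-y}\bigr)$ into $J_k^{(a,c)}\bigl(\tfrac{x}{1-y}\bigr)$. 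The only cosmetic differences are that you offer a cyclic-relabelling shortcut as an alternative to the paper's explicit substitutions from \eqref{L3X1L3tt} and \eqref{LX3Ltt}, and you write the central shift in $K_1$ as $k(k+a+b+c+2)$, which is indeed what the eigenvalue computation requires.
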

This proposition establishes the result we set out to obtain so as to provide an algebraic interpretation of the two-variable Jacobi polynomials that encompasses the symmetries of the triangle and includes the three families of these functions introduced by Dunkl \cite{dunkl1984orthogonal}. Let us now complete its derivation.
\begin{proof}
(i) In order to obtain the overlaps between the eigenvectors $|x,y\rangle$ of $X_1$ and those, $|y,k;a,c\rangle^\sigma$ of $L_2$, we need to determine the algebra that these generators realize knowing that they centralize $X_2$. Recalling that $L_2=L-L_1-L_3$ (see \eqref{def2}), one finds from the defining relations of $\mathfrak{J}_2$ given in Appendix \ref{APP A} that $X_1$ and $L_2$ verify:
\begin{subequations}\label{L2X1L2tt}
    \begin{align}
    [[L_2,X_1],L_2]&=\{2X_1+X_2-I,L_2\} 
 -(a+c)\left((a+1)(X_1+X_2-I)+(c+1)X_1\right), \label{L2X1L2t}\\
    [[L_2,X_1],X_1]&=-2X_1(X_1+X_2-I). \label{L2X1X1t}
\end{align}
\end{subequations}
These commutation relations can be obtained from those, given in \eqref{L3X1L3tt}, of the centralizer $\mathfrak{C}_{X_3}(\mathfrak{J}_2)$ under the substitution
\begin{equation}
    L_3 \rightarrow L_2, \quad X_3 \rightarrow X_2; \qquad b \rightarrow c.
\end{equation}
Given the analysis of Subsection \ref{subs:X3}, upon performing these replacements in \eqref{JacsubL3X1} and \eqref{parCX3} we conclude that $L_2$ and $X_1$ realize the $\mathfrak{J}_1$ relations \eqref{rank1} with
\begin{equation}
    K_1=L_2, \quad K_2=1-\frac{X_1}{1-X_2}; \qquad \alpha = c, \quad \beta=a.
\end{equation}
It then follows from \eqref{overJac} that
\begin{equation}
    {}^{\scriptstyle \sigma}\langle y', k';a,c \mid x, y\rangle = \delta (y-y')\; E_{k'}(x,y) \;J_{k'}^{(c,a)}\left(1-\frac{x}{1-y}\right),
\end{equation}
with $E_k(x,y)$ such that ${}^{\scriptstyle \sigma}\langle y', k';a,c \;|\;y'', k'';a,c\rangle ^{\sigma} = \delta _{k',k''} \delta (y'-y'')$. Using the Kummer transformation \eqref{1-x} and repeating computations that we have done before to determine $E_k(x,y)$ we arrive at 
\begin{equation}
        {}^{\scriptstyle \sigma}\langle y', k';a,c \mid x, y\rangle = \delta (y-y')\; \sqrt{\frac{1}{N_k'^{(a,c)}}\frac{x^a(1-x-y)^c}{(1-y)^{a+c+1}}} \;J_{k'}^{(a,c)}\left(\frac{x}{1-y}\right). \label{oveL1X2}
\end{equation}
(ii) We need next to determine the overlaps ${}^{\scriptstyle \sigma}\langle n,k;a,b,c \mid  y', k';a,c\rangle ^{\sigma}$ between the eigenstates of $L$ and $X_2$. To that end we must identify the algebra generated by these two elements that commute with $L_2$. From the defining relations of $\mathfrak{J}_2$  given in the Appendix \ref{APP A}, we find the following commutation relations between $L$ and $X_2$:
\begin{subequations}\label{LX2Ltt}
    \begin{align}
    [[L,X_2]\;,L\;]&=2\{ X_2,L\}-2L+2 L_2
    -(a+b+c+1)((a+b+c+3)X_2 - (b+1)I),\label{LX2Lt}\\
    [[L,X_2],X_2]&=-2X_2^2+2X_2.\label{LX2X2t}
\end{align}
\end{subequations}
These can be obtained from the relations \eqref{LX3Ltt}  for the centralizer $\mathfrak{C}_{L_3}(\mathfrak{J}_2)$ under the changes:
\begin{equation}
    X_3 \rightarrow X_2, \quad L_3 \rightarrow L_2; \qquad c \rightarrow b, \quad b \rightarrow c. \label{substCL3toCL2}
\end{equation}
From the study performed in Section \ref{subs:L3} of $\mathfrak{C}_{L_3}(\mathfrak{J}_2)$, we may thus infer that $L$ and $X_2$ also form a rank one Jacobi algebra. Applying the substitution \eqref{substCL3toCL2} to the equations \eqref{idenCL3} and \eqref{parCL3} carries the identification with the presentation \eqref{rank1} of $\mathfrak{J}_1$ which yields on the one hand, that the central $L_2$ is consistently given by $L_2=-k(k+a+c+1)$ on the relevant irreducible $\mathfrak{J}_1$-module and, on the other hand, that
\begin{equation}
    K_1=L+k(k+a+b+c), \quad K_2=X_2; \qquad \alpha=b, \quad \beta=2k+a+c+1.
\end{equation}
With these observations and imposing the orthonormality of the elements of the set $\{|\;n,k;a,b,c\rangle ^{\sigma}\}$, we may write that the overlaps between these basis vectors and $|y', k'; a,c\rangle^{\sigma}$ are
\begin{equation}
    {}^{\scriptstyle \sigma}\langle n,k;a,b,c \;|\;y',k';a,c\rangle ^{\sigma} = \delta_{k,k'} \; \sqrt{\frac{(y')^b (1-y')^{a+c+2k+1}}{N_{n-k}^{(b,a+c+2k+1)}}}\;J_{n-k}^{(b, a+c+2k+1)} (y'). \label{sigma-int}
\end{equation}
(iii) Inserting now, \eqref{oveL1X2} and \eqref{sigma-int} in \eqref{conv3} gives 
\begin{align}
       &{}^{\scriptstyle \sigma}\langle n,k;a,b,c \mid x,y\rangle = \nonumber \\
       &\sqrt{\frac{x^a y^b (1-x-y)^c}{N_k^{(a,c)}N_{n-k}^{(b,a+c+2k+1)}}}\;J_{n-k}^{(b, a+c+2k+1)} (y)\; (1-y)^k \;J_{k}^{(a,c)}\left(\frac{x}{1-y}\right), \label{sigmaJ}
\end{align}
which confirms Proposition \ref{pro:4} in light of the definition \eqref{defJac2} of the two-variable Jacobi polynomials.
\end{proof}

The overlaps of Proposition \ref{pro:4} are decomposed into those of Proposition \ref{prop 1} according to the formula 
\begin{equation}
   {}^{\scriptstyle \sigma}\langle n,\ell;a,b,c\;|x, y\;\rangle=
   \sum_{m=0}^{n} {}^{\scriptstyle \sigma}\langle n,\ell;a,b,c\;|\;n, m; a, b, c\;\rangle \langle \;n, m;a, b, c\;|\;x, y \rangle, \label{expansion} 
\end{equation}
which involves the overlaps ${}^{\scriptstyle \sigma}\langle n,\ell;a,b,c\;|\;n, m; a, b, c\;\rangle$ between the bases attached to the bottom of Figure \ref{fig:4} that respectively diagonalize $L_2$ and $L_1$, see \eqref{sigmab2} and \eqref{bl1} with eigenvalues $-\ell(\ell+a+c+1)$ and $-m(m+b+c+1)$.
As is familiar, these matrix elements can be obtained from the identification of the rank one algebra generated by $L_1$ and $L_2$ which is expected to be of Racah type. Indeed, it is confirmed with the help of the defining relations of $\mathfrak{J}_2$ given in Appendix \ref{APP A}, that $L_1$ and $L_2=L-L_1-L_3$ obey
\begin{subequations}\label{L1L2L1tt}
    \begin{align}
[[L_1,L_2],L_1]&=2\{L_1,L_2\}+2L_1^2-2L_1L 
    +(b+c)(c+1)(L-L_1-L_2) \nonumber\\&
    - (b+c)(c+1)L_2-(c-b)(a+1)L_1, \label{L1L2L1t}\\
[[L_1,L_2],L_2]&=-2\{L_1,L_2\}-2L_2^2+2L_2L  
    -(a+c)(c+1)(L-L_1-L_2) \nonumber\\&
    + (a+c)(a+1)L_1+(c-a)(b+1)L_2, \label{L1L2L2t}
 \end{align}
\end{subequations}
These commutation relations can be obtained from those of the centralizer $C_L(\mathfrak{J}_2)$ of $L$ given in equations \eqref{L1L3L1tt} under the substitution 
\begin{equation}
    L_3 \rightarrow L_2; \qquad c \rightarrow b, \quad b \rightarrow c,
\end{equation}
that has been pervasive in the analysis of this section.
We can thus avail ourselves of the identification of $C_L(\mathfrak{J}_2)$ performed in Subsection \ref{subs:L} to conclude that $L_1$ and $L_2$ form the algebra $\mathfrak{R}_1$ with the relations \eqref{L1L2L1tt} mapped onto \eqref{Ract} by setting $K_1=L_1$ and $K_2=L_2$ and the parameters $\xi, \;\eta_1,\; \eta_2,\; \zeta_1,\; \zeta_2$ obtained from \eqref{xiJac}, \eqref{eta1Jac}, \eqref{eta2Jac}, \eqref{zeta1Jac}, \eqref{zeta2Jac} upon the substitutions $c \rightarrow b$ and $b \rightarrow c$. Mindful of these parameter exchanges, the argumentation of Section \ref{sec:over} can then be repeated to find that
\begin{align}
     {}^{\scriptstyle \sigma}\langle n,\ell;a,b,c\;|\;n,m;a,b,c\rangle &= 
    (-1)^m  \sqrt{\frac{w^{(c,b,-n-1,n+1+a+c)}(\ell)}{M_m^{(c,b,-n-1,n+1+a+c)}}}R_m\left(\lambda^{(a,c)}(\ell); c,b,-n-1, n+1+a+c\right), \nonumber \\
  &=(-1)^m \;S_m^{(c, b, -n-1, n+1+a+c)}(\ell)
    ,\label{21over}
\end{align} 
and to make explicit the expansion \eqref{expansion}. The corollaries analogous to those presented in Section \ref{sec:over}, such as Proposition \ref{pro:3}, can be derived in a similar fashion in this case. We have thus shown how the algebraic interpretation of the two-variable Jacobi polynomials meshes with the triangle symmetries. We shall dwell more on these transformations next.
\subsection{Reflections and rotations}
Let us denote, in counterclockwise order, by $a, b, c$,  the three vertices of an equilateral triangle. In addition to the identity $e$, the symmetry transformations of this figure are:
\begin{itemize}
    \item The reflection $\pi$: $a \leftrightarrow c, b \leftrightarrow b$;
    \item The reflection $\sigma$: $a \leftrightarrow b, c \leftrightarrow c$;
    \item The reflection $\tau$: $a \leftrightarrow a, b \leftrightarrow c$;
    \item The rotation by $\frac{2\pi}{3} , R(\frac{2\pi}{3})$: $a \rightarrow b, b \rightarrow c, c \rightarrow a$;
    \item The rotation by $\frac{4\pi}{3} , R(\frac{4\pi}{3})$: $a \rightarrow c, b \rightarrow a, c \rightarrow b$.
\end{itemize}

\noindent It is well known that these operations form the dihedral group $D_3$ which is isomorphic to the symmetric group $S_3$. The Cayley multiplication table is:

\smallskip

\[
\begin{array}{c|cccccc}
      & e & \pi & \sigma & \tau & R\!\left(\tfrac{2\pi}{3}\right) & R\!\left(\tfrac{4\pi}{3}\right) \\
\hline
e     & e & \pi & \sigma & \tau & R\!\left(\tfrac{2\pi}{3}\right) & R\!\left(\tfrac{4\pi}{3}\right) \\
\pi   & \pi & e & R\!\left(\tfrac{2\pi}{3}\right) & R\!\left(\tfrac{4\pi}{3}\right) & \sigma & \tau \\
\sigma& \sigma & R\!\left(\tfrac{4\pi}{3}\right) & e & R\!\left(\tfrac{2\pi}{3}\right) & \tau & \pi \\
\tau  & \tau & R\!\left(\tfrac{2\pi}{3}\right) & R\!\left(\tfrac{4\pi}{3}\right) & e & \pi & \sigma \\
R\!\left(\tfrac{2\pi}{3}\right) & R\!\left(\tfrac{2\pi}{3}\right) & \tau & \pi & \sigma & R\!\left(\tfrac{4\pi}{3}\right) & e \\
R\!\left(\tfrac{4\pi}{3}\right) & R\!\left(\tfrac{4\pi}{3}\right) & \sigma & \tau & \pi & e & R\!\left(\tfrac{2\pi}{3}\right) \\
\end{array}
\]


\subsubsection{Action of $D_3$ on the two-variable Jacobi polynomials $J_{n,k}^{(a,b,c)}(x,y)$} In determining the actions on these polynomials of the reflections and rotations given above, we shall keep in mind their definition given in \eqref{defJac2} and remember that the transformations of the variables $x, y, 1-x-y$ follow those of the parameters $a, b, c$ through the pairings $(a,x), (b,y), (c, 1-x-y)$ as was done before.

Let us first observe that the polynomials $J_{n,k}^{(a,b,c)}(x,y)$ are only affected by a phase under the transposition $\tau$:
\begin{lemma}\label{lem:61} The polynomials $J_{n,k}^{(a,b,c)}(x,y)$ vary by a phase under the reflection that exchanges the indices $2$ and $3$:
    \begin{align}
    \tau \circ J_{n,k}^{(a,b,c)}(x,y) &= J_{n,k}^{(a,c,b)}(x,1-x-y) \nonumber \\
    &= (-1)^k J_{n,k}^{(a,b,c)}(x,y). \label{tauJ}
    \end{align}
\end{lemma}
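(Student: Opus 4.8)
The plan is to substitute the defining factorization \eqref{defJac2} directly and to track the effect of $\tau$ on each of its three factors, recalling that $\tau$ couples the parameter exchange $b \leftrightarrow c$ with the variable exchange $y \leftrightarrow 1-x-y$ (the latter dictated by the pairing $(c,1-x-y)$). First I would write
\[
\tau \circ J_{n,k}^{(a,b,c)}(x,y) = J_{n,k}^{(a,c,b)}(x,1-x-y) = J_{n-k}^{(a,c+b+2k+1)}(x)\,(1-x)^k\,J_k^{(c,b)}\!\left(\tfrac{1-x-y}{1-x}\right).
\]
The first two factors are immediately invariant: the parameter $c+b+2k+1$ equals $b+c+2k+1$, so $J_{n-k}^{(a,b+c+2k+1)}(x)$ is untouched, while $(1-x)^k$ involves neither $b$, $c$, nor $y$. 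Consequently all of the content of the lemma is carried by the last factor.

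For that factor I would set $z = \frac{y}{1-x}$ and observe that $\frac{1-x-y}{1-x} = 1-z$, so the remaining task reduces to establishing $J_k^{(c,b)}(1-z) = (-1)^k J_k^{(b,c)}(z)$. This is precisely the single-variable reflection identity \eqref{1-x} applied with $n \to k$, the parameter pair $(a,b) \to (c,b)$, and $x \to z$; since \eqref{1-x} was itself obtained from the Kummer transformation \eqref{Kummer}, it is already available. Invoking it replaces $J_k^{(c,b)}(1-z)$ by $(-1)^k J_k^{(b,c)}\!\left(\frac{y}{1-x}\right)$, and reassembling the three factors and comparing with \eqref{defJac2} gives $(-1)^k J_{n,k}^{(a,b,c)}(x,y)$, as claimed.

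There is no genuinely hard step here: the sole non-trivial input is the reflection identity \eqref{1-x}, established earlier, and the decisive structural point is that the symmetric combination $b+c$ renders the first Jacobi factor insensitive to the swap $b \leftrightarrow c$, so that the entire phase is generated by the univariate factor $J_k^{(b,c)}$. The only place calling for a little care is the bookkeeping of the substitution $y \mapsto 1-x-y$ inside the rescaled argument, namely checking that it yields exactly $1 - \frac{y}{1-x}$ rather than some other affine image; once this is confirmed the lemma is immediate.
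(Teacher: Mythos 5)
Your proposal is correct and follows essentially the same route as the paper: substitute the definition \eqref{defJac2}, note that the first two factors are invariant under $b \leftrightarrow c$, rewrite the argument of the last factor as $1-\tfrac{y}{1-x}$, and apply the reflection identity \eqref{1-x} to produce the phase $(-1)^k$. The paper's proof is just a terser version of the same computation.
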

\begin{proof}
    From the definition \eqref{defJac2} we have that:
    \begin{align}
        J_{n,k}^{(a,c,b)}(x,1-x-y)&=J_{n-k}^{(a, b+c+2k+1)}(x) \; (1-x)^k \; J_k^{(c,b)}\left(\frac{1-x-y}{1-x}\right) \nonumber \\
        &= J_{n-k}^{(a, b+c+2k+1)}(x)\;(1-x)^k \;J_k^{(c,b)}\left(1-\frac{y}{1-x}\right),
    \end{align}
   which implies \eqref{tauJ} using \eqref{1-x}.
\end{proof}
We have introduced up to now the two additional families
\begin{align}
\pi \circ J_{n,k}^{(a,b,c)}(x,y)&=J_{n,k}^{(c,b,a)}(1-x-y,y); \\
\sigma \circ J_{n,k}^{(a,b,c)}(x,y)&=J_{n,k}^{(b,a,c)}(y,x),
    \end{align}
that appear in particular in Proposition \ref{pro:2} and Proposition \ref{pro:4}. Let us now observe that the actions of the rotations $R(\frac{2 \pi}{3})$ and $R(\frac{4 \pi}{3})$ do not introduce new sets of two-variable Jacobi polynomials. 

\begin{lemma} The actions of the rotations $R(\frac{2 \pi}{3})$ and $R(\frac{4 \pi}{3})$ on 
 $J_{n,k}^{(a,b,c)}(x,y)$ yield up to a sign the polynomial bases that are obtained from the action of $\sigma$ and $\pi$ respectively:
 \begin{align}
     &R(\frac{2\pi}{3}) \circ J_{n,k}^{(a,b,c)}(x,y)=(-1)^k \; \sigma \circ J_{n,k}^{(a,b,c)}(x,y); \label{2pi}\\
     &R(\frac{4\pi}{3}) \circ J_{n,k}^{(a,b,c)}(x,y)=(-1)^k \; \pi \circ J_{n,k}^{(a,b,c)}(x,y). \label{4pi}
 \end{align}
\end{lemma}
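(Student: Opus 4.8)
The plan is to reduce both rotation identities to the single transposition $\tau$, whose only effect on $J_{n,k}^{(a,b,c)}(x,y)$ is multiplication by the scalar $(-1)^k$, as established in Lemma \ref{lem:61}. The dihedral group acts by relabelling the three pairs $(a,x)$, $(b,y)$, $(c,z)$ with $z=1-x-y$ simultaneously in the parameters and in the variables; this substitution action produces no sign on its own, respects composition, and leaves the indices $n,k$ untouched. Since, at the level of the triangle, $R(\frac{2\pi}{3})=\sigma\tau$ and $R(\frac{4\pi}{3})=\pi\tau$ with $\tau$ acting first (innermost), the phase $(-1)^k$ that $\tau$ contributes on the base polynomial factors out of the subsequent reflection, which is precisely \eqref{2pi}--\eqref{4pi}.

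To keep the argument free of any composition-order ambiguity I would instead carry out the two rotations explicitly on the base family. Applying the cyclic relabelling of $(a,b,c)$ together with the matching relabelling of $(x,y,z)$ dictated by the pairings, and reading off the first two variable slots, one obtains
\[
R(\tfrac{2\pi}{3})\circ J_{n,k}^{(a,b,c)}(x,y)=J_{n,k}^{(b,c,a)}(y,1-x-y),\qquad R(\tfrac{4\pi}{3})\circ J_{n,k}^{(a,b,c)}(x,y)=J_{n,k}^{(c,a,b)}(1-x-y,x).
\]

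It then remains to recognise each right-hand side as $(-1)^k$ times $\sigma\circ J_{n,k}^{(a,b,c)}$ and $\pi\circ J_{n,k}^{(a,b,c)}$ respectively. For this I would invoke Lemma \ref{lem:61} in the form valid for arbitrary labels, namely $J_{n,k}^{(p,r,q)}(u,1-u-v)=(-1)^k J_{n,k}^{(p,q,r)}(u,v)$, which follows verbatim from \eqref{defJac2} and \eqref{1-x} since the proof of Lemma \ref{lem:61} never uses the specific symbols $a,b,c$. The three coordinates of $J_{n,k}^{(b,c,a)}(y,1-x-y)$ are $(y,1-x-y,x)$, so it is obtained from $J_{n,k}^{(b,a,c)}(y,x)$ (coordinates $(y,x,1-x-y)$) by exchanging the last two parameter and variable slots; the identity above then gives $J_{n,k}^{(b,c,a)}(y,1-x-y)=(-1)^k J_{n,k}^{(b,a,c)}(y,x)=(-1)^k\,\sigma\circ J_{n,k}^{(a,b,c)}(x,y)$, which is \eqref{2pi}. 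Likewise $J_{n,k}^{(c,a,b)}(1-x-y,x)$ has coordinates $(1-x-y,x,y)$ and arises from $J_{n,k}^{(c,b,a)}(1-x-y,y)$ by the same last-two exchange, yielding $(-1)^k\,\pi\circ J_{n,k}^{(a,b,c)}(x,y)$, which is \eqref{4pi}.

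The one point demanding care, and the step I expect to be the main obstacle, is exactly where the phase $(-1)^k$ may be extracted. The scalar of Lemma \ref{lem:61} is attached to the \emph{positions} $2,3$ of the coordinate triple, not to the symbols $b,c$; on the base family these coincide, but on an already-transformed family they need not. Consequently the composition shortcut is legitimate only when $\tau$ is applied to the untransformed polynomial, i.e. through $R(\frac{2\pi}{3})=\sigma\tau$ and $R(\frac{4\pi}{3})=\pi\tau$ with $\tau$ innermost; the equally valid group relations $R(\frac{2\pi}{3})=\tau\pi$ and $R(\frac{4\pi}{3})=\tau\sigma$ place $\tau$ outermost and would invite the false conclusion that the phase multiplies $\pi\circ J$ rather than $\sigma\circ J$ (and conversely). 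This is why I favour computing the two rotation images directly and then applying Lemma \ref{lem:61} a single time, which sidesteps the ordering issue entirely; the relations $R(\frac{2\pi}{3})=\sigma\tau$ and $R(\frac{4\pi}{3})=\pi\tau$ then serve as a consistency check against the Cayley table.
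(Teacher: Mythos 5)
Your proof is correct and follows essentially the same route as the paper's: compute the rotation image explicitly as $J_{n,k}^{(b,c,a)}(y,1-x-y)$ (resp.\ $J_{n,k}^{(c,a,b)}(1-x-y,x)$) and then extract the sign $(-1)^k$ from the inner univariate factor via the reflection identity \eqref{1-x}; your packaging of that last step as a relabelled Lemma \ref{lem:61} is exactly the computation the paper performs inline through \eqref{defJac2}. Your additional remark about the order of composition in $R(\tfrac{2\pi}{3})=\sigma\tau=\tau\pi$ is a sensible caution but is not needed once the direct computation is carried out.
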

\begin{proof}
    Consider the action of $R(\frac{2\pi}{3})$:
    \begin{align}
        R(\frac{2\pi}{3}) \circ J_{n,k}^{(a,b,c)}(x,y)&=J_{n,k}^{(b,c, a)}(y,1-x-y)\nonumber \\
        &=J_{n-k}^{(b,a+c+2k+1)}(y)\;(1-y)^k \;J_k^{(c,a)} \left(\frac{1-x-y}{1-y} \right)\nonumber\\
        &=(-1)^k \;J_{n-k}^{(b,a+c+2k+1)}(y)\;(1-y)^k \;J_k^{(a,c)} (\frac{x}{1-y}),
    \end{align}
    which confirms \eqref{2pi} in view of \eqref{sigmaJ}. The proof of \eqref{4pi} proceeds in a similar fashion.
\end{proof}

\subsection{Revisiting the expansions between the polynomials $J_{n,k}^{(a,b,c)}(x,y)$, $ \pi \circ J_{n,k}^{(a,b,c)}(x,y)$, and $\sigma \circ J_{n,k}^{(a,b,c)}(x,y)$}

Let us return now to the change of basis matrices that connect the three bases of two-variable Jacobi polynomials obtained through the action of the triangle reflections. We shall determine how these can be derived from the one given in Proposition \ref{pro:4} using the dihedral transformations. In terms of the orthonormalized functions $S_m(\ell)$ defined in \eqref{orthfn}, one can rewrite as follows the result of Proposition \ref{pro:4} that provides the representation of the reflection $\pi$ in the basis $J_{n,k}^{(a,b,c)}(x,y)$:
\begin{align}
    &\frac{1}{\sqrt{N_{\ell}^{(b,a)}N_{n-\ell}^{(c,a+b+2\ell+1)}}}\; J_{n,\ell}^{(c,b,a)}\left[(1-x-y),y\right] =\nonumber \\
    &\sum_{m=0}^n\;(-1)^m \; S_m ^{(b,c,-n-1,n+1+a+b)}(\ell) \  \frac{1}{\sqrt{N_m^{(b,c)}N_{n-m}^{(a, b+c+2m+1)}}}\; J_{n,m}^{(a,b,c)}(x,y). \label{piexp}
\end{align}
We considered at the end of Subsection \ref{sec:bac} the relation between the polynomials $\sigma \circ J_{n,k}^{(a,b,c)}(x,y)$ and the original $J_{n,k}^{(a,b,c)}(x,y)$. It is obtained from \eqref{eqprop4}, \eqref{expansion} and \eqref{21over} and expressed in terms of the orthonormalized functions, it reads:
\begin{align}
    &\sigma \circ  \left[\frac{1}{\sqrt{N_{\ell}^{(b,c)}N_{n-\ell}^{(a, b+c+2\ell+1)}}}\; J_{n,\ell}^{(a,b,c)}(x,y)\right] =
    \frac{1}{\sqrt{N_{\ell}^{(a,c)}N_{n-\ell}^{(b,a+c+2\ell+1)}}}\; J_{n,\ell}^{(b,a, c)}(y, x) \nonumber \\
    &=\;\sum_{m=0}^n\;(-1)^m \; S_m ^{(c,b,-n-1,n+1+a+b)}(\ell) \  \frac{1}{\sqrt{N_m^{(b,c)}N_{n-m}^{(a, b+c+2m+1)}}}\; J_{n,m}^{(a,b,c)}(x,y). \label{sigmaexp}
\end{align}
As an outcome of the discussion of the triangle symmetries, it is seen that the above relation can also be derived from the expansion formula given in Proposition \ref{pro:4}.
\begin{cor}\label{cor:1}
    The equation \eqref{sigmaexp} that provides the representation of the reflection $\sigma$ in the basis ${J_{n,k}^{(a,b,c)}(x,y)}$ can naturally be obtained through symmetry methods, from the similar representation of the reflection $\pi$ given in Proposition \ref{pro:4} and reproduced in equation \eqref{piexp}.
\end{cor}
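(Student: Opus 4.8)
The plan is to obtain \eqref{sigmaexp} from \eqref{piexp} by conjugating with the reflection $\tau$, exploiting that $\pi$ and $\sigma$ are conjugate in $D_3$. With the three reflections realized as transpositions of the barycentric slots $(a,x),(b,y),(c,1-x-y)$ introduced in Subsection \ref{sec:bac}, one has $\pi=(13)$, $\sigma=(12)$, $\tau=(23)$, hence $\sigma=\tau\,\pi\,\tau$: the reflection $\sigma$ is the conjugate of $\pi$ by $\tau$. Writing $\widehat J_{n,k}^{(a,b,c)}:=J_{n,k}^{(a,b,c)}(x,y)\big/\sqrt{N_k^{(b,c)}N_{n-k}^{(a,b+c+2k+1)}}$ for the orthonormalized polynomials that appear in \eqref{piexp} and \eqref{sigmaexp}, the crucial input is Lemma \ref{lem:61}: the reflection $\tau$ acts \emph{diagonally} on this basis as the phase matrix $D=\mathrm{diag}\big((-1)^k\big)$, since $J_{n,k}^{(a,b,c)}\mapsto(-1)^k J_{n,k}^{(a,b,c)}$ while the normalization is unchanged by $b\leftrightarrow c$ (one has $N_k^{(b,c)}=N_k^{(c,b)}$ and $b+c$ is invariant).

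First I would apply the three factors of $\sigma=\tau\,\pi\,\tau$ to $\widehat J_{n,\ell}^{(a,b,c)}$, read from right to left. The inner $\tau$ gives the phase $(-1)^\ell$ and leaves the parameters untouched; the middle $\pi$ is evaluated by \eqref{piexp}, producing the $\pi$-transition coefficients built on the Racah factor $S_m^{(b,c,-n-1,n+1+a+b)}(\ell)$; and the outer $\tau$ does two things at once: on the scalar coefficients it transposes $b\leftrightarrow c$, sending $S_m^{(b,c,-n-1,n+1+a+b)}(\ell)$ to $S_m^{(c,b,-n-1,n+1+a+c)}(\ell)$—the shift $n+1+a+b\mapsto n+1+a+c$ turning the spectral variable $\lambda^{(a,b)}$ into $\lambda^{(a,c)}$, which is exactly the $\sigma$-coefficient of \eqref{21over}—while on each basis vector $\widehat J_{n,m}^{(a,b,c)}$ it supplies a further phase $(-1)^m$. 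In terms of the $\pi$- and $\sigma$-transition matrices $P$ and $\Sigma$ this reads, schematically, $\Sigma(a,b,c)=D\,P(a,c,b)\,D$: the $\sigma$-matrix is the $\pi$-matrix with $b\leftrightarrow c$, conjugated by the phase matrix $D$. This is the structural content of the corollary and it identifies \eqref{sigmaexp} as the $\tau$-conjugate of \eqref{piexp}.

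The hard part is the phase bookkeeping. Three signs enter—$(-1)^\ell$ from the inner $\tau$, the $(-1)^m$ already present in \eqref{piexp}, and $(-1)^m$ from the outer $\tau$—and one must check that their product, under the phase conventions fixed for the bases $\{|n,k;a,b,c\rangle\}$ and $\{|n,k;a,b,c\rangle^{\sigma}\}$, reproduces exactly the factor displayed in \eqref{sigmaexp}. This is the very point at which the hitherto unexplained $(-1)^m$ of \eqref{31over} and \eqref{21over} is pinned down: conjugation by $D$ forces a definite relative phasing of the three families, so the symmetry determines these signs rather than leaving them free. I would fix the residual global sign by computing the $(n{=}1)$ overlap matrices by hand, where the conjugation $\Sigma=DP(\cdot)D$ can be verified directly; once the diagonal action of $\tau$ is in place the remaining identity is the purely formal substitution $b\leftrightarrow c$. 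If one prefers to display the coefficient with the roles of $\ell$ and $m$ interchanged, the self-duality \eqref{duality} of the normalized Racah functions is available, and the same conclusion also follows from the inverse-rotation route encoded in \eqref{2pi} and \eqref{4pi}.
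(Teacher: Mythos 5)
Your structural idea is sound and genuinely different from the paper's route: $\sigma=\tau\pi\tau$ does hold for the action on the slots $(a,x),(b,y),(c,1-x-y)$, Lemma \ref{lem:61} does make $\tau$ act diagonally as $\mathrm{diag}\big((-1)^k\big)$ on the normalized polynomials (the normalizations are indeed invariant under $b\leftrightarrow c$), and your parameter bookkeeping $S_m^{(b,c,-n-1,n+1+a+b)}(\ell)\mapsto S_m^{(c,b,-n-1,n+1+a+c)}(\ell)$ is correct. The paper instead writes $\pi$ and $\sigma$ in terms of the rotations via \eqref{2pi} and \eqref{4pi}, applies $R(\frac{2\pi}{3})$ to \eqref{piexp}, \emph{inverts} the resulting relation using the orthonormality \eqref{orthS}, and only then invokes the self-duality \eqref{duality} to land on \eqref{sigmaexp}.

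The gap is in the phase bookkeeping, and it is not a ``residual global sign.'' Your three factors multiply to $(-1)^\ell\cdot(-1)^m\cdot(-1)^m=(-1)^\ell$, so the conjugation yields
\begin{equation*}
\sigma\circ\widehat J^{(a,b,c)}_{n,\ell}\;=\;(-1)^{\ell}\sum_{m=0}^{n}S_m^{(c,b,-n-1,n+1+a+c)}(\ell)\,\widehat J^{(a,b,c)}_{n,m},
\end{equation*}
with the sign attached to the \emph{fixed} index $\ell$, whereas \eqref{sigmaexp} carries $(-1)^m$ on the \emph{summation} index. The two differ entrywise by $(-1)^{\ell+m}$, which no overall phase convention or basis renormalization can absorb; since $S_m^{(c,b,-n-1,n+1+a+c)}(\ell)$ does not vanish for $\ell+m$ odd (already at $n=1$, $a=b=c=0$ one finds $S_0(1)=S_1(0)=\sqrt{3}/2$), the two expressions are genuinely different matrices, so your schematic $\Sigma=D\,P(a,c,b)\,D$ cannot be literally identified with \eqref{sigmaexp}. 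The step you relegate to an optional remark --- the inversion via \eqref{orthS} combined with the self-duality \eqref{duality}, which exchanges the roles of the degree $m$ and the variable $\ell$ in the Racah function and thereby moves the sign from the column to the row index --- is exactly what the paper uses to reach the stated form, and it is indispensable here. (Your proposed $n=1$ hand computation is a good instinct, but it would reveal a further wrinkle rather than fix a sign: the true transition matrices do satisfy $\Sigma=D\,P(b\leftrightarrow c)\,D$, while the factor $(-1)^m$ in \eqref{piexp} is itself incompatible with $\pi^2=e$ together with \eqref{duality}; so the check exposes a tension in the sign conventions of the inputs instead of supplying the single overall sign your argument needs.)
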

\begin{proof}
    With the help of \eqref{4pi}, we have:
    \begin{equation}
          \pi \circ  \left[\frac{1}{\sqrt{N_{\ell}^{(b,c)}N_{n-\ell}^{(a, b+c+2\ell+1)}}}\; J_{n,\ell}^{(a,b,c)}(x,y)\right] = 
          \frac{(-1)^{\ell}}{\sqrt{N_{\ell}^{(b,a)}N_{n-\ell}^{(c, a+b+2\ell+1)}}}\; R(\frac{4\pi}{3}) \circ  J_{n,\ell}^{(a,b,c)}(x,y).
    \end{equation}
    Equating this last expression to the right-hand side of \eqref{piexp} and applying $R(\frac{2\pi}{3}) $ to both sides of the resulting equation (remembering that $R(\frac{2\pi}{3}): a \rightarrow b, b \rightarrow c, c \rightarrow a$ and that $R(\frac{2\pi}{3})\circ R(\frac{4\pi}{3})=e$), one finds
    \begin{align}
                  &\frac{(-1)^{\ell}}{\sqrt{N_{\ell}^{(c,b)}N_{n-\ell}^{(a, b+c+2\ell+1)}}}\;   J_{n,\ell}^{(a,b,c)}(x,y) \nonumber \\
                  &= \sum_{m=0}^n\;(-1)^m \; S_m ^{(c,a,-n-1,n+1+b+c)}(\ell) \  \frac{1}{\sqrt{N_m^{(c,a)}N_{n-m}^{(b, a+c+2m+1)}}}\;R(\frac{2\pi}{3}) \circ J_{n,m}^{(a,b,c)}(x,y).
    \end{align}
    Calling upon the action of $R(\frac{2\pi}{3}) $ given in \eqref{2pi} and the orthonormality of the functions $S_m(\ell)$ as per \eqref{orthS}, we find
    \begin{align}
    &\frac{1}{\sqrt{N_k^{(c,a)}N_{n-k}^{(b,a+c+2k+1)}}}\;\sigma \circ J_{n,k}^{(a,b,c)}(x, y) =\nonumber \\
    &\sum_{\ell=0}^n\;(-1)^{\ell} \; S_k ^{(c,a,-n-1,n+1+b+c)}(\ell) \  \frac{1}{\sqrt{N_{\ell}^{(c,b)}N_{n-\ell}^{(a, b+c+2\ell+1)}}}\; J_{n,\ell}^{(a,b,c)}(x,y). \label{sigmaexpsym}
\end{align}
With the help of the duality property \eqref{duality} of the orthonormalized functions which gives here
\begin{equation}
    S_k ^{(c,a,-n-1,n+1+b+c)}(\ell)=S_{\ell} ^{(c,b,-n-1,n+1+a+c)}(k),
\end{equation}
we see that \eqref{sigmaexpsym} becomes \eqref{sigmaexp} upon the appropriate renaming of the indices thereby proving the corollary.
\end{proof}

\begin{remark} A sign ambiguity may arise through the normalization process of the states and becomes of significance in relative comparisons of the bases. The proof of Corollary \ref{cor:1}, justifies the presence of the $(-1)^m$ factor in the expansion of vectors of one basis into those of another. Indeed, in its absence, the formulas such as \eqref{piexp} or \eqref{sigmaexp}, for the change of basis would be asymmetrical and not respect the covariance of the formalism. The presence of this factor can also be checked explicitly.
\end{remark}

To complete the picture one might wish to have the expansion of the polynomials $\sigma \circ J_{n,k}^{(a,b,c)}(x, y) =  J_{n,k}^{(b,a,c)}(y, x)  $ in terms of the elements of the set $\{\pi \circ J_{n,k}^{(a,b,c)}(x, y)=J_{n,k}^{(c,b,a)}\left[(1-x-y), y\right] , k=0,\dots,n \}$. Here it is:

\begin{cor}\label{cor:2}
    \begin{align}
    &\frac{1}{\sqrt{N_{\ell}^{(a,c)}N_{n-\ell}^{(b,a+c+2\ell+1)}}}\; J_{n,\ell}^{(b,a,c)}(y,x) =\nonumber \\
    &\sum_{m=0}^n\; S_m ^{(a,b,-n-1,n+1+a+c)}(\ell) \  \frac{1}{\sqrt{N_m^{(a,b)}N_{n-m}^{(c, a+b+2m+1)}}}\; J_{n,m}^{(c,b,a)}\left[(1-x-y),y\right]. \label{sigmapiexp}
\end{align}
\end{cor}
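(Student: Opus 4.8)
The plan is to reproduce the mechanism of Section~\ref{sec:over}, but now along the bottom edge connecting the $\pi$- and $\sigma$-bases rather than the $\pi$- and standard ones. The two families in \eqref{sigmapiexp} are precisely the overlaps ${}^{\scriptstyle \sigma}\langle n,\ell;a,b,c\mid x,y\rangle$ and ${}^{\scriptstyle \pi}\langle n,m;a,b,c\mid x,y\rangle$ of Propositions~\ref{pro:4} and~\ref{pro:2}, so it suffices to compute the overlap ${}^{\scriptstyle \sigma}\langle n,\ell;a,b,c\mid n,m;a,b,c\rangle^{\pi}$ between the two bottom bases and then to insert a resolution of the identity. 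The $\sigma$-basis diagonalizes $L_2=L-L_1-L_3$ and the $\pi$-basis diagonalizes $L_3$; since $L$ commutes with $L_1$ and $L_3$ it commutes with $L_2$ as well, so both $L_2$ and $L_3$ lie in $C_L(\mathfrak{J}_2)$ and generate a rank one subalgebra.

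First I would identify this subalgebra as being of Racah type and read off its parameters. This is immediate from the dihedral substitutions pervasive in this section: the reflection $\pi$ fixes $L_2$ and exchanges $L_1\leftrightarrow L_3$ (consistently with $a\leftrightarrow c$, since $L_1$ involves $b+c$ and $L_3$ involves $a+b$ while $L_2$ involves the invariant $a+c$), so it carries the relations \eqref{L1L2L1tt} of the pair $(L_1,L_2)$ onto those of $(L_3,L_2)$ (this can also be checked directly from Appendix~\ref{APP A}). Applying the same exchange $a\leftrightarrow c$ to the Racah parameters in \eqref{21over} turns $S_m^{(c,b,-n-1,n+1+a+c)}$ into $S_m^{(a,b,-n-1,n+1+a+c)}$; here $K_1=L_3$ supplies the degree $m$ and the pair $(\alpha,\beta)=(a,b)$ through its eigenvalue $-m(m+a+b+1)$, while $K_2=L_2$ supplies the variable $\ell$ and $(\gamma,\delta)=(-n-1,n+1+a+c)$ through $-\ell(\ell+a+c+1)$. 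Formula \eqref{ove_R} then gives the overlap as this Racah function, up to a phase.

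Fixing that phase is the delicate point, and is the reason \eqref{sigmapiexp} carries no $(-1)^m$ factor whereas \eqref{piexp} and \eqref{sigmaexp} do. The cleanest determination is the consistency computation through the standard basis: the overlap equals $\int x^a y^b(1-x-y)^c\,{}^{\scriptstyle \sigma}\langle n,\ell\mid x,y\rangle\,\langle x,y\mid n,m\rangle^{\pi}\,dx\,dy$ over the triangle, and expanding both factors on the standard basis via \eqref{sigmaexp} and \eqref{piexp} and applying the orthonormality of that basis (Proposition~\ref{prop 1}) collapses the sum to a single common degree $m'$; the two conventional phases then combine as $(-1)^{m'}(-1)^{m'}=1$, so the net phase is $+1$. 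This confirms
\begin{equation}
{}^{\scriptstyle \sigma}\langle n,\ell;a,b,c\mid n,m;a,b,c\rangle^{\pi}=S_m^{(a,b,-n-1,n+1+a+c)}(\ell),
\end{equation}
and incidentally realizes the relevant Racah pentagon (Biedenharn--Elliott) identity as an automatic consequence of the algebra, exactly in the spirit of the Remark following Corollary~\ref{cor:1}.

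Finally I would insert $\sum_m |n,m;a,b,c\rangle^{\pi}\,{}^{\scriptstyle \pi}\langle n,m;a,b,c|$ between the bra ${}^{\scriptstyle \sigma}\langle n,\ell;a,b,c|$, which has definite $L$-eigenvalue so that only the given $n$ contributes, and the ket $|x,y\rangle$, obtaining ${}^{\scriptstyle \sigma}\langle n,\ell;a,b,c\mid x,y\rangle=\sum_{m=0}^{n}S_m^{(a,b,-n-1,n+1+a+c)}(\ell)\,{}^{\scriptstyle \pi}\langle n,m;a,b,c\mid x,y\rangle$. Substituting the explicit overlaps of Propositions~\ref{pro:4} and~\ref{pro:2}, the common weight $\sqrt{x^a y^b(1-x-y)^c}$ cancels on both sides, and the manifest symmetry $N_m^{(b,a)}=N_m^{(a,b)}$ of \eqref{norm1} matches the normalizations; this yields exactly \eqref{sigmapiexp}. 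I expect the main obstacle to be precisely the two bookkeeping issues of the middle step — choosing which of $L_2,L_3$ plays $K_1$ versus $K_2$ so that the parameters appear in the order $(a,b,-n-1,n+1+a+c)$ demanded by \eqref{sigmapiexp}, and justifying the cancellation of the sign — while the remaining manipulations merely transcribe arguments already carried out in Sections~\ref{sec:over} and~\ref{sec:symm}.
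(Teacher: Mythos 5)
Your proof is correct, but it takes a genuinely different route from the paper's. The paper disposes of Corollary \ref{cor:2} in one line: apply the rotation $R(\frac{4\pi}{3})$ to both sides of \eqref{piexp}, use $R(\frac{4\pi}{3})\circ\pi=\sigma$ on the left and formula \eqref{4pi} on the right --- whose $(-1)^m$ cancels the $(-1)^m$ already present in \eqref{piexp}, which is why \eqref{sigmapiexp} is sign-free --- and relabel the parameters under $a\to c$, $b\to a$, $c\to b$. You instead redo the direct representation-theoretic computation of Section \ref{sec:over} along the remaining bottom edge: you identify the third centrally extended Racah pair $(L_3,L_2)$ inside $C_{L}(\mathfrak{J}_2)$ (correctly justified via the automorphism $X_1\leftrightarrow X_3$, $L_1\leftrightarrow L_3$, $a\leftrightarrow c$ of the Appendix \ref{APP A} relations), read the overlap ${}^{\scriptstyle\sigma}\langle n,\ell|n,m\rangle^{\pi}$ off \eqref{ove_R} with the correct assignment $K_1=L_3$, $K_2=L_2$, and insert a resolution of the identity in the $\pi$-basis, after which Propositions \ref{pro:2} and \ref{pro:4} and the symmetry $N_m^{(b,a)}=N_m^{(a,b)}$ give \eqref{sigmapiexp}. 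What your route buys is a self-contained derivation completing the triad of Racah pairs $(L_1,L_3)$, $(L_1,L_2)$, $(L_2,L_3)$ and exhibiting the Biedenharn--Elliott identity as a byproduct; what the paper's buys is brevity, at the price of leaning on the full $D_3$ covariance already set up. One step of yours should be tightened: observing that the convolution $\sum_{m'}S_{m'}^{(c,b,-n-1,n+1+a+c)}(\ell)\,S_{m'}^{(b,c,-n-1,n+1+a+b)}(m)$ carries no explicit signs does not by itself fix the overall sign of the overlap relative to $S_m^{(a,b,-n-1,n+1+a+c)}(\ell)$; to pin it to $+1$ evaluate, say, at $\ell=m=0$, where $R_{m'}(\lambda(0))=1$ and all weight and normalization factors are positive, so both sides are manifestly positive. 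With that addendum your argument is complete.
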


\begin{proof}
    Apply $R(\frac{4\pi}{3})$ to both sides of the relation \eqref{piexp}. Use the fact that $R(\frac{4\pi}{3})\circ \pi = \sigma$ and formula \eqref{4pi} to arrive at \eqref{sigmapiexp}.
\end{proof}

\subsection{The overall picture}\label{sec:overall}

The general covariance of the algebraic interpretation scheme for the two-variable Jacobi polynomials is captured in Figure \ref{fig:6} below. Starting, with the initial pentagon of Figure \ref{fig:tr}, two additional pentagons generated by successive rotations of $\frac{2\pi}{3}$ are displayed around a triangle formed by their bases. The pentagon of Figure \ref{fig:4} is the one obtained after the first rotation of $\frac{2\pi}{3}$. The generators at the summit of each pentagon arise from cyclical permutation so as to have the sequence $(X_1, X_3), (X_2, X_1), (X_3, X_2)$ reading in anticlockwise fashion. Note that the labels attached to the vertices of each pentagon are mapped onto one another by the reflections about the axis that passes through their summit and crosses their base at $\frac{\pi}{2}$. These reflections are respectively, $\pi, \sigma, \tau$. The vertices connected by segments formed of dashes and dots should be identified and have been blown up to exhibit the symmetry of the whole diagram with respect to reflections about these axes and to allow for the appropriate positioning of the generators according to which pentagon a vertex is taken to belong.

To each vertex is associated a representation basis of the rank two Jacobi algebra $\mathfrak{J}_2$ defined by the joint eigenvectors of the generators attached to the vertex. Two-variable Jacobi polynomials arise as overlaps between bases associated to top and bottom vertices of a given pentagon. The pair of these polynomials intervening in one pentagon are mutually related by the reflection symmetry of that pentagon and the matrix connecting them are given in terms of Racah polynomials owing to the fact that two different representation bases of a centrally extended rank one Racah subalgebra of $\mathfrak{J}_2$ are associated to the bottom vertices of a pentagon.

Let us now explain how Figure \ref{fig:6} depicts the transformations of the two-variable Jacobi polynomials that we have discussed in this section. We shall start from the top of the diagram and look at it in a counterclockwise way. We recall that the standard two-variable Jacobi polynomials $J_{n,k}^{(a,b,c)}(x,y)$ are attached to the left side of the top pentagon. 

Note now that the bases corresponding to the simultaneous diagonalization of $X_1, X_3$ or $X_1, X_2$ or $X_2, X_3$ are all the same, i.e., $\{|\;x,y\;\rangle\}$ since
\begin{equation}
    X_1\;|\;x,y\;\rangle=x\;|\;x,y\;\rangle, \quad X_2\;|\;x,y\;\rangle=y\;|\;x,y\;\rangle, \quad X_3\;|\;x,y\;\rangle=\;(1-x-y)\;|\;x,y\;\rangle.
\end{equation}
It follows as a consequence that the polynomials corresponding to the right side of the second pentagon cannot be different from those ($J_{n,k}^{(a,b,c)}(x,y)$) associated of the left top-to-bottom path of the first pentagon since both subgraphs connect the same bases. The diagram shows that these two paths are images one of the other under the reflection $\tau$ about the axis passing through the summit $(X_3,X_2)$ of the third pentagon and the $(L,L_1)$ corner of the triangle. The requirement that these two branches lead to the same polynomials has been validated first, explicitly, in Subsection \ref{sec:bac} and confirmed, in Lemma \ref{lem:61}, from the symmetry perspective that is incorporated in Figure \ref{fig:6}. The same reasoning and result apply to establish that the polynomials attached to the left and right sides respectively of the third and first pentagons are the same up to signs as these arms of the corresponding pentagons are related by the reflection $\sigma$.

\begin{center}
\begin{tikzpicture}[scale=1.7]
   
    \def\R{2cm}\def\RT{0.8cm}
    \def\Rt{2.1cm}
    \def\Rtt{1.8cm}

    \foreach \i in {1,...,9} {
        \coordinate (P\i) at ({90 + (\i-1)*360/9}:\R);
    }

     \foreach \i in {1,...,3} {
        \coordinate (T\i) at ({-90 + (\i-1)*360/3}:\RT);
    }
    \draw[dashed] (T1) -- (T2) --(T3)--(T1);

\draw (T2)--(P8);\draw (T2)--(P9);
\draw (T3)--(P2);\draw (T3)--(P3);
\draw (T1)--(P5);\draw (T1)--(P6);

   \draw (P9) -- (P1);\draw (P1) -- (P2);
\draw[dash dot dot] (P2)--(P3) ;
\draw (P3) -- (P4);\draw (P4) -- (P5);
\draw[dash dot dot] (P5)--(P6) ;
\draw (P6) -- (P7);\draw (P7) -- (P8);

\draw[dash dot dot](P8)--(P9) ;

\node at ({-90 + (0)*360/3}:1) {$L$};
\node at ({-90 + (1)*360/3}:1) {$L$};
\node at ({-90 + (2)*360/3}:1) {$L$};

\node at ({-90 + (0)*360/3}:0.5) {$L_2$};
\node at ({-90 + (1)*360/3}:0.5) {$L_3$};
\node at ({-90 + (2)*360/3}:0.5) {$L_1$};

\node at ({90 + (+0.2)*360/9}:\Rt) {$X_3$};
\node at ({90 + (-0.2)*360/9}:\Rt) {$X_1$};
\node at ({90 + (-0.2+1)*360/9}:\Rt) {$L_1$};
\node at ({90 + (+0.15+1)*360/9}:\Rtt) {$X_1$};
\node at ({90 + (-0.15-1)*360/9}:\Rtt) {$X_3$};
\node at ({90 + (+0.2-1)*360/9}:\Rt) {$L_3$};
\node at ({90 + (-0.2-2)*360/9}:\Rt) {$L_3$};
\node at ({90 + (+0.15-2)*360/9}:\Rtt) {$X_3$};
\node at ({90 + (-0.15+2)*360/9}:\Rtt) {$X_1$};
\node at ({90 + (+0.2+2)*360/9}:\Rt) {$L_1$};
\node at ({90 + (-0.2+3)*360/9}:\Rt) {$X_2$};
\node at ({90 + (+0.2+3)*360/9}:\Rt) {$X_1$};
\node at ({90 + (-0.2+4)*360/9}:\Rt) {$L_2$};
\node at ({90 + (+0.15+4)*360/9}:\Rtt) {$X_2$};
\node at ({90 + (-0.15+5)*360/9}:\Rtt) {$X_2$};
\node at ({90 + (+0.2+5)*360/9}:\Rt) {$L_2$};
\node at ({90 + (-0.2+6)*360/9}:\Rt) {$X_3$};
\node at ({90 + (+0.2+6)*360/9}:\Rt) {$X_2$};
     \foreach \i in {1,...,9} {
        \draw[fill]  (P\i) circle (0.08); ;
     }

       \foreach \i in {1,...,3} {
        \draw[fill]  (T\i) circle (0.08); ;
     }

\end{tikzpicture}

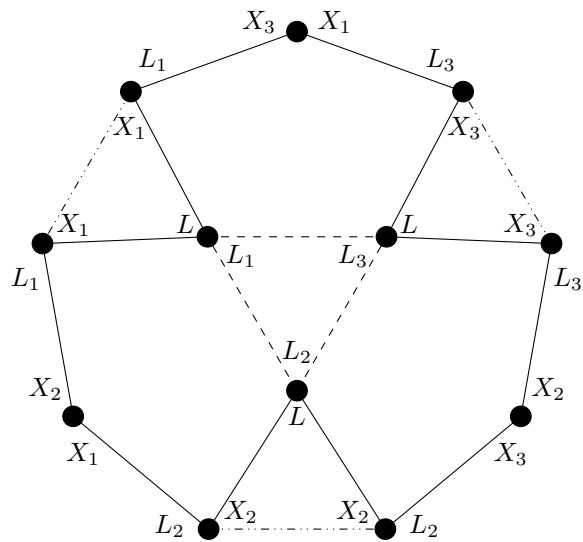
\captionof{figure}{This diagram depicts the symmetry of order three of the algebraic description of the two-variable Jacobi polynomials. Its detailed explanation is given in Subsection 6.4. \label{fig:6}}
\end{center}

\bigskip

The two other families of two-variable Jacobi polynomials that were introduced and characterized namely, $\pi \circ J_{n,k}^{(a,b,c)}(x,y)$ and $\sigma \circ J_{n,k}^{(a,b,c)}(x,y)$ were obtained and defined by applying the reflection symmetry $\pi$ of the first pentagon to the functions attached to its left side and similarly for the other set, by applying the reflection symmetry $\sigma$ to the polynomials attached to the right side of the second pentagon which are essentially, as we just discussed, those attached to the left side of the first (top) pentagon.

Owing to the properties \eqref{2pi} and \eqref{4pi} according to which the actions of $R(\frac{2\pi}{3})$ and $R(\frac{4\pi}{3})$ on the polynomials $J_{n,k}^{(a,b,c)}(x,y)$ equate, up to signs, to the actions respectively of $\sigma$ and $\pi$ on the same polynomials, we see that this is reflected in Figure 6 in the fact that the left arm of the top pentagon goes into the left side of pentagon 2 and then onto the left top-to-bottom path of pentagon 3 (which is known to be "equivalent" to the right one of pentagon 1) under counterclockwise rotations of $\frac{2\pi}{3}$ and $\frac{4\pi}{3}$.

Finally, given that the Racah algebra presentation is also covariant under the dihedral transformations, Figure \ref{fig:6} naturally reflects that the expansions of $\sigma \circ J_{n,k}^{(a,b,c)}(x,y)$ in terms of $J_{n,k}^{(a,b,c)}(x,y)$ (Corollary \ref{cor:1}) and of $\pi \circ J_{n,k}^{(a,b,c)}(x,y)$  in terms of $\sigma \circ J_{n,k}^{(a,b,c)}(x,y)$ (Corollary \ref{cor:2}) can be obtained by rotations of $R(\frac{2\pi}{3})$ and $R(\frac{4\pi}{3})$ of the relation of Proposition \ref{pro:4} (and recast in \eqref{piexp}) giving $\pi \circ J_{n,k}^{(a,b,c)}(x,y)$ as an expansion over the basis $ J_{n,k}^{(a,b,c)}(x,y)$ with univariate Racah polynomials as coefficients.

\section{Conclusion \label{sec:conc}}

As mentioned in the introduction, the two-variable Jacobi polynomials on the triangle have been studied and used in various contexts over many decades. The relation between special functions and representation theory enjoys a secular tradition and it is in this fruitful spirit that we aimed to provide an algebraic interpretation of these bivariate orthogonal polynomials. This could be achieved by introducing the Jacobi algebra of rank two denoted by $\mathfrak{J}_2$ and showing that the two-variable Jacobi polynomials arise as overlaps between representation bases of this algebra.

Let us now recap the main elements of this characterization. The algebra $\mathfrak{J}_2$ is presented in terms of generators and relations in Appendix \ref{APP A}. It has five generators: $X_1, X_3, L_1, L_3, L$ and five maximal abelian subalgebras: $(X_1,X_3), (L_,L_1), (L,L_3), (X_1, L_1), (X_3, L_3)$.
We consider a representation in which all these generators are symmetrizable. To each of the maximal abelian algebras (MAA), there is associated a representation basis defined by the joint eigenvectors of the generators of the particular MAA. We shall designate below these bases by the pair of generators they diagonalize. The overlaps between these basis vectors will produce orthogonal functions that are by construction bispectral.

The characterization of the two-variable Jacobi polynomials $J_{n,k}^{(a,b,c)}(x,y)$ exploits the subalgebra structure of $\mathfrak{J}_2$ and the representation theory of the rank one Jacobi and Racah algebras. We focused on the overlaps between the bases $(X_1, X_3)$ and $(L,L_1)$ and between the bases $(X_1, X_3)$ and $(L,L_3)$ thus involving all five generators. We saw that this gave rise to two families of bivariate Jacobi polynomials. In the spirit of Leonard pairs \cite{terwilliger2003introduction}, it is of importance that the set of bases can be organized in two subsets as depicted by Figure \ref{fig:2} and Figure \ref{fig:3}, which begin with $(X_1, X_3)$ and end with either $(L,L_1)$ or $(L,L_3)$ and where successive couples of generators have a common element \cite{crampe2025factorized}. This has allowed to use the representation of $\mathfrak{J}_2$ to derive the explicit structure of the two-variable Jacobi polynomials. The change of basis matrix that relates the joint eigenvectors of $X_1$ and $X_3$ and those of $L$ and $L_1$ or $L_3$ can be obtained by convoluting the overlaps between these bases and the intermediate ones calling upon the resolution of the identity that these last bases provide. The generators that are different in successive basis-defining pairs form, as it turns out, rank one Jacobi algebras where the element that is common to the two pairs is central. The explicit expression of two-variable Jacobi polynomials as entangled products of standard one-variable Jacobi polynomials was seen to readily follow at that stage from the representation theory (that we have reviewed) of the rank one Jacobi algebra. This was one key result we wished to obtain and it is found in Propositions \ref{prop 1} and \ref{pro:2} that pertain to the two families resulting from the overlaps between the vectors of the basis $(X_1, X_3)$ with those of the basis $(L,L_1)$ on the one hand and $(L,L_3)$ on the other.

At this point, it is very natural to examine the overlaps between the end bases 
$(L,L_1)$ and $(L, L_3)$. This allows to loop the loop by connecting the two strands with three pairs of generators and to relate the two-variable Jacobi polynomials of one family to those of the other. As emphasized, the framework then lends itself naturally to a description using  pentagons as in Figure \ref{fig:tr}. This illustrates well that the polynomials of one family are obtained from those of the other from a reflection that we denoted by $\pi$. From the striking observation that the generators $L_1$ and $L_3$ realize a rank one Racah algebra centrally extended by $L$, we could obtain a formula expressing the reflected polynomials $\pi \circ J_{n,k}^{(a,b,c)}(x,y)$ in terms of the standard polynomials $J_{n,k}^{(a,b,c)}(x,y)$ with expansion coefficients given in terms of univariate Racah polynomials (Proposition \ref{pro:3}). This offered a new representation-theoretic proof of a result obtained by Dunkl \cite{dunkl1984orthogonal}. 

This relation that equates the convolution of three univariate polynomials (two of Jacobi type and one of Racah type) to another convolution of two univariate Jacobi polynomials is analogous to the pentagonal identity obeyed by the Racah polynomials and known as the Biedenharn-Elliott identity which is viewed as the archetype of pentagon relations. Pentagon maps appear in many areas notably in conformal and topological quantum field theory (see for instance \cite{moore1989classical}, \cite{reshetikhin1991invariants}, \cite{turaev2010quantum}). It might hence be of interest to reflect on the lights that the present concrete study brings in this respect.

The algebraic interpretation of the two-variable Jacobi polynomials on the triangle is expected to be covariant under the dihedral symmetry of the underlying simplex. This was seen to be so and the corresponding properties were discussed in details. These have been synthesized in Figure \ref{fig:6} which is thoroughly commented in Subsection \ref{sec:overall}.

It should be mentioned in concluding that a $q$ deformation of the two-variable Jacobi polynomials has been introduced in \cite{lewanowicz2010two}; these orthogonal polynomials are constructed in terms of two univariate big $q$-Jacobi polynomials. The $q=-1$ limit of these bivariate polynomials has been obtained in \cite{genest2015two}. These families of polynomials should lend themselves to interesting representation-theoretic treatments that would feature at their core the rank one Askey-Wilson \cite{zhedanov1991hidden}, \cite{crampe2021askey} and Bannai-Ito \cite{tsujimoto2012dunkl}, \cite{de2015bannai} algebras. Clearly, the extension of the present study to more than two variables would entail a richer structure and would certainly be also worth exploring. The embedding of $\mathfrak{J}_2$ in $\mathfrak{sl}_3$ \cite{rühl} and its role in connection with superintegrable models \cite{iliev2018symmetry}, \cite{genest2014superintegrability} are also research topics for the future.

\appendix
 \section{Structure relations of the rank two Jacobi algebra $\mathcal{J}_2$} \label{APP A}

 Here are the relations between the five generators $L, L_1, L_3, X_1, X_3$ that define the rank two Jacobi algebra $\mathcal{J}_2$.
 Recall that 
 \begin{equation}
     [L,L_1]=0, \; [L,L_3]=0, \; [L_1,X_1]=0, \; [L_3,X_3]=0, \; [X_1,X_3]=0. \label{zero}
 \end{equation}
 
\subsection{Commutators involving $[L,X_1]$}
 \begin{align}
     [[L,X_1]\;,L\;]&=2\{X_1,L\}-2L+2L_1-(a+b+c+1)((a+b+c+3)X_1 -(a+1)I),\label{LX1L}\\
     [[L,X_1],L_1]&=0 \quad \text{ by Jacobi identity}, \\
     [[L,X_1],L_3]&=\{X_1,L+L_3\}+\{X_3-I,L-L_1\}\nonumber\\
     & \quad -(a+b+c+1)\left((a+1)(X_1+X_3-I)+(b+1)X_1\right),\\ 
     [[L,X_1],X_1]&=-2X_1^2+2X_1,\label{LX1X1}\\
     [[L,X_1],X_3]&=-2X_1X_3.\label{LX1X3}
 \end{align}

\subsection{Commutators involving $[L,X_3]$}

\begin{align}
    [[L,X_3]\;,L\;]&=2\{ X_3,L\}-2L+2 L_3
    -(a+b+c+1)((a+b+c+3)X_3 - (c+1)I),\label{LX3L}\\
    [[L,X_3],L_1]&=\{X_1-I, L- L_3\}+\{X_3,L+L_1\}
\nonumber\\
& \qquad -(a+b+c+1)\left((c+1)(X_1+X_3-I)+(b+1)X_3\right),
\\
    [[L,X_3],L_3]&=0 \quad \text{by Jacobi identity},\\
    [[L,X_3],X_1]&=
     [[L,X_1],X_3] \quad \text{ by Jacobi identity}, \\
    [[L,X_3],X_3]&=-2X_3^2+2X_3.\label{LX3X3}
\end{align}

\subsection{Commutators involving $[L_1,L_3]$}
\begin{align}
[[L_1,L_3],\;L\;]&=0 \quad \text{by Jacobi identity},\\
[[L_1,L_3],L_1]&=2\{L_1,L_3\}+2L_1^2-2L_1L 
    +(b+c)(b+1)(L-L_1-L_3) \nonumber\\&
    - (b+c)(c+1)L_3-(b-c)(a+1)L_1, \label{L1L3L1}\\
[[L_1,L_3],L_3]&=-2\{L_1,L_3\}-2L_3^2+2L_3L  
    -(a+b)(b+1)(L-L_1-L_3) \nonumber\\&
    + (a+b)(a+1)L_1+(b-a)(c+1)L_3, \label{L1L3L3}\\
[[L_1,L_3],X_1]&=   -\{X_1-I,L-L_1-L_3\}-\{X_3,L-L_1\}
\nonumber\\
    &+(a+1)\left((b+1)X_3+(c+1)(X_1+X_3-I)\right), \\
[[L_1,L_3],X_3]&=\{X_1,L-L_3\} +\{X_3-I,L-L_1-L_3\}
\nonumber \\
    & 
-(c+1)\left((a+1)(X_1+X_3-I)+(b+1)X_1\right).
 \end{align}
 
\subsection{Commutators involving $[L_1,X_3]$}
\begin{align}
 [[L_1,X_3]\;,L\; ]&=
    [[L,X_3],L_1]\quad \text{ by Jacobi identity}, \\
 [[L_1,X_3],L_1]&=2\{X_3 ,L_1\}+\{X_1,L_1\}-2L_1  \nonumber \\
 &-(b+c) \left((b+c+2)X_3 -(c+1)(I-X_1) \right), \label{L1X3L1}\\
 [[L_1,X_3],L_3]&=
     [[L_1,L_3],X_3]\quad \text{ by Jacobi identity},\\
 [[L_1,X_3],X_1]&=0\quad \text{ by Jacobi identity},\\
 [[L_1,X_3],X_3]&=-2X_3^2+2(I-X_1)X_3.\label{L1X3X3}
\end{align}

\subsection{Commutators involving $[L_3,X_1]$}
\begin{align}
    [[L_3,X_1]\;,L\;]
     &=[[L,X_1],L_3]\quad \text{ by Jacobi identity},\\
    [[L_3,X_1],L_1]&=
   -[[L_1,L_3], X_1]\quad \text{ by Jacobi identity},\\
    [[L_3,X_1],L_3]&=\{2X_1+X_3-I,L_3\} \nonumber\\ 
 &-(a+b)\left((a+1)(X_1+X_3-I)+(b+1)X_1\right), \label{L3X1L3}\\
    [[L_3,X_1],X_1]&=-2X_1(X_1+X_3-I), \label{L3X1X1}\\
    [[L_3,X_1],X_3]&=0\quad \text{ by Jacobi identity}.
\end{align}
Note that the commutators down the list that can be obtained from the Jacobi identity $[[X,Y],Z]+[[Y,Z],X]+[[Z,X],Y] = 0$ are not repeated; rather, it is indicated to which expression previously given they are equal.

\subsection{Implied relations.}

Any commutation relations between two commutators can be computed from the previous relations. For example, let use the Jacobi identity and relations \eqref{LX1L}, \eqref{LX1X3} to transform the commutator $[[L,X_1],[L,X_3]]$:
\begin{align}
   &[[L,X_1],[L,X_3]]=[[[L,X_1],L],X_3]+[L,[[L,X_1],X_3]] \nonumber\\
   =&2[L_1,X_3]-2[L,X_3]+ 2[L,X_3] X_1-2[L,X_1]X_3.
\end{align}
Similar relations can be obtained for all the commutators between two commutators of the generators. These relations, together the defining ones, allows us to order the generators $L,L_1,L_3,X_1,X_3$ and the commutators $[L,X_1]$, $[L,X_3]$, $[L_1,L_3]$, $[L_1,X_3]$, $[L_3,X_1]$.

Let us mention that there exist additional relations between the ordered monomials. This feature has been already studied in \cite{crampe2021racah,post2024racah} for the higher rank Racah algebra. For example, the previous relation can be computed differently:
\begin{align}
   &[[L,X_1],[L,X_3]]=[[L,[L,X_3]],X_1]+[L,[X_1,[L,X_3]]] \nonumber\\
   =&2[L,X_1] -2[L_3,X_1]+2[L,X_1]X_3-2[L,X_3]X_1.
\end{align}
Comparison of the both results leads to 
\begin{align}
  [L_1,X_3]-[L,X_3]=[L,X_1] -[L_3,X_1]. \label{identity}
\end{align}

\section{Representation \label{sec:rep}}

We shall here provide the relevant representation of $\mathfrak{J}_2$ in terms of operators in the variables $x$ and $y$ that is, in the basis $\{|x,y\;\rangle |0\le x\le  1-y \le 1 \}$ and, in terms of difference operators in $n$ and $k$ that is in the basis $\{|n,k; a, b,c\rangle | n,k\in\mathbb{N}, 0\le k\le n\}$. It will be remarked that all generators are symmetrizable in these bases. Most results are taken from \cite{crampe2025two} where they are proved.

\subsection{Differential representation}

It is checked that the following operators in the variables $x$ and $y$ realize  the commutation relations given in Appendix \ref{APP A} (under the correspondence $W \rightarrow \widetilde{W}$, with $W=X_1, X_3, L_1, L_3, L$)
\begin{align}
        &\widetilde{X}_1=x,\\
    &\widetilde{X}_3=1-x-y,\\
    &\widetilde{L}_1=((b+1)(1-x)-(b+c+2)y)\partial_y + y(1-x-y) \partial_{yy},\label{L1}\\
    &\widetilde{L}_3=((a+1)y-(b+1)x)(\partial_x-\partial_y) + xy \left(\partial_{xx}+\partial_{yy}-2\partial_{xy}\right)\label{L3},\\
    &\widetilde{L}= x(1-x)\partial_{xx}+y(1-y)\partial_{yy} \nonumber \\
    &\quad-2xy\partial_{xy}+(a+1-(a+b+c+3)x)\partial_{x}+(b+1-(a+b+c+3)y)\partial_{y}.
\end{align}

\subsection{Difference realization}

Here again, it can be verified that the following difference operators acting on the degrees $n$ and $k$ obey the commutation relations that define $\mathfrak{J}_2$ (with $W \rightarrow \widehat{W}$).
\begin{align}
\widehat{X}_1&=-
\dfrac{(n-k+1)(n+k+a+b+c+2)}{(2n+a+b+c+2)(2n+a+b+c+3)} S_{+}\nonumber\\
&+\dfrac12\bigg(1-\dfrac{(2k+a+b+c+1)(2k-a+b+c+1)}{(2n+a+b+c+1)(2n+a+b+c+3)} \bigg)I\nonumber\\
&-\dfrac{(n-k+a)(n+k+b+c+1)}{(2n+a+b+c+1)(2n+a+b+c+2)}S_{-},\\
    \widehat{X}_3 &=
\dfrac{1}{2} I - \dfrac{1}{2} \widehat{X}_1    \nonumber\\&
+\dfrac{(k+1)(k+b+c+1)(n+k+a+b+c+2)(n+k+a+b+c+3) }{(2k+b+c+1)(2k+b+c+2)(2n+a+b+c+2)(2n+a+b+c+3)} 
S_{+}T_{+}\nonumber\\
&+\dfrac{2(k+1)(k+b+c+1)(n-k+a)(n+k+a+b+c+2) }{(2k+b+c+1)(2k+b+c+2)(2n+a+b+c+1)(2n+a+b+c+3)} 
T_{+}\nonumber\\
&+\dfrac{(k+1)(k+b+c+1)(n-k+a-1)(n-k+a) }{(2k+b+c+1)(2k+b+c+2)(2n+a+b+c+1)(2n+a+b+c+2)} 
S_{-}T_{+}\nonumber\\
&+\dfrac{(c^2-b^2)(n-k+1)(n+k+a+b+c+2) }{2(2k+b+c)(2k+b+c+2)(2n+a+b+c+2)(2n+a+b+c+3)}
S_{+}\nonumber\\
&+\frac{c^2-b^2}{4}\bigg(\dfrac{1}{(2k+b+c)(2k+b+c+2)}+\dfrac{1}{(2n+a+b+c+1)(2n+a+b+c+3)}
\nonumber\\
& \qquad  + \dfrac{1-a^2 }{(2k+b+c)(2k+b+c+2)(2n+a+b+c+1)(2n+a+b+c+3)} \bigg)  
I \nonumber\\
&+\dfrac{(c^2-b^2)(n-k+a)(n+k+b+c+1) }{2(2k+b+c)(2k+b+c+2)(2n+a+b+c+1)(2n+a+b+c+2)} 
S_{-}\nonumber\\
&+\dfrac{(k+b)(k+c)(n-k+1)(n-k+2) }{(2k+b+c)(2k+b+c+1)(2n+a+b+c+2)(2n+a+b+c+3)} 
S_{+}T_{-}\nonumber\\
&+\dfrac{2(k+b)(k+c)(n-k+1)(n+k+b+c+1) }{(2k+b+c)(2k+b+c+1)(2n+a+b+c+1)(2n+a+b+c+3)} 
T_{-}\nonumber\\
&+\dfrac{(k+b)(k+c)(n+k+b+c)(n+k+b+c+1) }{(2k+b+c)(2k+b+c+1)(2n+a+b+c+1)(2n+a+b+c+2)} 
S_{-}T_{-},\\
\widehat{L}_1&= -k(k+b+c+1)I,
\end{align}
\begin{align}
\widehat{L}_3&=
\frac{ \left(k+b  \right) \left(k+c  \right)\left(n-k+1\right) \left(n +k +b +c +1\right)}{\left(2k+b +c  \right) \left(2 k +b +c +1\right)}  
    T_{-}
    \nonumber \\&
    +\frac{\left(k +1\right) \left(k +b +c+1\right) \left(n -k+a \right) \left(n +k +a +b +c +2\right) }{\left(2 k +b +c +1\right) \left(2 k +b +c +2\right)}
    T_{+}
    \nonumber \\&
    +\biggl((k-n)(n-k+a+b+1) -\dfrac{k(k+c)(n-k+1) (n-k+a+1)}{2k+b+c} 
    \nonumber \\&\qquad
    + \dfrac{(k+1)(k+c+1)(n-k)(n-k+a)}{2k+b+c+2} \biggr) I ,\\
    \widehat{L}&=-n(n+a+b+c+2)I,
\end{align}
where the shifts $S_{\pm}$ and $T_{\pm}$ act as follows on generic vectors $|n,k\;\rangle$ labeled by these integers:
\begin{equation}
    S_{\pm}\;|n,k\;\rangle = |n\pm 1,k\;\rangle, \qquad    T_{\pm}\;|n,k\;\rangle = |n,k\pm 1\;\rangle.
\end{equation}

\subsection{Symmetrization and characterization of the polynomials }
We have found (see Proposition \ref{prop 1}) that 
 \begin{equation}
     \langle n,k;a,b,c\;|\;x,y\rangle =    \xi_{n,k}^{(a,b,c)} J_{n,k}^{(a,b,c)}(x,y)
 \end{equation}
 with \begin{equation}
     \xi_{n,k}^{(a,b,c)} = \sqrt{\frac{x^a y^b (1-x-y)^c}{N_k^{(b,c)}N_{n-k}^{(a,b+c+2k+1)}}}.
 \end{equation}
 It is observed that all the operators 
 \begin{equation}
      \xi_{n,k}^{(a,b,c)}\;\widetilde{W} \left[\xi_{n,k}^{(a,b,c)}\right]^{-1} \quad \text{and} \quad       \xi_{n,k}^{(a,b,c)}\;\widehat{W} \left[\xi_{n,k}^{(a,b,c)}\right]^{-1},
 \end{equation}
 are hermitian.
 
 \noindent It follows also that the relations
 \begin{equation}
     \widetilde{W}\; \left[ \xi_{n,k}^{(a,b,c)}\right]^{-1}\langle n,k;a,b,c\;|\;x,y\rangle = \widehat{W}\; \left[ \xi_{n,k}^{(a,b,c)}\right]^{-1}\langle n,k;a,b,c\;|\;x,y\rangle,
 \end{equation}
 for $W=L, L_1, X_1, X_3$ provide the recurrence relations and the differential equations of the two-variable Jacobi polynomials $J_{n,k}^{(a,b,c)}(x,y)$. These are given in more details in \cite{crampe2025two}.

\section*{Acknowledgments}
This work has been sponsored by a Québec-Kyoto cooperation grant from the Ministère des Relations Internationales et de la Francophonie of the Quebec Government. The authors acknowledge stimulating conversations with Jarmo Hietarinta, Sarah Post and Sasha Turbiner.
NC is partially supported by the international research project AAPT of the CNRS.
The research of ST is supported by JSPS KAKENHI (Grant Number 24K00528). LV is funded in part through a discovery grant of the Natural Sciences and Engineering Research Council (NSERC) of Canada. QL and LM enjoy postdoctoral fellowships provided by this fund.

\bibliographystyle{unsrt} 
\bibliography{ref_2varJac.bib}

\begin{thebibliography}{10}

\bibitem{dunkl2014orthogonal}
Charles~F Dunkl and Yuan Xu.
\newblock {\em Orthogonal polynomials of several variables}.
\newblock Number 155. Cambridge University Press, 2014.

\bibitem{proriol1957famille}
Joseph Proriol.
\newblock Sur une famille de polyn{\^o}mes {\`a} deux variables orthogonaux dans un triangle.
\newblock {\em Comptes rendus hebdomadaires des séances de l'Académie des sciences}, 245(26):2459--2461, 1957.

\bibitem{koornwinder1975two}
Tom Koornwinder.
\newblock Two-variable analogues of the classical orthogonal polynomials.
\newblock In {\em Theory and application of special functions}, pages 435--495. Elsevier, 1975.

\bibitem{dubiner1991spectral}
Moshe Dubiner.
\newblock Spectral methods on triangles and other domains.
\newblock {\em Journal of Scientific Computing}, 6:345--390, 1991.

\bibitem{karniadakis2005spectral}
George Karniadakis and Spencer~J Sherwin.
\newblock {\em {Spectral/hp element methods for computational fluid dynamics}}.
\newblock Oxford University Press, 2005.

\bibitem{krall1967orthogonal}
Herbert~L Krall and Isador~M Sheffer.
\newblock {Orthogonal polynomials in two variables}.
\newblock {\em Annali di Matematica Pura ed Applicata}, 76:325--376, 1967.

\bibitem{engelis1974some}
GK~Engelis.
\newblock {On some two-dimensional analogues of classical orthogonal polynomials}.
\newblock {\em Latv. mat. ezhegodnik}, 15:169--202, 1974.

\bibitem{vinet2003two}
Luc Vinet and Alexei Zhedanov.
\newblock {Two-Dimensional Krall--Sheffer Polynomials and Quantum Systems on Spaces with Constant Curvature}.
\newblock {\em Letters in Mathematical Physics}, 65:83--94, 2003.

\bibitem{iliev2018symmetry}
Plamen Iliev.
\newblock Symmetry algebra for the generic superintegrable system on the sphere.
\newblock {\em Journal of High Energy Physics}, 2018(2), 2018.

\bibitem{zhedanov1991hidden}
Alexei~S Zhedanov.
\newblock {“Hidden symmetry” of Askey-Wilson polynomials}.
\newblock {\em Theoretical and Mathematical Physics}, 89(2):1146--1157, 1991.

\bibitem{genest2014superintegrability}
Vincent~X Genest, Luc Vinet, and Alexei Zhedanov.
\newblock {Superintegrability in two dimensions and the Racah--Wilson algebra}.
\newblock {\em Letters in Mathematical Physics}, 104:931--952, 2014.

\bibitem{genest2014racah}
Vincent~X Genest, Luc Vinet, and Alexei Zhedanov.
\newblock {The Racah algebra and superintegrable models}.
\newblock In {\em Journal of Physics: Conference Series}, volume 512, page 012011. IOP Publishing, 2014.

\bibitem{terwilliger2001two}
Paul Terwilliger.
\newblock Two linear transformations each tridiagonal with respect to an eigenbasis of the other.
\newblock {\em Linear algebra and its applications}, 330(1-3):149--203, 2001.

\bibitem{de2017higher}
Hendrik De~Bie, Vincent~X Genest, Wouter van~de Vijver, and Luc Vinet.
\newblock {A higher rank Racah algebra and the $\mathbb{Z}_{2}^{n} $ Laplace--Dunkl operator}.
\newblock {\em Journal of Physics A: Mathematical and Theoretical}, 51(2):025203, 2017.

\bibitem{crampe2023representations}
Nicolas Cramp{\'e}, Luc Frappat, and Eric Ragoucy.
\newblock {Representations of the rank two Racah algebra and orthogonal multivariate polynomials}.
\newblock {\em Linear Algebra and its Applications}, 664:165--215, 2023.

\bibitem{geronimo2010bispectrality}
Jeffrey~S Geronimo and Plamen Iliev.
\newblock {Bispectrality of multivariable Racah--Wilson polynomials}.
\newblock {\em Constructive Approximation}, 31(3):417--457, 2010.

\bibitem{dunkl1984orthogonal}
Charles~F Dunkl.
\newblock Orthogonal polynomials with symmetry of order three.
\newblock {\em Canadian Journal of Mathematics}, 36(4):685--717, 1984.

\bibitem{dunkl1981difference}
Charles~F Dunkl.
\newblock {A difference equation and Hahn polynomials in two variables}.
\newblock {\em Pacific Journal of Mathematics}, 92(1):57--71, 1981.

\bibitem{crampe2025two}
Nicolas Crampé, Satoshi Tsujimoto, Luc Vinet, and Alexei Zhedanov.
\newblock {The rank two Jacobi algebra}.
\newblock ArXiv:2507.07766.

\bibitem{genest2016tridiagonalization}
Vincent Genest, Mourad Ismail, Luc Vinet, and Alexei Zhedanov.
\newblock {Tridiagonalization of the hypergeometric operator and the Racah--Wilson algebra}.
\newblock {\em Proceedings of the American Mathematical Society}, 144(10):4441--4454, 2016.

\bibitem{frappat2019higgs}
Luc Frappat, Julien Gaboriaud, Luc Vinet, St{\'e}phane Vinet, and Alexei Zhedanov.
\newblock {The Higgs and Hahn algebras from a Howe duality perspective}.
\newblock {\em Physics Letters A}, 383(14):1531--1535, 2019.

\bibitem{granovskii1992mutual}
Yakov~I Granovskii, Igor~M Lutzenko, and Alexei~S Zhedanov.
\newblock Mutual integrability, quadratic algebras, and dynamical symmetry.
\newblock {\em Annals of Physics}, 217(1):1--20, 1992.

\bibitem{bateman1953higherI}
Arthur Erd{\'e}lyi.
\newblock {\em Higher Transcendental Functions, volume I, Based in Part on Notes Left by Harry Bateman and Compiled by the Staff of the Bateman Manuscript Project}.
\newblock McGraw-Hill, 1953.

\bibitem{huang2020finite}
Hau-Wen Huang and Sarah Bockting-Conrad.
\newblock {Finite-dimensional irreducible modules of the Racah algebra at characteristic zero}.
\newblock {\em SIGMA. Symmetry, Integrability and Geometry: Methods and Applications}, 16:018, 2020.

\bibitem{koekoek2010hypergeometric}
Roelof Koekoek, Peter~A Lesky, and Ren{\'e}~F Swarttouw.
\newblock {\em Hypergeometric orthogonal polynomials}.
\newblock Springer, 2010.

\bibitem{labriet2024realisations}
Quentin Labriet and Loïc Poulain~d'Andecy.
\newblock {Realisations of Racah algebras using Jacobi operators and convolution identities}.
\newblock {\em Advances in Applied Mathematics}, 153:102620, 2024.

\bibitem{biedenharn1952identity}
Lawrence~C Biedenharn.
\newblock {An identity satisfied by the Racah coefficients}.
\newblock {\em Journal of Mathematics and Physics}, 31(1-4):287--293, 1952.

\bibitem{elliott1953theoretical}
James~Philip Elliott.
\newblock Theoretical studies in nuclear structure v. the matrix elements of non-central forces with an application to the 2 p-shell.
\newblock {\em Proceedings of the Royal Society of London. Series A. Mathematical and Physical Sciences}, 218(1134):345--370, 1953.

\bibitem{terwilliger2003introduction}
Paul Terwilliger.
\newblock {Introduction to Leonard pairs}.
\newblock {\em Journal of Computational and Applied mathematics}, 153(1-2):463--475, 2003.

\bibitem{crampe2025factorized}
Nicolas Cramp{\'e} and Meri Zaimi.
\newblock {Factorized $A_2$-Leonard pair}.
\newblock {\em The Ramanujan Journal}, 66(2):25, 2025.

\bibitem{moore1989classical}
Gregory Moore and Nathan Seiberg.
\newblock Classical and quantum conformal field theory.
\newblock {\em Communications in Mathematical Physics}, 123(2):177--254, 1989.

\bibitem{reshetikhin1991invariants}
Nicolai Reshetikhin and Vladimir~G Turaev.
\newblock Invariants of 3-manifolds via link polynomials and quantum groups.
\newblock {\em Inventiones mathematicae}, 103(1):547--597, 1991.

\bibitem{turaev2010quantum}
Vladimir~G Turaev.
\newblock {\em Quantum invariants of knots and 3-manifolds}.
\newblock de Gruyter, 2010.

\bibitem{lewanowicz2010two}
Stanis{\l}aw Lewanowicz and Pawe{\l} Wo{\'z}ny.
\newblock {Two-variable orthogonal polynomials of big q-Jacobi type}.
\newblock {\em Journal of computational and applied mathematics}, 233(6):1554--1561, 2010.

\bibitem{genest2015two}
Vincent~X Genest, Jean-Michel Lemay, Luc Vinet, and Alexei Zhedanov.
\newblock {Two-variable $-1$ Jacobi polynomials}.
\newblock {\em Integral Transforms and Special Functions}, 26(6):411--425, 2015.

\bibitem{crampe2021askey}
Nicolas Cramp{\'e}, Luc Frappat, Julien Gaboriaud, Lo{\"\i}c~Poulain d’Andecy, Eric Ragoucy, and Luc Vinet.
\newblock {The Askey--Wilson algebra and its avatars}.
\newblock {\em Journal of Physics A: Mathematical and Theoretical}, 54(6):063001, 2021.

\bibitem{tsujimoto2012dunkl}
Satoshi Tsujimoto, Luc Vinet, and Alexei Zhedanov.
\newblock {Dunkl shift operators and Bannai--Ito polynomials}.
\newblock {\em Advances in Mathematics}, 229(4):2123--2158, 2012.

\bibitem{de2015bannai}
Hendrik De~Bie, Vincent~X Genest, Satoshi Tsujimoto, Luc Vinet, and Alexei Zhedanov.
\newblock {The Bannai-Ito algebra and some applications}.
\newblock In {\em Journal of Physics: Conference Series}, volume 597, page 012001. IOP publishing, 2015.

\bibitem{rühl}
Wener Rühl and Alexander Turbiner.
\newblock {Exact solvabilityof the Calogero and Sutherland models}.
\newblock {\em Modern Physics Letters A}, 10(29):2213--2221, 19955.

\bibitem{crampe2021racah}
Nicolas Cramp\'e, Julien Gaboriaud, Lo{\"\i}c~Poulain d'Andecy, and Luc Vinet.
\newblock {Racah algebras, the centralizer $ Z_n(\mathfrak{sl}_2)$ and its Hilbert-Poincar\'e series}.
\newblock {\em Annales Henri Poincaré}, 23:2657--2682, 2022.

\bibitem{post2024racah}
Sarah Post and S{\'e}bastien Bertrand.
\newblock {The Racah algebra of rank 2: Properties, symmetries and representation}.
\newblock {\em SIGMA. Symmetry, Integrability and Geometry: Methods and Applications}, 20:085, 2024.

\end{thebibliography}


\begin{thebibliography}}
\newcommand{\eb}{\end{thebibliography}

\end{document}